\documentclass[12pt]{article}

\usepackage{amsmath}             % 鏁板 鍏 紡
\usepackage{amssymb}             % 鏁板 鍏 紡
\usepackage{amsfonts}            % 鏁板 瀛椾綋
\usepackage{mathrsfs}            % 鏁板 鑺变綋
\usepackage{amsthm}
\usepackage{mathtools}
\usepackage{commath}
\usepackage{bm}
\usepackage{graphicx}
\usepackage{geometry}
\geometry{a4paper,scale=0.88}
\usepackage{float}
\usepackage{listings}
\usepackage{subfigure}
\usepackage{multirow}
\usepackage{diagbox}
\usepackage[export]{adjustbox}
\usepackage[all]{xy}
\usepackage{tikz-cd}
\usepackage{fancyhdr}
\usepackage{hyperref}
\usepackage{appendix}

\DeclareMathAlphabet{\mathbbb}{U}{bbold}{m}{n}

\theoremstyle{definition}
\newtheorem{defi}{Definition}[section]

\theoremstyle{plain}
\newtheorem{theo}[defi]{Theorem}
\newtheorem{question}{Question}[section]
\newtheorem{lemma}[defi]{Lemma}
\newtheorem{prop}[defi]{Proposition}
\newtheorem{cor}[defi]{Corollary}

\newtheorem*{conj}{Conjecture}

\theoremstyle{remark}
\newtheorem*{rmk}{Remark}

\begin{document}

\title{A Conjecture of Bhatt--Lurie and weakly $p$-nilpotent Hodge--Tate stacks}
\author{Jiahong Yu
\thanks{J.Y., yu\_jh@stu.pku.edu.cn, Beijing International Center for Mathematical Research, Peking University, YiHeYuan Road 5, Beijing, 100190, China.}}

\def\Z{\mathbb{Z}}
\def\N{\mathbb{N}}
\def\Q{\mathbb{Q}}
\def\Prism{\mathbbb{\Delta}}
\def\sq{\square}
\def\a{\mathfrak{a}}
\def\Ord{\mathbf{\Delta}}
\def\ep{\epsilon}
\def\C{\mathscr{C}}
\def\inte{\mathcal{O}}
\def\Spec{\mathrm{Spec}}
\def\Spf{\mathrm{Spf}}
\def\X{\mathcal{X}}
\def\A{\mathcal{A}}
\def\G{\mathcal{G}}
\def\fX{\mathfrak{X}}
\def\fZ{\mathfrak{Z}}
\def\Isom{\underline{\mathbf{Isom}}}
\def\wc{\mathbf{WCart}}
\def\W{\mathrm{W}}
\def\perf{\mathrm{perf}}
\def\Fil{\mathrm{Fil}}
\def\m{\mathfrak{m}}
\def\hol{\mathrm{H}}
\def\Rep{{\mathbf{Rep}}}
\def\Vect{{\mathbf{Vect}}}

\newcommand{\calA}{\mathcal{A}}
\newcommand{\calB}{\mathcal{B}}
\newcommand{\calC}{\mathcal{C}}
\newcommand{\calD}{\mathcal{D}}
\newcommand{\calE}{\mathcal{E}}
\newcommand{\calF}{\mathcal{F}}
\newcommand{\calG}{\mathcal{G}}
\newcommand{\calH}{\mathcal{H}}
\newcommand{\calI}{\mathcal{I}}
\newcommand{\calJ}{\mathcal{J}}
\newcommand{\calK}{\mathcal{K}}
\newcommand{\calL}{\mathcal{L}}
\newcommand{\calM}{\mathcal{M}}
\newcommand{\calN}{\mathcal{N}}
\newcommand{\calO}{\mathcal{O}}
\newcommand{\calP}{\mathcal{P}}
\newcommand{\calQ}{\mathcal{Q}}
\newcommand{\calR}{\mathcal{R}}
\newcommand{\calS}{\mathcal{S}}
\newcommand{\calT}{\mathcal{T}}
\newcommand{\calU}{\mathcal{U}}
\newcommand{\calV}{\mathcal{V}}
\newcommand{\calW}{\mathcal{W}}
\newcommand{\calX}{\mathcal{X}}
\newcommand{\calY}{\mathcal{Y}}
\newcommand{\calZ}{\mathcal{Z}}
\newcommand{\mathds}[1]{\mathbb{#1}}
\newcommand{\HT}{\mathrm{HT}}
\newcommand{\Zar}{\mathrm{Zar}}
\newcommand{\cou}{\mathrm{cou}}

\renewcommand{\pd}{\mathrm{pd}}
\newcommand{\nil}{\mathrm{nil}}
\newcommand{\wnil}{\mathrm{wnil}}

\pagestyle{fancy}
\fancyhf{}
\fancyhead[R]{\thepage}
\renewcommand{\headrulewidth}{0pt}
\maketitle

\abstract{Let $k$ be a perfect field of characteristic $p$, and let $X/k$ be a smooth variety. It is known that given a Frobenius lifting of $X$, we can identify prismatic crystals and nilpotent Higgs bundles, known as a positive characteristic version of the Simpson correspondence of $X$. However, Ogus--Vologodsky point out in their original paper of non-abelian Hodge theory in characteristic $p$ that, if we are just given a smooth lifting over $\W_2(k)$, there is a non-abelian Hodge theory on $p$-nilpotent Higgs bundles. Hence, it is natural to ask that whether there exists a subcategory of Hodge--Tate crystals on $X$, which can be described as $p$-nilpotent Higgs bundles. In this paper, we construct an analogue of the Hodge--Tate stack, so called the weakly $p$-nilpotent Hodge--Tate stack, on which the vector bundles are identified with certain Hodge--Tate crystals on $X$ that can be locally described by weakly $p$-nilpotent Higgs bundles. Furthermore, we prove that the weakly $p$-nilpotent Hodge--Tate stack is indeed a gerbe banded by $T_{X/k}\otimes{\alpha_p}$, and the obstruction class coincides with the obstruction of the existence of a Frobenius lifting of $X$, which is a conjecture of Bhatt and Lurie.}

\tableofcontents

\section{Introduction}

\subsection{Background}

The prismatic theory was introduced by Bhatt and Scholze (\cite{Bhatt_2022}) and developed by Bhatt and Lurie very recently (\cite{bhatt2022absolute}, \cite{bhatt2022prismatization}). It plays an important role in the recent study of $p$-adic Simpson theory and Sen theory (cf.\cite{bhatt2022absolute}, \cite{bhatt2022prismatization}, \cite{anschütz2023hodgetatestacksnonabelianpadic},\cite{anschütz2023smallpadicsimpsoncorrespondence},\newline\cite{min2024padicsimpsoncorrepondenceprismatic},\cite{Tian_2023},\cite{anschütz2022vvectorbundlespadicfields}, etc.). In this paper, we will focus the theory in the case of positive characteristic.

Let $k$ be a perfect field of characteristic $p$, $W=\mathrm{W}(k)$ be the Witt-vector of $k$, and $X/k$ be a smooth variety. Put $\mathrm{MIC}^{\mathrm{nil}}$ the category of integrable quasi--nilpotent connections and $\mathrm{Higgs}^{\mathrm{nil}}$ the category of nilpotent Higgs bundles. Recall that, given a flat lifting $\tilde{X}/(W/p^2)$ of $X$, Ogus and Vologodsky established in \cite{Ogus_2007} an equivalence between a full subcategory of $\mathrm{MIC}^{\mathrm{nil}}$ and a full subcategory of $\mathrm{Higgs}^{\mathrm{nil}}$. In addition, if we are given a lifting of the Frobenius of $X$ on $\tilde{X}$, there is an equivalence between $\mathrm{MIC}^{\mathrm{nil}}$ and $\mathrm{Higgs}^{\mathrm{nil}}$ (cf.\cite{lan2012inversecartiertransformexponential} \cite{Daxin_Xu_2019},\cite{shiho2012notesgeneralizationslocalogusvologodsky}).
In this case, this should be understood as the following commutative diagram:
\begin{equation}\label{eq:000}\xymatrix{\mathrm{Higgs}^{\mathrm{nil}}\ar@{-->}[d]\ar[rrr]^-{\text{Non--abelian Hodge}} &&&\mathrm{MIC}^{\mathrm{nil}}\ar[d]\\ \text{Hodge--Tate Crystals}\ar[rrr] &&&\text{Crystalline Crystals}}.\end{equation}
Here, all solid arrows are equivalence of categories, in which the bottom arrow is provided in \cite{Bhatt_2023} (when the underlying prism is perfect) or \cite{yu2024prismaticcrystalsschemescharacteristic} (for general crystalline prism). The top solid arrow exists when we are given a Frobenius lifting on $\tilde{X}$. The left dashed arrow is the unique arrow to make the diagram commutes. When we are given a $p$-complete and $p$-completely smooth lifting $\fX/W$ of $X$ together with a Frobenius lifting (not just on $W/p^2$), a direct construction of this functor is provided in \cite{Tian_2023} (for affine $X$) and \cite{ogus2024crystallineprismsreflectionsdiffractions} (for general $X$). If we merely have a smooth lifting $\tilde{X}/(W/p^2)$ as well as a Frobenius lifting, the direct construction is obtained in \cite{wang2024prismaticcrystalssmoothschemes}.

However, in the original paper \cite{Ogus_2007}, the authors only considered the $p$-nilpotent Higgs fields and the non-abelian Hodge theory may exist for $p$-nilpotent Higgs bundles when $X$ does not have a Frobenius lifting. Hence, it is natural to ask the following question.

\begin{question}\label{question}
    Can we define a full subcategory of Hodge--Tate crystals which are locally described as a $p$-nilpotent Higgs bundles?
\end{question}

In this paper, we partially answer the above question. Indeed, we can define a weaker condition, called the weakly $p$-nilpotency of a Higgs bundle. Such Higgs bundles can be identified with vector bundles on a stack. Moreover, we find that this stack is already mentioned in a conjecture of Bhatt and Lurie.

In the following, we define a \emph{Frobenius lifting} of $X$ as a pair $(\widetilde{X},\phi)$ where $\widetilde{X}$ is a smooth lifting of $X$ over $W_2$ and $\phi:\widetilde{X}\to \widetilde{X}$ is a lifting of the Frobenius endomorphism of $X$, which is compatible with the canonical Frobenius endomorphism of $W_2$. An isomorphism between two Frobenius liftings is an isomorphism between $W_2$-schemes which is compatible with $\phi$.

\subsection{The stacky approach, Hodge--Tate grebes and a conjecture of Bhatt--Lurie}

Let $X_{\Prism}$ be the relative prismatic site $(X/W)_{\Prism}$. Denote by $T_{X/k}$ the tangent bundle of $X$. In \cite{bhatt2022absolute} and \cite{bhatt2022prismatization}, Bhatt and Lurie constructed a stack $X^{\mathrm{HT}}$, so-called the Hodge-Tate gerbe of $X$, on which vector bundles are equivalent to the Hodge-Tate crystals on $X_{\Prism}$. This gerbe is banded by an fpqc-group $T_{X/k}^{\sharp}$. We refer \cite{bhatt2022absolute} and \cite{bhatt2022prismatization} for explicit definitions.

The nilpotent Higgs bundles are identified with vector bundles over $\mathbf{B}T_{X/k}^\sharp$. Consequently, a Simpson correspondence of $X$ is a equivalence between the category of vector bundles on $X^{\mathrm{HT}}$ and the category of vector bundles on $\mathbf{B}T^\sharp_{X/k}$, which is compatible with tensor products and sends $\mathbb{D}(\calV)$ to the zero Higgs bundle associated to $\calV$ for any vector bundle $\calV$ on $X$.

Actually, we expect that this equivalence comes from a trivialization (as a $T^\sharp_{X/k}$-gerbe) of $X^{\mathrm{HT}}$. To support this, we need to understand the relationship between Frobenius lifting and trivialization of $X^{\mathrm{HT}}$. Indeed, this was described as the following conjecture by Bhatt and Lurie (cf.\cite{bhatt2022prismatization}, Conjecture 5.14 (a)).

\begin{conj}[\cite{bhatt2022prismatization}]
    Let $o_{F,\mathrm{fpqc}}\in \hol^2(X_{\mathrm{fpqc}},\alpha_p\otimes T_X)$ be the obstruction of the existence of a Frobenius lifting. Let $o_\HT\in \hol^2(X_{\mathrm{fpqc}},T^\sharp_{X/k})$ be the cohomology class of $X^{\mathrm{HT}}$. Then the image of $o_\HT$ in $\hol^2(X_{\mathrm{fpqc}},\alpha_p\otimes T_X)$, under the map induced by $\mathds{G}_a^\sharp\to \alpha_p$ tensoring with $T_{X/k}$, is equal to $o_{F,\mathrm{fpqc}}$.
\end{conj}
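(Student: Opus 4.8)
The plan is to deduce the conjecture from an equivalence of gerbes, exploiting the rigidity of morphisms between gerbes sharing a common abelian band. Let $\calF_X$ be the fpqc stack on $X$ whose sections over a smooth affine $U\to X$ form the groupoid of Frobenius liftings of $U$, extended to $X_{\mathrm{fpqc}}$ by descent; it is locally nonempty because a smooth affine $k$-algebra admits a Frobenius lifting over $W_2$, it is a gerbe banded by $\alpha_p\otimes T_{X/k}$, and $o_{F,\mathrm{fpqc}}=[\calF_X]$ is its class (this is the sense of the left-hand side of the conjecture, as made precise in \cite{bhatt2022prismatization}). On the other side, write $\pi\colon T^\sharp_{X/k}\to\alpha_p\otimes T_{X/k}$ for the map induced by $\mathds{G}_a^\sharp\to\alpha_p$ tensored with $T_{X/k}$, as in the statement; let $\calY_X$ be the gerbe obtained from $X^{\HT}$ by extension of the band along $\pi$, and let $q\colon X^{\HT}\to\calY_X$ be the canonical morphism of gerbes inducing $\pi$ on bands. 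Then $[\calY_X]=\pi_*(o_{\HT})$ is exactly the class occurring in the statement, so the conjecture becomes the equality $[\calF_X]=[\calY_X]$ in $\hol^2(X_{\mathrm{fpqc}},\alpha_p\otimes T_{X/k})$.

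The bridge between the two sides is the fact, recalled in the introduction, that a Frobenius lifting $(\widetilde U,\phi)$ of $U\to X$ canonically determines a point of $U^{\HT}$ — equivalently, it trivialises the Hodge--Tate gerbe locally — which underlies the direct constructions of the characteristic-$p$ Simpson functor in \cite{Tian_2023}, \cite{ogus2024crystallineprismsreflectionsdiffractions} and \cite{wang2024prismaticcrystalssmoothschemes}. This assignment is functorial in $U$ and sends isomorphisms of Frobenius liftings to isomorphisms of points, hence defines a morphism of $X$-stacks $\rho\colon\calF_X\to X^{\HT}$; set $\bar\rho:=q\circ\rho\colon\calF_X\to\calY_X$, a morphism of $X$-stacks between two gerbes each banded by $\alpha_p\otimes T_{X/k}$. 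Such a morphism induces some endomorphism $\beta$ of the band on automorphism sheaves, and if $\beta=\mathrm{id}$ then $\bar\rho$ is an equivalence of $\alpha_p\otimes T_{X/k}$-gerbes — this can be checked fpqc-locally, where both gerbes become $\mathbf{B}(\alpha_p\otimes T_{X/k})$ and $\bar\rho$ becomes $\mathbf{B}(\mathrm{id})$ — so that $[\calF_X]=[\calY_X]$. Everything therefore reduces to showing that $\bar\rho$ acts as the identity on the band, a statement about automorphism sheaves, hence Zariski-local on $X$. (Unwinding this reduction, one is simply matching, on a common affine cover on which Frobenius liftings exist, the \v{C}ech $2$-cocycle of $X^{\HT}$ valued in $T^\sharp_{X/k}$ with the \v{C}ech $2$-cocycle of $\calF_X$ valued in $\alpha_p\otimes T_{X/k}$, via $\pi$.)

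For the remaining local statement I would fix an affine open $U=\Spec B\subseteq X$ with $T_{U/k}$ free and a Frobenius lifting $(\widetilde U,\phi)$ of $U$, over which all higher cohomology of $T^\sharp_{U/k}$ and of $\alpha_p\otimes T_{U/k}$ vanishes ($\mathds{G}_a^\sharp$ being an inverse limit of finite flat group schemes that are iterated extensions of copies of $\mathds{G}_a$, with Mittag--Leffler transition maps, so that no $\varprojlim^1$ term survives on an affine). The automorphism sheaf of the point $\rho(\widetilde U,\phi)\in U^{\HT}(U)$ is the band $T^\sharp_{U/k}\cong\mathds{G}_a^\sharp\otimes T_{U/k}$, while that of $(\widetilde U,\phi)$ in $\calF_X$ is $\alpha_p\otimes T_{U/k}$: an automorphism of the pair is an infinitesimal deformation $\mathrm{id}_{\widetilde U}+\widetilde t$ of $\widetilde U$ constrained by compatibility with $\phi$ in the Frobenius-twisted sense — here one keeps track of the identification $U^{(1)}\cong U$, available since $k$ is perfect — and this constraint is exactly what carves $\alpha_p\otimes T_{U/k}$ out of $\mathds{G}_a\otimes T_{U/k}$. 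What remains, and what I expect to be the main obstacle, is to identify the morphism of sheaves $\rho_*\colon\alpha_p\otimes T_{U/k}\to\mathds{G}_a^\sharp\otimes T_{U/k}$ induced by $\rho$ on automorphisms and to check $\pi\circ\rho_*=\mathrm{id}$; the expectation is that $\rho_*$ is $T_{U/k}$ tensored with the canonical splitting $\alpha_p\hookrightarrow\mathds{G}_a^\sharp$ of $\mathds{G}_a^\sharp\twoheadrightarrow\alpha_p$ afforded by the divided-power structure (on coordinate rings, $k[t]/t^p\hookrightarrow\Gamma_k[t]$, $t\mapsto t$), which gives $\pi\circ\rho_*=\mathrm{id}$. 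Pinning this down forces one to open up the construction of the Hodge--Tate point attached to $(\widetilde U,\phi)$ — through the \v{C}ech--Alexander complex of $(U/W)_{\Prism}$ built from the $\delta$-structure furnished by $\phi$, or through the diffracted Hodge complex and its Sen operator — and to trace an infinitesimal automorphism of $(\widetilde U,\phi)$ through it, retaining only the first divided-power graded piece; this is the computational heart of the argument. Once it is in place, $\bar\rho$ is an equivalence of $\alpha_p\otimes T_{X/k}$-gerbes inducing the identity on bands, whence $o_{F,\mathrm{fpqc}}=[\calF_X]=[\calY_X]=\pi_*(o_{\HT})$, which is the assertion. (If the weakly $p$-nilpotent Hodge--Tate stack $X^{\HT,\wnil}$ constructed later in the paper is shown to coincide with $\calY_X$ and to have class $o_{F,\mathrm{fpqc}}$, the conjecture follows at once; the route above is the self-contained one.)
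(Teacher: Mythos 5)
Your strategy---produce a morphism of $\alpha_p\otimes T_{X/k}$-gerbes from the stack of Frobenius liftings to the push-forward of $X^{\mathrm{HT}}$ and check that it induces the identity on bands---would prove the statement if the morphism existed, but the key arrow $\rho\colon\calF_X\to X^{\mathrm{HT}}$ does not exist as you describe it, and this is a genuine gap rather than a detail to be filled in. A point of $X^{\mathrm{HT}}$ over $\Spec(B)\to X$ is a factorization $\Spec(B)\to\Spec(\W(B)/p)\to X$, i.e.\ a splitting over $X$ of the square-zero extension $\W(B)/p\to B$, which involves \emph{all} Witt components; a Frobenius lifting over $\W_2(k)$ only produces a splitting of the truncated extension $\W_2(B)/p\to B$. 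To obtain a point of $X^{\mathrm{HT}}$ one needs a $\delta$-lifting over the full ring $\W(k)$ (this is what \cite[Theorem 5.12]{bhatt2022prismatization}, and Lemma \ref{explicit} here, actually use). The canonical comparison goes the opposite way: trivializations of $X^{\mathrm{HT}}$ map to $W_2$-Frobenius liftings by truncation (Theorem \ref{theo: section X^HT to frob liftting}), and the reason the conjecture pushes forward along $\mathds{G}_a^\sharp\to\alpha_p$---whose effect on $R^1\nu_*$ is exactly the projection $\W(\calO_X)/F(\W(\calO_X))\to F_*\calO_X/\calO_X$---is that only the pushed-forward gerbe is controlled by $W_2$-data. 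If you replace $\calF_X$ by the stack of Frobenius liftings over $\W(k)$, a morphism to $X^{\mathrm{HT}}$ becomes plausible, but the band is then $T^\sharp_{X/k}$ rather than $\alpha_p\otimes T_{X/k}$, and you must still descend to the $W_2$-level and compare with $o_{F,\mathrm{fpqc}}$; at that point you have essentially reconstructed the paper's argument.

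Even granting $\rho$, the proposal is incomplete by your own account: the identification of the induced map on bands (your ``computational heart'') is left as an expectation, and the equality $[\calF_X]=o_{F,\mathrm{fpqc}}$ is asserted without proof, whereas in this paper $o_{F,\mathrm{fpqc}}$ is \emph{defined} as the image of the Zariski torsor class of Theorem \ref{theo:obstruction of lifting frob} under the Grothendieck spectral sequence, so this compatibility also needs an argument. The paper avoids constructing any morphism of stacks: it realizes both sides as images of Zariski $\hol^1$-classes---$\widetilde o_F$ for the splitting of $\W(\calO_X)/p\to\calO_X$ and $o_F$ for the splitting of $\W_2(\calO_X)/p\to\calO_X$---under compatible coboundary and edge maps (Lemma \ref{lemma: diagram change 2 to 1}, Corollary \ref{coboundspec}), identifies the image of $\widetilde o_F$ with $o_{\mathrm{HT}}$ by an explicit \v{C}ech computation with local Frobenius liftings over $\W(k)$ (Theorem \ref{theo: calculate class of HT}), identifies the image of $o_F$ with the Frobenius-lifting obstruction (Theorem \ref{theo: splitting W2=lifting}), and concludes from the commutativity of diagram (\ref{eq:basic obstruction sequence in H^1}).
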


We will prove this conjecture.

\begin{theo}\label{theo:main}
    The conjecture is true.
\end{theo}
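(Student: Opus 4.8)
The plan is to compute both obstruction classes as \v Cech $1$-cocycles with respect to one and the same Zariski cover equipped with Frobenius liftings, and to match them term by term after applying $\beta\otimes\mathrm{id}$, where $\beta\colon\mathds{G}_a^\sharp\twoheadrightarrow\alpha_p$ is the canonical surjection. First I would choose a Zariski cover $\{U_i\}$ of $X$ by affines admitting \'etale coordinates over $\mathbb{A}^n_k$; each $U_i$ then carries a Frobenius lifting $\sigma_i=(\widetilde U_i,\phi_i)$, and by the positive-characteristic Simpson correspondence for Frobenius-lifted smooth schemes (cf.\ the works cited in the introduction) such a $\sigma_i$ trivializes the Hodge--Tate gerbe over $U_i$, i.e.\ determines an object $x_i$ of $X^{\HT}(U_i)$ with $X^{\HT}|_{U_i}\simeq U_i\times\mathbf{B}T^\sharp_{X/k}$. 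Thus $X^{\HT}$ is already Zariski-locally trivial; since the fpqc-sheaf $T^\sharp_{X/k}$ has no nonzero sections over any reduced scheme, the $q=0$ row of the local-to-global spectral sequence $\hol^p_{\Zar}\bigl(X,\mathcal H^q_{\mathrm{fpqc}}(T^\sharp_{X/k})\bigr)\Rightarrow\hol^{p+q}_{\mathrm{fpqc}}(X,T^\sharp_{X/k})$ vanishes, so $o_{\HT}$ is represented by the $1$-cocycle $\{[P_{ij}]\}$ with $P_{ij}:=\Isom_{X^{\HT}}(x_i|_{U_{ij}},x_j|_{U_{ij}})\in\hol^1_{\mathrm{fpqc}}(U_{ij},T^\sharp_{X/k})$. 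Entirely parallel, the fpqc-gerbe $\G_F$ of Frobenius liftings of $X$ is banded by $\alpha_p\otimes T_{X/k}$ and Zariski-locally trivial, so $o_{F,\mathrm{fpqc}}$ is represented by $\{[Q_{ij}]\}$ with $Q_{ij}:=\Isom(\sigma_i|_{U_{ij}},\sigma_j|_{U_{ij}})\in\hol^1_{\mathrm{fpqc}}(U_{ij},\alpha_p\otimes T_{X/k})$; and $\beta_*o_{\HT}$ is by definition the class of the weakly $p$-nilpotent Hodge--Tate stack $\beta_*X^{\HT}$. Hence it suffices to prove $\beta_*[P_{ij}]=[Q_{ij}]$ for all $i,j$. (If $X$ does not lift to $W_2$ globally one carries in addition the gluing data of local $W_2$-liftings; it enters both sides identically, through $T^\sharp_{X/k}\to T_{X/k}$ resp.\ $\alpha_p\otimes T_{X/k}\hookrightarrow T_{X/k}$, and requires no new idea.)

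The key point is that $P_{ij}$ and $Q_{ij}$ depend only on the difference $h_{ij}\in\Gamma(U_{ij},T_{X/k}\otimes_{\mathcal O_X}F_*\mathcal O_X)$ of the two Frobenius liftings, and do so through parallel connecting homomorphisms. On the Frobenius-lifting side this is classical deformation theory: Frobenius liftings over an affine form a torsor under $\Gamma(T_{X/k}\otimes F_*\mathcal O_X)$, while the automorphism group scheme of one of them is $\ker\!\bigl(T_{X/k}\xrightarrow{\,D\mapsto D^{\langle p\rangle}\,}T_{X/k}\otimes F_*\mathcal O_X\bigr)$, which, since $D\mapsto D^{\langle p\rangle}$ is Frobenius-semilinear, is $\alpha_p\otimes T_{X/k}$; hence $[Q_{ij}]=\partial(h_{ij})$, where $\partial$ is the connecting map of $0\to\alpha_p\otimes T_{X/k}\to T_{X/k}\to\mathcal Q\to 0$ with $\mathcal Q:=\operatorname{im}(D\mapsto D^{\langle p\rangle})$, applied to $h_{ij}$ (which lies in $\Gamma(U_{ij},\mathcal Q)$ because fpqc-locally it is a $p$-th power). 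On the Hodge--Tate side one must unwind Bhatt--Lurie's construction of $X^{\HT}$ together with the explicit prismatic-crystal/Higgs dictionary of the cited works to show that $x_\sigma$ is ``affine in $\sigma$'': replacing $\sigma_i$ by $\sigma_j$ changes the trivializing object by the $T^\sharp_{X/k}$-torsor $P_{ij}$, and $[P_{ij}]=\partial^\sharp(h_{ij})$ for the connecting map $\partial^\sharp$ of a short exact sequence $0\to\mathds{G}_a^\sharp\otimes T_{X/k}\to E\to\mathcal Q\to 0$, with $E$ the group scheme of ``Frobenius-lift data equipped with a compatible Hodge--Tate gauge''. Establishing this sequence and the group scheme $E$ --- equivalently, pinning down precisely how a Frobenius lifting produces the trivialization of $X^{\HT}$, and how two such trivializations differ --- is the heart of the argument and the step I expect to be the main obstacle, since it requires genuinely opening up the construction of the Hodge--Tate stack rather than using it as a black box.

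Granting this, the conclusion is a diagram chase. The two short exact sequences fit into a morphism which is the identity on the common quotient $\mathcal Q$ and is $\beta\otimes\mathrm{id}_{T_{X/k}}$ on the subsheaves; this compatibility is exactly the assertion that $\beta_*X^{\HT}$ is the pushforward of $X^{\HT}$ along $\beta\otimes\mathrm{id}$, and it can be read off the construction of $E$ together with the iterated-extension structure of $\mathds{G}_a^\sharp$ relative to $\beta$. Naturality of connecting homomorphisms then gives $\beta_*\circ\partial^\sharp=\partial$, so $\beta_*[P_{ij}]=\beta_*\partial^\sharp(h_{ij})=\partial(h_{ij})=[Q_{ij}]$. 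The $1$-cocycle identities on triple overlaps hold automatically on both sides (they are forced by the gerbe structures), and the $W_2$-gluing contributions match for the same reason; hence $\beta_*o_{\HT}=o_{F,\mathrm{fpqc}}$ in $\hol^2(X_{\mathrm{fpqc}},\alpha_p\otimes T_{X/k})$, which is the Conjecture.
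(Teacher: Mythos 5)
Your overall strategy is the same as the paper's: kill the $q=0$ row of the local-to-global (Zariski-to-fpqc) spectral sequence using $\nu_*T^\sharp_{X/k}=0$ on reduced schemes, represent both $o_{\HT}$ and $o_{F,\mathrm{fpqc}}$ by Zariski $1$-cocycles valued in $R^1\nu_*T^\sharp_{X/k}$ resp.\ $R^1\nu_*(\alpha_p\otimes T_{X/k})$ attached to local Frobenius liftings, and match them via naturality under $\beta\colon\mathds{G}_a^\sharp\to\alpha_p$. However, you explicitly defer the central step --- pinning down how a Frobenius lifting trivializes $X^{\HT}$ and how two such trivializations differ --- to an unconstructed extension $0\to\mathds{G}_a^\sharp\otimes T_{X/k}\to E\to\calQ\to 0$, and you acknowledge this is the main obstacle. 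That step is precisely the content the paper supplies (via Bhatt--Lurie's Theorem 5.12 and the explicit Witt-vector computation in Lemmas \ref{explicit}--\ref{lemma: V^{-1}(iota-iota)} and Theorem \ref{theo: calculate class of HT}), so as it stands the proposal is a correct outline with the heart of the proof missing.

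Moreover, the mechanism you propose for filling the gap has a genuine problem. The trivializations of $X^{\HT}$ as a $T^\sharp_{X/k}$-gerbe are produced by $\delta$-liftings $(A_\alpha,\phi_\alpha)$ over the full Witt ring $\W(k)$, not by $\W_2$-Frobenius liftings, and the isomorphism class of the torsor $P_{ij}=\Isom(x_i,x_j)$ is the class $V^{-1}(\iota_\beta-\iota_\alpha)$ in $T_{X/k}\otimes\W(\calO)/F(\W(\calO))\cong R^1\nu_*T^\sharp_{X/k}$; this is \emph{not} determined by the mod-$p^2$ difference $h_{ij}\in\Gamma(U_{ij},T_{X/k}\otimes F_*\calO_X)$ of the truncated liftings, so your claimed sequence with quotient $\calQ\subset T_{X/k}\otimes F_*\calO_X$ cannot compute $[P_{ij}]$ itself. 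Only after pushing forward along $\beta$ does the class factor through the $\W_2$-data, via the surjection $\W(\calO_X)/F(\W(\calO_X))\to F_*\calO_X/\calO_X$ induced by $\W(\calO_X)/p\to\W_2(\calO_X)/p$; this is exactly the commutative ladder (\ref{eq:basic obstruction sequence in H^1}) the paper uses. Relatedly, you would still need to check that your gerbe-theoretic description of $o_{F,\mathrm{fpqc}}$ (via the fpqc automorphism group $\alpha_p\otimes T_{X/k}$ of a Frobenius lifting) agrees with the class defined in the conjecture, namely the image of the Zariski torsor class $o_F$ under the edge map of the spectral sequence; the paper sidesteps this by working with the Zariski torsor throughout.
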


Furthermore, we will prove the following.

\begin{theo}[Theorem \ref{theo: section X^HT to frob liftting}]
    There exists a canonical map from the set of equivalent classes of trivializations of $X^{\mathrm{HT}}$ and the set of isomorphic classes of Frobenius liftings of $X$.
\end{theo}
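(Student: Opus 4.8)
The plan is to pass, via the band homomorphism $T^\sharp_{X/k}\to\alpha_p\otimes T_{X/k}$ induced by $\mathds{G}_a^\sharp\to\alpha_p$, from the Hodge--Tate gerbe $X^{\HT}$ to the weakly $p$-nilpotent Hodge--Tate stack (written $X^{\HT}_{\wnil}$ below), and then to produce the map out of the latter. Since $X^{\HT}$ is a gerbe over $X$ banded by $T^\sharp_{X/k}$, pushing it out along the (fppf-surjective) band map $T^\sharp_{X/k}\to\alpha_p\otimes T_{X/k}$ produces a gerbe over $X$ banded by $\alpha_p\otimes T_{X/k}$, which we identify with $X^{\HT}_{\wnil}$ by its construction; in particular there is a canonical morphism $X^{\HT}\to X^{\HT}_{\wnil}$ of gerbes over $X$ covering the band map, and pushing a global section (equivalently, a trivialization) of $X^{\HT}$ forward along it yields a trivialization of $X^{\HT}_{\wnil}$. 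It therefore suffices to construct a canonical map from equivalence classes of trivializations of $X^{\HT}_{\wnil}$ to isomorphism classes of Frobenius liftings of $X$, and for this I would prove that $X^{\HT}_{\wnil}$ is canonically equivalent, as a gerbe over $X$ (in the fpqc topology), to the gerbe of Frobenius liftings of $X$; the desired map is then obtained by passing to $\pi_0$ of global sections over $X$.

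\textbf{Local models.} Choose an affine open cover $\{U_i=\Spec R_i\}$ of $X$ together with \'etale coordinate charts $k[t_1,\dots,t_n]\to R_i$. Each chart gives a canonical lift $\widetilde{U}_i$ of $U_i$ over $W_2$ (lift $k[\underline{t}]$ to $W_2[\underline{t}]$, then lift the \'etale algebra uniquely) and a canonical Frobenius lift $\phi_i$ (lift $t_j\mapsto t_j^p$, then use \'etaleness), hence a Frobenius lifting $(\widetilde{U}_i,\phi_i)$ of $U_i$ and, by the explicit local description of the Hodge--Tate stack (\cite{bhatt2022prismatization}), a canonical trivialization of $X^{\HT}_{\wnil}|_{U_i}$. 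The standard deformation-theoretic computation also shows that an automorphism of the Frobenius lifting $(\widetilde{U}_i,\phi_i)$ is the same as a section of $\alpha_p\otimes T_{X/k}$ over $U_i$, and that the obstruction to a Frobenius lifting existing is locally trivial on affines; hence over each $U_i$ the groupoid of Frobenius liftings is identified with the groupoid of sections of $X^{\HT}_{\wnil}|_{U_i}$.

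\textbf{Gluing.} The technical heart is to upgrade the local identifications to a \emph{natural} one: I must show that passing between two coordinate charts changes the canonical Frobenius lifting and the canonical trivialization by \emph{the same} datum, valued in $\alpha_p\otimes T_{X/k}$. Concretely, the change-of-chart $2$-cocycle of $X^{\HT}_{\wnil}$ must be matched, under $\mathds{G}_a^\sharp\to\alpha_p$, with the classical Deligne--Illusie cocycle governing the failure of the local Frobenius liftings $(\widetilde{U}_i,\phi_i)$ to agree on overlaps; this is essentially Theorem~\ref{theo:main}, refined from an equality of cohomology classes to a compatibility of cocycles (equivalently, of banded gerbes). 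Granting this, the equivalences of groupoids from the previous step are natural in the affine open, i.e.\ assemble to an equivalence of gerbes $X^{\HT}_{\wnil}\simeq\{\text{Frobenius liftings of }X\}$ over $X$; hence a trivialization $\tau$ of $X^{\HT}_{\wnil}$, restricted to $\{U_i\}$, produces compatible local Frobenius liftings $(\widetilde{U}_i',\phi_i')$ which glue by fpqc descent to a global Frobenius lifting $(\widetilde{X},\phi)$ of $X$. Its isomorphism class depends only on the equivalence class of $\tau$ and is independent of the cover and charts (pass to common refinements; the canonical comparisons are transitive), so we obtain the asserted canonical map; composing with the reduction $X^{\HT}\to X^{\HT}_{\wnil}$ of the first paragraph gives the map out of trivializations of $X^{\HT}$.

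\textbf{Main obstacle.} The whole argument rests on the cocycle identification in the previous step, which is also the crux of Theorem~\ref{theo:main}: one must unwind Bhatt--Lurie's explicit local formula for the Hodge--Tate gerbe --- built from the difference of two local Frobenius lifts divided by $p$ --- and check that its image under $\mathds{G}_a^\sharp\to\alpha_p$, i.e.\ its ``$p$-th divided power part'', is exactly the Deligne--Illusie class $p^{-1}d\phi$, while the large pro-infinitesimal kernel of $T^\sharp_{X/k}\to\alpha_p\otimes T_{X/k}$ contributes nothing. In practice I would carry out this computation once and deduce both Theorem~\ref{theo:main} and the present theorem from it; the remaining ingredients (local models, automorphism computations, fpqc descent) are routine deformation theory.
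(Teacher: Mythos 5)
Your route is genuinely different from the paper's, and as written it has a real gap at its central step. The paper's proof is a short direct argument: by the very definition of $X^{\HT}$ (classifying lifts of $\Spec(B)\to X$ to $\Spec(\W(B)/p)$), an equivalence class of trivializations of $X^{\HT}$ is the same as a splitting of the square-zero extension $\W(\calO_X)/p\to\calO_X$; Theorem \ref{theo: splitting W2=lifting} identifies isomorphism classes of Frobenius liftings with splittings of $\W_2(\calO_X)/p\to\calO_X$; and composing with the projection $\W(\calO_X)/p\to\W_2(\calO_X)/p$ gives the map. No passage through $X^{\HT}_{p-\wnil}$, no gerbe comparison, and no cocycle matching is needed.

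The gap in your argument is the asserted equivalence of $X^{\HT}_{p-\wnil}$ with a ``gerbe of Frobenius liftings.'' First, the stack of Frobenius liftings is not naturally an $\alpha_p\otimes T_{X/k}$-gerbe: by Lemma \ref{lemma:key lemma lift frob} its automorphism groups over reduced schemes are trivial, so it is a Zariski \emph{sheaf}, indeed a torsor under $T_{X/k}\otimes_{\calO_X}F_*\calO_X/\calO_X$ (Theorem \ref{theo:obstruction of lifting frob}); turning this $\hol^1(X_{\Zar},-)$-datum into an fpqc $\alpha_p\otimes T_{X/k}$-gerbe requires the Leray/Grothendieck spectral-sequence delooping, and you would have to define the fibered category of ``Frobenius liftings'' on non-reduced fpqc test objects and band it --- none of which is routine or carried out. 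Second, your gluing step is explicitly deferred (``Granting this\dots''): Theorem \ref{theo:main} is only an equality of classes in $\hol^2(X_{\mathrm{fpqc}},\alpha_p\otimes T_{X/k})$, which does not by itself yield an equivalence of banded gerbes or a map on trivializations; the cocycle-level refinement you invoke is precisely the hard content and is not supplied. Finally, even if completed, your construction only produces a map out of trivializations of $X^{\HT}_{p-\wnil}$, and you would still need to check it agrees with (or at least exists independently of) choices of covers and charts; the paper's Witt-vector argument avoids all of this.
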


We will always work on the $\mathrm{fpqc}$ topology on $X$ rather than $\mathbf{fppf}$ topology since the group $T^\sharp_{X/k}$ is \textbf{not} of finite type. This site is too big that for a general presheaf $\mathcal{F}$, it may not have a sheafification. In particular, it is hard to talk about the tensor product of two sheaves of modules.

To due these problems, we follow the same method by Peter Scholze in \cite{scholze2022etale}. Precisely, we fix a very strong cardinal $\kappa$ such that the cardinal of points in the underlying space of $X$ is strictly smaller than $\kappa$ and the cardinal of $k$ is strictly smaller than $\kappa$.

\begin{defi}
    A $k$-scheme $Y$ is called $\kappa$-small if the cardinal of points in the underlying space of $Y$ is strictly smaller than $\kappa$ and for any affine open subset $\Spec(R)\subset Y$, the cardinal of $R$ is smaller than $\kappa$.
\end{defi}

We will use $X_{\mathrm{fpqc}}$ to denote the site of all $\kappa$-small schemes over $X$ as well as the $\mathrm{fpqc}$-covering. This is a essentially small category, so the general sheafification exists. By Yoneda's lemma, we have the following.

\begin{lemma}
    The functor $Y\to \underline{\mathrm{Hom}}_X(-,Y)$ defines a fully faithful functor from the category of $\kappa$-small schemes over $X$ to the category of sheaves on $X_{\mathrm{fpqc}}$.
\end{lemma}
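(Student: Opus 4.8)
The plan is to split the statement into its purely formal content and its geometric content. The functor $Y\mapsto\underline{\mathrm{Hom}}_X(-,Y)$ factors as the ordinary Yoneda embedding of $\kappa$-small $X$-schemes into presheaves on $X_{\mathrm{fpqc}}$, followed by the inclusion of the category of fpqc sheaves into presheaves. Since the latter is a full subcategory, full faithfulness is immediate from the ordinary Yoneda lemma, and the only thing left to prove is that $\underline{\mathrm{Hom}}_X(-,Y)$ actually \emph{is} an fpqc sheaf for every $\kappa$-small $X$-scheme $Y$ --- i.e. that the fpqc topology on $X_{\mathrm{fpqc}}$ is subcanonical. I would prove this by the classical faithfully flat descent argument for morphisms into a scheme (Grothendieck, SGA~1, Exp.~VIII; see also the Stacks Project), taking care throughout to remain inside $X_{\mathrm{fpqc}}$.

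First I would reduce the sheaf axiom to a single faithfully flat map of affines. Representable presheaves are automatically Zariski sheaves, so it suffices to check the condition for a cover of an affine $\kappa$-small scheme $Z=\Spec A$; and any fpqc cover of $\Spec A$ is refined by a single faithfully flat quasi-compact morphism $\Spec B\to\Spec A$ with $B$ a finite product of $\kappa$-small rings (pick finitely many members of the cover and finitely many affine opens covering the relevant quasi-compact opens, then take the product of their coordinate rings). Because $\kappa$ is very strong, the rings $B$, $B\otimes_A B$ and $B\otimes_A B\otimes_A B$ are again $\kappa$-small, so all objects entering the sheaf axiom live in $X_{\mathrm{fpqc}}$. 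It then remains to show that for $A\to B$ faithfully flat and $Y$ any $\kappa$-small $A$-scheme the diagram
\[
\mathrm{Hom}_A(\Spec A, Y)\longrightarrow \mathrm{Hom}_A(\Spec B, Y)\rightrightarrows \mathrm{Hom}_A(\Spec B\otimes_A B, Y)
\]
is an equalizer.

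The equalizer claim is exactly faithfully flat descent for morphisms. Injectivity of the first arrow uses that $\Spec B\to\Spec A$ is surjective and $A\hookrightarrow B$, so a morphism out of $\Spec A$ is determined on points and on the structure sheaf by its restriction to $\Spec B$. For exactness, given $g\colon\Spec B\to Y$ with equal pullbacks to $\Spec B\otimes_A B$, I would choose an affine open cover $Y=\bigcup_j\Spec R_j$; the opens $g^{-1}(\Spec R_j)\subseteq\Spec B$ are stable under the two projections, and since a faithfully flat quasi-compact morphism is a submersion they descend to quasi-compact opens $W_j\subseteq\Spec A$ covering $\Spec A$; faithfully flat descent of quasi-coherent algebras (ultimately exactness of $A\to B\rightrightarrows B\otimes_A B$) turns $R_j\to\Gamma(g^{-1}(\Spec R_j),\inte)$ into a compatible map $R_j\to\Gamma(W_j,\inte_{W_j})$, hence a morphism $W_j\to\Spec R_j\hookrightarrow Y$ restricting to $g$; by the injectivity already noted these agree on overlaps and glue to the desired $\Spec A\to Y$.

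The main point to watch is set-theoretic rather than geometric, and it is the only genuinely delicate one: every scheme appearing in the argument must be kept $\kappa$-small, so that the descent really takes place within the essentially small category $X_{\mathrm{fpqc}}$. This is where the hypothesis that $\kappa$ is very strong is used --- finite products, tensor products over $\kappa$-small rings, quasi-compact open subschemes of $\kappa$-small schemes, and the fixed scheme $Z=\Spec A$ itself all have underlying sets and affine coordinate rings of cardinality $<\kappa$, and $Y$ together with its chosen affine cover was assumed $\kappa$-small. With this bookkeeping, the classical argument goes through verbatim, showing that $\underline{\mathrm{Hom}}_X(-,Y)$ is an fpqc sheaf and hence that $Y\mapsto\underline{\mathrm{Hom}}_X(-,Y)$ is fully faithful into sheaves on $X_{\mathrm{fpqc}}$.
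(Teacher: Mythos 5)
Your proof is correct and follows the same route the paper intends: the paper simply invokes Yoneda's lemma (leaving the subcanonicality of the fpqc topology implicit), while you supply the standard faithfully-flat descent argument for morphisms and the $\kappa$-smallness bookkeeping that makes it work inside $X_{\mathrm{fpqc}}$. No gaps.
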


We explain our proof of Theorem \ref{theo:main}. The point is to consider the following diagram
\begin{equation}\label{eq:basic obstruction sequence in H^1}
    \begin{tikzcd}
        0\ar{r} &\W(\calO_X)/F(\W(\calO_X))\ar{d} \ar{r}{V} & \W(\calO_X)/p\ar{r}\ar{d} & \calO_X\ar{d}\ar{r} &0\\
        0\ar{r} &F_*\calO_X/\calO_X\ar{r}{V} &\W_2(\calO_X)/p\ar{r}& \calO_X\ar{r} &0
    \end{tikzcd},
\end{equation}
in which $F$ (resp. $V$) is the Frobenius (resp. Verschiebung) and $\W$ (resp. $\W_2$) is the ring of Witt vectors (resp. truncated Witt vectors). All columns are natural projections. Moreover, we consider
\[\W(\calO_X)/F(\W(\calO_X))\]
as an $\calO_X$-module via the formula
\[a\cdot x:=[a^p]x\ \forall a\in \calO_X,x\in \W(\calO_X)/F(\W(\calO_X))\]
where $[a]$ is the Teichmuller lifting of $a$. Put $\widetilde o_{F}\in \hol^1\left(X_{\text{Zar}},T_{X}\otimes_{\calO_X}\W(\calO_X)/F\left(\W(\calO_X)\right)\right)$ as the obstruction of the existence of a split of the first line and $o_{F}\in \hol^1\left(X_{\text{Zar}},T_{X/k}\otimes_{\calO_X}F_*\calO_X/\calO_X\right)$ as the obstruction of the existence of a split of the second line. Obviously, $o_{\text{HT}}'$ projects to $o_{F}'$ under the natural projection.

Using the Grothendieck spectral sequence, we have a diagram
\[\begin{tikzcd}
    \hol^1\left(X_{\text{Zar}},T_{X/k}\otimes_{\calO_X}\W(\calO_X)/F\left(\W(\calO_X)\right)\right)\ar{d}\ar{r} &\hol^2\left(X_{\text{fpqc}},T_{X/k}^\sharp\right)\ar{d}\\
    \hol^1\left(X_{\text{Zar}},T_{X/k}\otimes_{\calO_X}F_*\calO_X/\calO_X\right)\ar{r} &\hol^2\left(X_{\text{fpqc}},T_{X/k}\otimes_{\calO_X}\alpha_p\right)
\end{tikzcd}.\]
We will prove that the image of $\widetilde{o}_F$ under the first line is equal to the cohomological class of $X^{\text{HT}}$ and the image of $o_F$ under the second line is equal to the obstruction class of the existence of Frobenius liftings. Theorem \ref{theo:main} follows from the commutativity of the above diagram.

\subsection{The weakly $p$-nilpotent Hodge--Tate stacks}

The projection of $o_{\HT}$ in $\hol^2(X_{\mathrm{fpqc}},T_{X/k}\otimes\alpha_p)$ can be described as a push-forward of $X^\HT$ along the homomorphism $T^\sharp_{X/k}\to T_{X/k}\otimes\alpha_p$. Hence, we can define a new stack $X^{\HT}_{p-\nil}$, in the following way.

\begin{defi}\label{defi: weakly p nil HT}
    Let $X^{\HT}_{p-\wnil}$ be the push--forward of $X^{\HT}$ along the homomorphism
    \[T^\sharp_{X/k}\to T_{X/k}\otimes\alpha_p.\]
\end{defi}

We find that this stack partially answers Question \ref{question}. Explicitly, we make the following definition.

\begin{defi}
    Let $(E,\theta)$ be a Higgs bundle on $X$. We call $(\calE,\theta)$ weakly $p$-nilpotent if for any open subset $U\subset X$ and any section $\partial\in T_{X/k}(U)$
    the action of $\partial^p$ on $\calE(U)$ is zero.
\end{defi}

\begin{theo}
    Assume $X$ has a $\delta$-lifting $X'$ over $\W(k)$. There exists an equivalence between the category of vector bundles on $X^\HT_{p-\wnil}$ and the category of weakly $p$-nilpotent Higgs bundles.
\end{theo}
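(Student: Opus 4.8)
The plan is to use the $\delta$-lifting to trivialize $X^{\HT}$ and then to read off $\Vect(X^{\HT}_{p-\wnil})$ from the resulting classifying stack. A $\delta$-lifting $X'$ of $X$ over $\W(k)$ is, in particular, a bounded prism $(X',(p))$ over the crystalline prism $(\W(k),(p))$ together with an identification $X'/p\cong X$; by the prismatization formalism of Bhatt--Lurie it therefore provides a point of the prismatization of $X$ lying over the Hodge--Tate divisor, hence a section $X=\Spec(X'/p)\to X^{\HT}$. Since $X^{\HT}$ is a gerbe on $X_{\mathrm{fpqc}}$ banded by the abelian group $T^\sharp_{X/k}$, any such section trivializes it: there is an equivalence $X^{\HT}\simeq\mathbf{B}_X T^\sharp_{X/k}$ over $X$ compatible with the banding, inducing $\Vect(X^{\HT})\simeq\Vect(\mathbf{B}_X T^\sharp_{X/k})\simeq\mathrm{Higgs}^{\nil}(X)$. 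Applying the push-forward construction of Definition \ref{defi: weakly p nil HT} to this equivalence --- and using that the push-forward of a trivial gerbe along a homomorphism of abelian bands is again trivial --- we obtain $X^{\HT}_{p-\wnil}\simeq\mathbf{B}_X(T_{X/k}\otimes\alpha_p)$, under which the canonical morphism $X^{\HT}\to X^{\HT}_{p-\wnil}$ becomes the map induced by $T^\sharp_{X/k}\to T_{X/k}\otimes\alpha_p$. In particular $\Vect(X^{\HT}_{p-\wnil})$ is identified with $\Rep(T_{X/k}\otimes\alpha_p)$, the category of $(T_{X/k}\otimes\alpha_p)$-equivariant vector bundles on $X$.

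It then remains to identify $\Rep(T_{X/k}\otimes\alpha_p)$ with weakly $p$-nilpotent Higgs bundles, which is a local computation followed by fpqc descent. Over an affine open $\Spec(R)\subset X$ on which $T_{X/k}$ is free with basis $\partial_1,\dots,\partial_n$ we have $T_{X/k}\otimes\alpha_p\cong\alpha_p^{\oplus n}$, with distribution algebra $R[\partial_1,\dots,\partial_n]/(\partial_1^{p},\dots,\partial_n^{p})$; hence a representation is the same datum as a vector bundle $\calE$ on $\Spec(R)$ equipped with pairwise commuting $\calO$-linear endomorphisms $\theta_{\partial_i}$ satisfying $\theta_{\partial_i}^{p}=0$ --- in other words, a Higgs bundle whose contraction with each $\partial_i$ has vanishing $p$-th power, the exponent meaning $p$-fold composition. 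Because the $\theta_{\partial_i}$ commute and are $\calO$-linear, $\theta_{\sum_i f_i\partial_i}^{p}=\sum_i f_i^{p}\theta_{\partial_i}^{p}$, so this condition does not depend on the chosen basis and glues to the intrinsic one that $\theta_\partial^{p}=0$ for every local section $\partial$ of $T_{X/k}$, which is exactly weak $p$-nilpotency; moreover weak $p$-nilpotency already forces $\theta$ to be nilpotent, since the $\mathrm{Sym}\,T_{X/k}$-action then factors locally through the finite free quotient $(\mathrm{Sym}\,T_{X/k})/(\partial_1^{p},\dots,\partial_n^{p})$, so no nilpotence hypothesis need be imposed. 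By fpqc descent for vector bundles on $\mathbf{B}_X(T_{X/k}\otimes\alpha_p)$ we then obtain the asserted equivalence. Via the morphism $X^{\HT}\to X^{\HT}_{p-\wnil}$ it realizes weakly $p$-nilpotent Higgs bundles as a full subcategory of $\Vect(X^{\HT})\simeq\mathrm{Higgs}^{\nil}(X)$, and it is compatible with tensor products, partially answering Question \ref{question}.

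The main obstacle is carrying out the second paragraph honestly. One must fix the precise meaning of the fpqc group $T_{X/k}\otimes\alpha_p$ and of vector bundles on the stack $X^{\HT}_{p-\wnil}$, identify the distribution algebras of $\mathds{G}_a^\sharp$ and of $\alpha_p$ and the map $\mathds{G}_a^\sharp\to\alpha_p$, check that representations of $T_{X/k}\otimes\alpha_p$ are indeed what is claimed, and then descend a basis-dependent pointwise condition to a basis-independent intrinsic one --- here the identity $\theta_{\sum_i f_i\partial_i}^{p}=\sum_i f_i^{p}\theta_{\partial_i}^{p}$, which rests on the commutativity of the components of a Higgs field, is exactly what makes the intrinsic formulation of weak $p$-nilpotency available. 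A secondary point needing care is the trivialization of $X^{\HT}$ in the first paragraph: one should confirm that the section produced by the prismatization package intertwines the standard equivalence between $\Vect(X^{\HT})$ and Hodge--Tate crystals with the Bhatt--Lurie identification $\Vect(\mathbf{B}_X T^\sharp_{X/k})\simeq\mathrm{Higgs}^{\nil}(X)$. Everything else is formal.
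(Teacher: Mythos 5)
Your proposal follows essentially the same route as the paper: trivialize $X^{\HT}$ via the $\delta$-lifting, push the trivialization forward along $T^\sharp_{X/k}\to T_{X/k}\otimes\alpha_p$ to get $X^{\HT}_{p-\wnil}\simeq\mathbf{B}(T_{X/k}\otimes\alpha_p)$, and then reduce to the local classification of $\alpha_p^{\oplus n}$-representations as commuting endomorphisms with vanishing $p$-th power. Your additional verification that the basis-dependent condition agrees with the intrinsic one, via $\theta_{\sum_i f_i\partial_i}^{p}=\sum_i f_i^{p}\theta_{\partial_i}^{p}$ for commuting $\calO$-linear operators in characteristic $p$, is a detail the paper leaves implicit and is correctly handled.
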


\subsection{Notations}

\subsubsection*{ General Notations of sites, sheaves and stacks}

\begin{enumerate}
    \item Let $\X$ be a site and $\G$ be a stack on $\X$. For any $Y\in\X$ and $x_1,x_2\in\G(Y)$, denote by $\Isom_Y(x_1,x_2)$ the sheaf (on the restricted site $\X/Y$) of isomorphisms between $x_1$ and $x_2$.

\end{enumerate}

\subsubsection*{Notations of rings, schemes and formal schemes}
\begin{enumerate}
    \item For any ring $R$ of characteristic $p$, let $R^{\perf}$ be its perfection
$$R^{\perf}:=\varinjlim_{x\mapsto x^p}R$$
and obviously, we can globalize this definition to any scheme of characteristic $p$. For any scheme $Y$ over $\mathbb{F}_p$, define $Y^{\perf}$ as the perfection of $Y$.

\item For any ring $R$, we will use $\W(R)$ (resp. $\W_n(R)$) to denote the ring of Witt vectors (resp. truncated Witt vectors). More generally, for any ringed site $(\calX,\calO)$, we will use $\W(\calO)$ (resp. $\W_n(\calO)$) to denote the sheaf of rings sending each $U\in\calX$ to $\W(\calO(U))$ (resp. $\W_n(\calO(U))$ ).

\item For any scheme or formal scheme $Y$, $|Y|$ denotes the underlying space of $Y$.

\item Let $Y$ be a scheme, the ringed space $(|Y|,\W(\calO_{Y}))$ (resp. $(|Y|,\W_n(\calO_Y))$) is simply denoted by $\W(Y)$ (resp. $\W_n(Y)$). If $Y=\Spec(R)$ is affine and perfect, $\W(Y)$ (resp. $\W_n(Y)$) is equal to $\Spf(\W(R))$ (resp. $\Spec(\W_n(R))$). Generally, if $Y$ is a perfect scheme, $\W(Y)$ (resp. $\W_n(Y)$) is a formal scheme (resp. scheme).

\item Let $\calS ch$ be the category of all quasi-compact schemes. For a scheme $Y$, denote by $\calS ch_Y$ the category of all quasi-compact schemes over $Y$.

\item Let $f:Y\to S$ be a smooth morphism of schemes, define
\[T_{Y/S}\]
as the tengent bundle.

\item Let $Y$ be a scheme of characteristic $p$, define $F_*(-)$ (resp. $F^*(-)$) as the Frobenius push-forward (resp. Frobenius pull-back) functor of $Y$.

\end{enumerate}

\subsubsection*{Divided powers}

We refer \cite{Roby_1963} and \cite[\href{https://stacks.math.columbia.edu/tag/07H4}{Tag 07H4}]{stacks-project} for the following definitions.

\begin{enumerate}
    \item Let $R$ be a ring, denote by
    \[R[X_1,X_2,\dots,X_n]_{\pd}\]
    the divided power polynomial ring of $n$ variables over $R$.

    \item Let $R$ be a ring and $E$ be a finite rank projective $R$-module. Denote by
    \[R[E]_{\pd}\]
    the free $\pd$-algebra of $E$.
    Generally, for a scheme $Y$ and a vector bundle $\calE/Y$, we can define a quasi-coherent $\calO_Y$-algebra
    \[\calO_Y[\calE]_{\pd}.\]

    \item Let $R$ be a ring and $E$ be a finite rank projective $R$-module. Denote by
    \[\Gamma^i(E)\]
    the $i$-th divided power of $E$. By definition, it is canonically isomorphic to the module
    \[(\bigotimes_{j=1}^i E)^{S_i}\]
    where $S_i$ is the $i$-th permutation group that acts on $\bigotimes_{j=1}^i E$ by permuting the tensor factors.
\end{enumerate}

\subsubsection*{Various groups associated to vector bundles}

\begin{enumerate}
\item Let $R$ be a ring and $E$ be a finite rank projective module over $R$, denote by $R[E]$ the symmetric algebra of $E$. Generally, if $Y$ is a scheme and $\calE$ is a vector bundle over $Y$, denote by $\calO_Y[\calE]$ the symmetric algebra of $\calE$.

\item Let $Y$ be a scheme and $\calE$ be a vector bundle over $Y$. The functor sending all $(f:Y'\to Y)\in \calS ch_Y$ to the global section $\Gamma(Y',f^*\calE)$ is represented by $\Spec(\calO_Y[\calE^\vee])$. In the following, we will consider $\calE$ both as a Zariski sheaf and a presheaf of groups on $\calS ch_Y$.

    \item Let $\mathbb{G}_a^\sharp$ be the group
    \[\Spec(\Z[X]_{\mathrm{pd}}).\]
    For an affine scheme $Y=\Spec(R)$, $\mathbb{G}_{a}^\sharp( Y)$ has a natural $R$-module structure. Hence, we can view $\mathbb{G}_a^\sharp$ as a presheaf of $\mathbb{G}_a$-modules on $\calS ch$. 

    \item Let $Y$ be a scheme and $\calE$ be a vector bundle on $Y$, define the group functor $\calE^\sharp$ on $\calS ch_Y$ as 
    \[\mathbb{G}_a^\sharp\otimes_{\mathbb{G}_a}\calE.\]
    Note that since $\calE$ is locally free on $Y$, $\calE^\sharp$ is represented by the relative spectrum
    \[\Spec(\calO_Y[\calE^\vee]_{\pd}).\]
\end{enumerate}

\subsubsection*{Obstructions}

Let $k$ be a perfect field and $X/k$ be a smooth variety.
\begin{enumerate}
    \item Let $X^\HT$ be the Hodge--Tate stack of $X$. It is a fpqc $T_{X/k}^\sharp$-gerbe and hence associate an obstruction class
    \[o_\HT(X)\in \hol^2(X_{\mathrm{fpqc}},T_{X/k}^\sharp).\]

    \item There exists a cohomology class
    \[o_F(X)\in \hol^1(X_{\Zar},T_{X/k}\otimes_{\calO_X}F_*\calO_X/\calO_X)\]
    parameterizing the failure of the existence of a Frobenius liftings over $\W_2(k)$. By Grothendieck spectral sequence, it induces an element in $\hol^2(X_{\mathrm{fpqc}},\alpha_p\otimes_{\mathbb{G}_a}T_{X/k})$, which we will denoted by $o_{F,\mathrm{fpqc}}(X)$.
\end{enumerate}
We sometimes omit $X$ and just use $o_\HT$ (resp. $o_F$, $o_{F,\mathrm{fpqc}}$) to denote the corresponding obstruction class.

\subsection{Acknowledgement}

The author would like to thank Yupeng Wang for introducing this topic and helping the author on writing. We also thank Xiaoyu Qu for helpful discussion on some homological algebra.

\section{Frobenius liftings of smooth schemes}

One of the crucial objects in this paper is the Frobenius lifting. We would like to recall some basic properties of the Frobenius liftings. All results in this section are not new (see, for example, the appendix of \cite{Mehta87}.

The main consequence is Theorem \ref{theo:obstruction of lifting frob}, saying that the Frobenius liftings form a Zariski torsor of $T_{X/k}\otimes_{\calO_X}F_{*}\calO_X/\calO_X$, and is isomorphic to the torsor of splits of the square zero extension
\[\W_2(\calO_X)/p\to \calO_X.\]

Let $k$ be a perfect field, and $X/k$ be a smooth scheme.

\begin{defi}
    Let $n\geq 1$ be an integer. A Frobenius lifting of $X$ over $\W_n(k)$ consists of a smooth scheme $\widetilde{X}/\W_n(k)$, an isomorphism of $k$-schemes 
    \[X\cong \widetilde{X}\times_{\Spec\left(\W_n(k)\right)}\Spec(k)\]
    (hence, we can view $X$ as a closed subscheme of $\widetilde{X}$) and an automorphism (of schemes) $\phi$ of $\widetilde{X}$ satisfying the following properties.
    \begin{enumerate}
        \item The following diagram commutes
        \[\begin{tikzcd}
            \widetilde{X} \arrow{r}{\phi}\ar[d] & \widetilde{X}\ar[d]\\
            \Spec\left(\W_n(k)\right) \arrow{r}{\Spec(\varphi)} & \Spec\left(\W_n(k)\right)
        \end{tikzcd},\]
        where $\varphi$ is the Frobenius of Witt vectors.
        \item The restriction of $\phi$ on $X$ is equal to the Frobenius of $X$.
    \end{enumerate}

    Define isomorphisms between Frobenius liftings in the natural way. Similarly, we define a Frobenius lifting of $X$ over $\W(k)$ as a $p$-completely smooth formal scheme $\fX$ over $\W(k)$ which contains $X$ as the special fiber, and an endomorphism $\phi$ of $\fX$ that lifts the Frobenius of $X$ and $\phi$ is compatible with the Frobenius of $\W(k)$.

    If we do not specify the index $n$, it is always assumed that a Frobenius lifting is over $\W_2(k)$.
\end{defi}

\begin{rmk}
    Indeed, the compatibility assumption $1$ is not necessary, see \cite[Lemma 6.5.13]{Gabber_2003}
\end{rmk}

By the usual Zariski descent, there exists a stack $\calF$ on $X_{\Zar}$, sending any open subset $U\subset X$ to the groupoid of Frobenius liftings of $U$. The first important property of $\calF$ is the following.

\begin{theo}\label{theo:obstruction of lifting frob}
    The stack $\calF$ is a sheaf. Moreover, there exists a canonical $T_{X/k}\otimes_{\calO_X}F_*\calO_X/\calO_X$-action on $\calF$ that makes $\calF$ a $T_{X/k}\otimes_{\calO_X}F_*\calO_X/\calO_X$-torsor.
\end{theo}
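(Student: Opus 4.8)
The plan is to prove the statement in four moves: triviality of automorphisms of Frobenius liftings (which gives that $\calF$ is a sheaf), Zariski-local existence of liftings, construction of the $T_{X/k}\otimes_{\calO_X}F_*\calO_X/\calO_X$-action, and simple transitivity. The mechanism used everywhere is that the Frobenius kills Kähler differentials, i.e.\ $D\circ F=0$ for any $k$-derivation $D$. At the end I match $\calF$ with the torsor of splittings of $\W_2(\calO_X)/p\twoheadrightarrow\calO_X$, as claimed in the text.

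\emph{Sheaf property and local existence.} Given $(\widetilde X,\phi)\in\calF(U)$, any automorphism is an automorphism $\psi$ of the $\W_2(k)$-scheme $\widetilde X$ that restricts to $\mathrm{id}$ on $U$ and commutes with $\phi$; since $p^{2}=0$, multiplication by $p$ identifies $p\calO_{\widetilde X}\cong\calO_X$ and $\psi$ has the form $\mathrm{id}+pD$ with $D\in T_{X/k}(U)$. On structure sheaves one gets $\psi^{\sharp}\phi^{\sharp}=\phi^{\sharp}$ because $D$ kills $p$-th powers, while $\phi^{\sharp}\psi^{\sharp}=\phi^{\sharp}+p\,(F\circ D\circ\mathrm{red})$ because $\phi^{\sharp}$ acts on $p\calO_{\widetilde X}\cong\calO_X$ through the Frobenius; commutation forces $F\circ D=0$, hence $D=0$ as $\calO_X$ is reduced. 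So each $\calF(U)$ is a set and $\calF$ is a sheaf. For local non-emptiness, the affine opens $\Spec A\subseteq X$ étale over some $\mathbb{A}^{n}_{k}$ form a basis; for such $A$ lift the étale map to $\Spec\widetilde A\to\mathbb{A}^{n}_{\W_2(k)}$, observe that the composite of this with $T_i\mapsto T_i^{p}$ reduces mod $p$ to a morphism factoring through $\Spec\widetilde A\to\mathbb{A}^{n}_{k}$ (functoriality of Frobenius), and use the infinitesimal lifting property of étale morphisms to lift it to a Frobenius lifting $\phi$ on $\widetilde A$.

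\emph{The action and the torsor structure.} Work over an affine $U=\Spec A$. For $\theta\in\mathrm{Der}_k(A,F_*A)=\mathrm{Hom}_A(\Omega^{1}_{A/k},F_*A)=\bigl(T_{X/k}\otimes_{\calO_X}F_*\calO_X\bigr)(U)$, where $F_*A$ is $A$ with $A$ acting via Frobenius, the map $\phi^{\sharp}+p\theta$ (precomposed with $\widetilde A\twoheadrightarrow A$, post-composed with $A\cong p\widetilde A$) is again a ring homomorphism lifting the Frobenius---multiplicativity is exactly the twisted Leibniz identity $\theta(xy)=\bar x^{p}\theta(y)+\bar y^{p}\theta(x)$---and conversely any two Frobenius liftings with the same $\widetilde A$ differ by a unique such $\theta$; so $T_{X/k}\otimes_{\calO_X}F_*\calO_X$ acts simply transitively on the set of Frobenius liftings supported on a fixed $\widetilde A$. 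To descend to $G:=T_{X/k}\otimes_{\calO_X}F_*\calO_X/\calO_X$, note that the submodule $\calO_X\subseteq F_*\calO_X$ (embedded by the $p$-power map, injective since $\calO_X$ is reduced) contributes exactly the $\theta$ of the form $F\circ D$ with $D\in T_{X/k}(U)$, and that the computation of the first move, applied with $\phi$ replaced by $\phi+p\theta$, shows that conjugation by the automorphism $\mathrm{id}+pD$ of $\widetilde X$ carries $\phi+p\theta$ to $\phi+p(\theta+F\circ D)$. Hence $(\widetilde X,\phi)\cdot\bar\theta:=(\widetilde X,\phi+p\theta)$ (any lift $\theta$ of $\bar\theta\in G(U)$) is well defined on isomorphism classes and gives a free $G$-action on $\calF$; it is transitive since any two smooth liftings of $\Spec A$ over $\W_2(k)$ are isomorphic as liftings (obstruction and difference in $H^{2}$ resp.\ $H^{1}$ of an affine, hence zero), which reduces any pair of Frobenius liftings over $U$ to a common $\widetilde A$. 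Together with local non-emptiness, $\calF$ is a $G$-torsor on $X_{\Zar}$.

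\emph{Comparison with splittings, and the main obstacle.} The square-zero extension $\W_2(\calO_X)/p\twoheadrightarrow\calO_X$ has kernel $F_*\calO_X/\calO_X$ (the lower row of \eqref{eq:basic obstruction sequence in H^1}), so its sheaf of splittings, once non-empty, is a torsor under $\mathrm{Der}_k(\calO_X,F_*\calO_X/\calO_X)=T_{X/k}\otimes_{\calO_X}(F_*\calO_X/\calO_X)=G$. Sending $(\widetilde X,\phi)$ to the composite $\calO_X=\calO_{\widetilde X}/p\to\W_2(\calO_{\widetilde X})/p\to\W_2(\calO_X)/p$, where the first arrow is the ring homomorphism attached to the $\delta$-structure $\delta(x)=(\phi(x)-x^{p})/p$ on $\calO_{\widetilde X}$ (descending to $\calO_{\widetilde X}/p$ because $\delta$ carries $p\calO_{\widetilde X}$ into $p\W_2(\calO_{\widetilde X})$, and a splitting because its ghost coordinates are $x\mapsto(\bar x,\overline{\phi(x)})$), yields a morphism of $G$-torsors---the splittings of $\phi$ and $\phi+p\theta$ differ by $V\circ\bar\theta$---hence an isomorphism, identifying $\calF$ with the torsor of splittings. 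The main obstacle is the bookkeeping in the second move: one must check that the purely geometric operation ``conjugate $\phi$ by an automorphism of $\widetilde X$'' matches, on the nose, adding an element in the image of $T_{X/k}\hookrightarrow T_{X/k}\otimes_{\calO_X}F_*\calO_X$, since this is precisely what makes the band of $\calF$ the quotient $T_{X/k}\otimes_{\calO_X}F_*\calO_X/\calO_X$ rather than $T_{X/k}\otimes_{\calO_X}F_*\calO_X$ itself; this requires careful tracking of the several Frobenius-twisted $\calO_X$-module structures (the $a\mapsto[a^{p}]$ already visible in \eqref{eq:basic obstruction sequence in H^1}) and of the identifications $p\calO_{\widetilde X}\cong\calO_X$ and $\ker\bigl(\W_2(\calO_X)/p\to\calO_X\bigr)\cong F_*\calO_X/\calO_X$.
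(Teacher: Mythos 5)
Your proof is correct and follows essentially the same route as the paper: the key computation that conjugating $\phi$ by an automorphism $\mathrm{id}+pD$ shifts it by $p(F\circ D)$ (whence trivial automorphism groups by reducedness, and whence the band is the quotient by $T_{X/k}\otimes_{\calO_X}\calO_X$) is exactly the paper's Lemma \ref{lemma:key lemma lift frob}, and the identification of liftings over a fixed $\widetilde A$ with $\mathrm{Der}_k(A,F_*A)$ via deformation theory is the same. You additionally spell out local existence via étale coordinates and the comparison with splittings of $\W_2(\calO_X)/p\to\calO_X$, which the paper defers to Theorem \ref{theo: splitting W2=lifting}.
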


Before proving the theorem, we need the following result.

\begin{lemma}\label{lemma:key lemma lift frob}
    Let $A$ be a flat $\Z/p^2$-algebra and $I:A\to A$ be an automorphism such that
    $$I(x)\equiv x\mod p$$
    for any $x\in A$. Let $\phi:A\to A$ be an endomorphism such that
    $$\phi(x)\equiv x^p\mod p$$
    for all $x\in A$. Put $\phi'=I\circ\phi\circ I^{-1}$. Then for any $x\in A$, there exists a $y\in A$ such that
    $$\phi(x)-\phi'(x)-py^p=0.$$

    Moreover, if $A/p$ is reduced, $I=id$ if and only if $\phi'=\phi$.
\end{lemma}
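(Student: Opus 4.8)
The plan is to control everything through the additive operator $D:=I-\mathrm{id}_A\colon A\to A$, using the flatness of $A$ over $\Z/p^2$. Flatness gives a short exact sequence $0\to A/p\xrightarrow{\cdot p}A\to A/p\to 0$, so $pA\cong A/p$; concretely, an element of $pA$ can be written as $pz$ with $z$ well defined modulo $p$, and hence $z^p\bmod p$ and $pz^p$ are well defined. Moreover $p^2=0$ in $A$, which will annihilate all higher-order correction terms.

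First I would establish the one structural fact that drives the proof: $D$ is a derivation with values in $pA$, and consequently $I$ is the identity on the image of $\phi$. That $D$ lands in $pA$ is the hypothesis on $I$, and additivity is clear; writing $I(a)=a+D(a)$, $I(b)=b+D(b)$ and expanding $I(ab)=I(a)I(b)$ while discarding $D(a)D(b)\in p^2A=0$ gives the Leibniz rule $D(ab)=aD(b)+bD(a)$. Since $D(p)=0$, this forces $D(pA)=0$; and since $D(a)\in pA$, it gives $D(a^p)=p\,a^{p-1}D(a)\in p\cdot pA=0$. As $\phi(a)-a^p\in pA$, additivity of $D$ then yields $D(\phi(a))=0$ for every $a\in A$, i.e. $I\circ\phi=\phi$. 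I expect this step to be the crux; both assertions follow quickly once it is in place.

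For the first assertion, from $\phi'=I\circ\phi\circ I^{-1}$ one gets $\phi'\circ I=I\circ\phi=\phi$, hence $\phi'(I(x))=\phi(x)$, and therefore $\phi(x)-\phi'(x)=\phi'(I(x))-\phi'(x)=\phi'(I(x)-x)$ by additivity of $\phi'$. Writing $I(x)-x=pz$ (legitimate since $I(x)\equiv x\pmod p$) and using that $\phi'$ is a ring endomorphism with $\phi'(p)=p$ and $\phi'(z)\equiv z^p\pmod p$ (as $\phi$ does, and $I\equiv\mathrm{id}\pmod p$), one obtains $\phi'(pz)=p\,\phi'(z)=pz^p$, the correction term dying because $p^2=0$. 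Thus $y:=z$ satisfies $\phi(x)-\phi'(x)-py^p=0$.

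For the ``moreover'', one direction is trivial. Conversely, assume $A/p$ is reduced and $\phi'=\phi$, i.e. $I$ commutes with $\phi$. Then for each $a$ one has $\phi(a)+\phi(D(a))=\phi(a+D(a))=\phi(I(a))=I(\phi(a))=\phi(a)$, so $\phi(D(a))=0$. Writing $D(a)=pb$ and using $\phi(p)=p$, $\phi(b)\equiv b^p\pmod p$, this becomes $pb^p=0$, i.e. $\overline{b}^p=0$ in $A/p$; reducedness forces $\overline{b}=0$, hence $D(a)=pb=0$. Since $a$ was arbitrary, $I=\mathrm{id}_A$.
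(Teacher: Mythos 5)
Your proof is correct and follows essentially the same route as the paper's: both arguments rest on writing $I=\mathrm{id}+p\cdot(\text{derivation})$, killing all higher-order terms via $p^2=0$, and using $\phi(u)\equiv u^p\pmod p$ to identify the discrepancy as $p\,z^p$ with $pz=I(x)-x$. Your intermediate observation that $I\circ\phi=\phi$ (so that $\phi'\circ I=\phi$) is a tidy repackaging of the paper's computation of $d(\phi(z))\equiv 0\pmod p$, but it is not a substantively different argument.
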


\begin{proof}
    Let $d=\frac{I-id_A}{p}:A/p^2\to A/p$. As for any $x\in A$, $I(px)-px=p(I(x)-x)=0$, $d$ induces a derivation from $A/p$ to itself. Hence,
    $$\phi'(x)=I\circ\phi\circ I^{-1}(x)=(id_A+pd)\circ \phi\circ(id_A-pd)(x).$$
    The right hand side is equal to
    $$\phi\left(x-pd(x)\right)+pd\left(\phi\left(x-pd(x)\right)\right)\equiv \phi(x)-pd(x)^p\mod p$$
    where the last equality follows from the fact $d\left(\phi(z)\right)\equiv d(z^p)\equiv pz^{p-1}d(z)\mod p$ for any $z$. Thus
    $$\phi(x)-\phi'(x)-pd(x)^p=0$$ and the first part of the lemma follows.

    Assume $A/p$ is reduced. The above calculation shows that $\phi=\phi'$ is equal to $d=0$, which is also equal to $I=id$. 
\end{proof}

\begin{proof}[Proof of Theorem \ref{theo:obstruction of lifting frob}]
    The sheafiness of $\calF$ follows from Lemma \ref{lemma:key lemma lift frob}, which says that the automorphic groups of all sections are trivial.
    
    Now we construct an action of $T_{X/k}\otimes_{\calO_X}F_*\calO_X/\calO_X$ on $\calF$. For any affine open $U=\Spec(A)$ of $X$, a Frobenius lifting $(\widetilde{A},\phi)$ and $\bar \partial\in \mathrm{Der}_{k}(A,F_*A/A)$, define the action of $\bar{\partial}$ on $(\widetilde{A},\phi)$ as $(\widetilde A,\phi+p\partial)$ where $\partial$ is a lifting of $\bar\partial$ in $\mathrm{Der}_k(A,F_*A)$. This construction does not depend on the choice of $\partial$ or the isomorphic class of $(\widetilde{A},\phi)$ according to the calculation in the proof of Lemma \ref{lemma:key lemma lift frob}. Moreover, this action provides a bijection between $\mathrm{Der}_k(A,F_*A/A)$ and the set of isomorphic classes of Frobenius liftings. Indeed, since $A$ is smooth, each $\W_2(k)$-smooth lifting of $A$ is isomorphic to $\widetilde{A}$. Thus, the set of isomorphic classes of Frobenius liftings of $A$ is identified with the set of Frobenius liftings on $\widetilde{A}$ modulo the action of the automorphic group of the thickening $(\widetilde{A}\rightarrow A)$.

    By usual deformation theory, the Frobenius liftings on $\widetilde{A}$ are identified with
    \[\mathrm{Der}_k(A,F_*A),\]
    and the automorphisms of $(\widetilde{A}\to A)$ are identified with
    \[\mathrm{Der}_k(A,A).\]
    The calculation in the proof of Lemma \ref{lemma:key lemma lift frob} shows the claim.
\end{proof}

\begin{defi}
    We call $\calF$ the \emph{torsor of Frobenius liftings} of $X$ and the corresponding cohomology class $o_F\in \hol^1(X_{\text{Zar}},T_{X/k}\otimes_{\calO_X}F_*\calO_X/\calO_X)$ is called the \emph{obstruction class of the existence of Frobenius liftings}.
\end{defi}

As a direct corollary, we have the following.

\begin{theo}
    Let $X/k$ be a smooth variety and $o_F\in \hol^1(X_{\text{Zar}},T_{X/k}\otimes_{\calO_X}F_*\calO_X/\calO_X)$ be the obstruction class of the existence of Frobenius liftings of $X$. Then the existence of Frobenius liftings of $X$ is equivalent to $o_F=0$. Moreover, if this holds, the set of isomorphic classes of Frobenius liftings of $X$ admits an action by  $\hol^0(X_{\text{Zar}},T_{X/k}\otimes_{\calO_X}F_*\calO_X/\calO_X)$, which is transitive and faithful.
\end{theo}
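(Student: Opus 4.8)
The statement is a formal consequence of Theorem~\ref{theo:obstruction of lifting frob} together with the standard yoga of torsors under a sheaf of abelian groups, so the plan is essentially bookkeeping. Write $G:=T_{X/k}\otimes_{\calO_X}F_*\calO_X/\calO_X$, a (quasi-coherent, hence) sheaf of abelian groups on $X_{\Zar}$. By Theorem~\ref{theo:obstruction of lifting frob}, $\calF$ is a sheaf --- equivalently, a stack all of whose automorphism sheaves are trivial --- and carries the structure of a $G$-torsor; by definition $o_F\in\hol^1(X_{\Zar},G)$ is the class of this torsor under the usual identification of $\hol^1(X_{\Zar},G)$ with the set of isomorphism classes of $G$-torsors.

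First I would record that, since $\calF$ is a sheaf, the groupoid $\calF(X)$ of Frobenius liftings of $X$ is (equivalent to) the \emph{set} $S$ of isomorphism classes of Frobenius liftings of $X$. Then I would invoke the elementary facts that a $G$-torsor $P$ is trivial (i.e.\ isomorphic to $G$ acting on itself by translation) if and only if $P(X)\ne\emptyset$, and that $P$ is trivial if and only if its class in $\hol^1(X_{\Zar},G)$ vanishes. Applying both to $P=\calF$ yields: $o_F=0$ if and only if $S=\calF(X)\ne\emptyset$, i.e.\ if and only if a Frobenius lifting of $X$ exists; this is the first assertion.

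For the ``moreover'' I would argue as follows. For any $G$-torsor $P$, the natural action of $\hol^0(X_{\Zar},G)=G(X)$ on $P(X)$ is free, and it is transitive as soon as $P(X)\ne\emptyset$; both become obvious after choosing a trivialization $P\simeq G$, under which $P(X)$ becomes $G(X)$ with its translation action. Taking $P=\calF$ and using $\calF(X)=S\ne\emptyset$ from the first part, I conclude that $\hol^0(X_{\Zar},G)$ acts on $S$ transitively and freely; freeness in particular yields faithfulness, which completes the proof.

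There is no genuine obstacle here: the substantive input --- that $\calF$ is a sheaf and a $G$-torsor, its automorphism sheaves being killed by Lemma~\ref{lemma:key lemma lift frob} --- has already been established in Theorem~\ref{theo:obstruction of lifting frob}. The only points needing a little care are (i) the passage from the groupoid-valued $\calF$ to a sheaf of sets, so that ``sections over $X$'' literally means ``isomorphism classes of global Frobenius liftings'', and (ii) fixing the cohomological convention so that vanishing of $o_F$ is exactly the triviality of the torsor $\calF$; both are routine.
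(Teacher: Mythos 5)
Your proposal is correct and follows essentially the same route as the paper: both reduce the statement to Theorem~\ref{theo:obstruction of lifting frob} and then apply the standard dictionary between torsors under a sheaf of abelian groups, vanishing of the class in $\hol^1$, and the free transitive action of $\hol^0$ on global sections. Your write-up is merely more explicit about the bookkeeping (sheaf versus groupoid, cohomological conventions) than the paper's two-line argument.
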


\begin{proof}
    Let $\calF$ be the torsor of Frobenius liftings. By definition, the existence of Frobenius liftings is equal to $\Gamma(X,\calF)\neq \emptyset$, which is equal to the triviality of the torsor $\calF$. This proves the first claim. If $\calF$ is trivial, the second claim is valid by the definition of torsors.
\end{proof}

\begin{theo}
    The sheaf of rings $\W_2(\calO_X)/p$ is a square zero extension of $\calO_X$ by the sheaf of modules $F_*\calO_X/\calO_X$. In particular, there exists a canonical exact sequence
    \[0\to F_*\calO_X/\calO_X\to \W_2(\calO_X)/p\to \calO_X\to 0.\]
\end{theo}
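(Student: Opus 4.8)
The plan is to reduce the statement to the affine case and to a direct computation with length-two Witt vectors. So first I would assume $X=\Spec(R)$ with $R$ a smooth, hence reduced, $k$-algebra, and recall the basic structure of $\W_2(R)$: as a set it is $R\times R$; the restriction homomorphism $\W_2(R)\to\W_1(R)=R$ is the first-coordinate projection $(a_0,a_1)\mapsto a_0$, a surjective ring map with kernel the Verschiebung ideal $V(R)=\{(0,a):a\in R\}$; the Teichmüller lift of $a$ is $[a]=(a,0)$; and one has the identity $x\cdot V(c)=V\!\left(c\,F(x)\right)$ for $x\in\W_2(R)$, $c\in R$.

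The first key step is to identify $p\,\W_2(R)$. Using $p=VF$ together with the fact that $F\colon\W_2(R)\to\W_1(R)=R$ is the reduction of $(a_0,a_1)\mapsto a_0^p+pa_1$, which is simply $a_0\mapsto a_0^p$ because $R$ is an $\mathbb{F}_p$-algebra, I get $p\cdot(a_0,a_1)=V(a_0^p)=(0,a_0^p)$. Hence $p\,\W_2(R)=V\!\left(\{a^p:a\in R\}\right)$, and the identity above confirms this is an ideal. Consequently the restriction map descends to a surjection $\W_2(R)/p\twoheadrightarrow R$ whose kernel is $V(R)/V\!\left(\{a^p:a\in R\}\right)$; this will be the exact sequence, once the kernel is identified.

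The second step is the square-zero property together with the module identification. From $x\cdot V(c)=V\!\left(c\,F(x)\right)$ and $FV=p=0$ on $R$ one gets $V(a)V(b)=V\!\left(b\,F(V(a))\right)=V(b\cdot pa)=0$, so $V(R)$, and a fortiori its image in $\W_2(R)/p$, is square zero; this already exhibits $\W_2(R)/p$ as a square-zero extension of $R$. To see the extension is by $F_*R/R$, I compute the induced $R$-module structure on the kernel using the same identity: for $\bar r\in R$ with Teichmüller lift $[\bar r]$ one has $\bar r\cdot V(c)=V\!\left(c\,F([\bar r])\right)=V(\bar r^{\,p}c)$. Thus $V$ gives an $R$-linear isomorphism $F_*R\xrightarrow{\ \sim\ }V(R)$, where $F_*R$ carries the Frobenius-twisted action $r\cdot c=r^{p}c$, and this isomorphism carries the $R$-submodule $\{a^p:a\in R\}\subset R$ onto the image of the $R$-linear absolute Frobenius $\calO_X\to F_*\calO_X$, $a\mapsto a^p$. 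Since $R$ is reduced this map is injective, so $V(R)/V(\{a^p\})\cong F_*R/R=\operatorname{coker}(\calO_X\to F_*\calO_X)$.

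Finally I would globalize. All of the above is functorial in $R$, and $\W_2(\calO_X)=\calO_X\times\calO_X$ with the twisted Witt operations is already a sheaf; since the short exact sequence is computed on affine opens (the relevant presheaf quotients being sheaves there by quasi-coherence), it glues to the asserted exact sequence of $\calO_X$-modules $0\to F_*\calO_X/\calO_X\to\W_2(\calO_X)/p\to\calO_X\to 0$, with $\W_2(\calO_X)/p$ a square-zero extension of $\calO_X$ by $F_*\calO_X/\calO_X$. I do not expect a genuine obstacle here; the points that need care are purely conventional: checking that the Frobenius twist produced by $F([\bar r])=\bar r^{\,p}$ is exactly the module structure meant by "$F_*\calO_X$", that "$/\calO_X$" denotes the cokernel of the absolute Frobenius $\calO_X\to F_*\calO_X$ (legitimate precisely because $X$ is reduced), and that the quotient $\W_2(\calO_X)/p$ is formed sheaf-theoretically. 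Everything else is standard Witt-vector bookkeeping.
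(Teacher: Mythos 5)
Your proof is correct and follows essentially the same route as the paper: project $\W_2(\calO_X)$ to the first (ghost) coordinate, verify the square-zero property via $V(a)V(b)=V\bigl(FV(a)\,b\bigr)=0$, identify $p\,\W_2(\calO_X)$ with $V(\calO_X^p)$ using $p\cdot(a_0,a_1)=(0,a_0^p)$, and match the Frobenius-twisted module structure $x\cdot V(c)=V(x^pc)$ with that of $F_*\calO_X$. Your write-up is if anything slightly more careful than the paper's, in making explicit where reducedness and the quasi-coherence needed for globalization enter.
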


\begin{proof}
    Define the homomorphism $\W_2(\calO_X)\to \calO_X$ as the projection to the first ghost coordinate. It is obviously surjective. Let $\pi:\W_2(\calO_X)/p\to \calO_X$ be the induced homomorphism.

    First, we check that $(\ker \pi)^2=0$, which is equivalent to checking that for any
    \[x,\ y\in \W_2(\calO_X),\]
    \[V(x)V(y)\in p\W_2(\calO_X).\]
    This follows from the equality
    \[V(x)V(y)=V(FV(x)y)=V(pxy)=pV(xy).\]

    Define the morphism of sheaves $g:\calO_X\to \ker \pi$ as sending $f$ to the projection of the ghost coordinate $(0,x)$. Since in $\W_2(\calO_X)$
    \[(0,x)+(0,y)=(0,x+y)\]
    $g$ is a homomorphism of groups.
    By definition, for any ring $R$ over $\mathbb{F}_p$ and $(x,y)\in \W_2(R)$,
    \[p(x,y)=(0,x^p).\]
    Hence, $g$ induces an isomorphism of groups $F_*\calO_X/\calO_X\cong \ker \pi$. After a direct calculation, one can prove that this is $\calO_X$-linear.
\end{proof}

Define $o\in \hol^1(X,T_{X/k}\otimes_{\calO_X}F_*\calO/\calO)$ as the obstruction of trivializing the extension $\W_2(\calO_X)/p\to \calO_X$.

\begin{theo}\label{theo: splitting W2=lifting}
    Let $X/k$ be a smooth variety.
    \begin{enumerate}
        \item There exists a canonical map from the set of the retractions of
        \[\W_2(\calO_X)/p\to \calO_X\]
        to the set of isomorphic classes of Frobenius liftings.

        \item Let $o_{F}$ be the obstruction class of the existence of $F$-liftings. Then $o_F=o$ in \[\hol^1(X,T_{X/k}\otimes_{\calO_X}F_*\calO/\calO).\]
    \end{enumerate}
\end{theo}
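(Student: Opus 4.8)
The plan is to realise both sides as torsors under one sheaf of groups and to produce an equivariant comparison map between them. Put $G:=T_{X/k}\otimes_{\calO_X}F_*\calO_X/\calO_X$. As shown above, $\W_2(\calO_X)/p\to\calO_X$ is a square-zero extension of the $\calO_X$-algebra $\calO_X$ by the $\calO_X$-module $F_*\calO_X/\calO_X$; hence the sheaf $\mathcal S$ of its retractions carries a natural action of $\mathrm{Der}_k(\calO_X,F_*\calO_X/\calO_X)=G$, and it is locally non-empty because over an affine $\Spec A$ the obstruction to a retraction lies in $\mathrm{Ext}^1_A(\Omega_{A/k},F_*A/A)=0$, $\Omega_{A/k}$ being projective. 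So $\mathcal S$ is a $G$-torsor with class $o$, while $\calF$ is a $G$-torsor with class $o_F$ by Theorem \ref{theo:obstruction of lifting frob}. Since any morphism of torsors under a fixed sheaf of groups is automatically an isomorphism, it is enough to construct a $G$-equivariant morphism of sheaves $\Phi\colon\mathcal S\to\calF$: such a $\Phi$ is the canonical map of (1), and it forces $o=o_F$, which is (2).

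The morphism $\Phi$ is built locally. On an affine $U=\Spec A$ with $A$ smooth (hence reduced) over $k$, and a retraction $s\colon A\to\W_2(A)/p$ of the ghost projection $\pi$, I would set
\[\widetilde A_s:=\{\,w\in\W_2(A)\ :\ w\bmod p\in s(A)\,\}\;=\;A\times_{\W_2(A)/p}\W_2(A),\]
a $\W_2(k)$-subalgebra of $\W_2(A)$ containing $p\W_2(A)$, and show that $\bigl(\widetilde A_s,\,F|_{\widetilde A_s}\bigr)$ is a Frobenius lifting of $A$, where $F$ is the Witt-vector Frobenius. Two verifications are needed. First, $\widetilde A_s$ is a smooth $\W_2(k)$-lift of $A$: since $A$ is reduced one gets $\widetilde A_s[p]=p\widetilde A_s=p\W_2(A)$ and $\widetilde A_s/p\widetilde A_s\cong A$, so $\widetilde A_s$ is flat over $\W_2(k)$ by the local flatness criterion, and it is finitely presented ($\W_2(k)$ is Noetherian and $\widetilde A_s$ is generated over $\W_2(k)$ by lifts of generators of $A$), hence smooth. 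Second, $F$ preserves $\widetilde A_s$: its reduction $\bar F$ on $\W_2(A)/p$ annihilates $\ker\pi$, so $\bar F=\widetilde F\circ\pi$ with $\widetilde F(a)=\overline{[a^p]}$; and any ring retraction satisfies $s(c)^p=\overline{[c^p]}$ for all $c$ (since $u^p$ and $[u_0^p]$ agree modulo $p\W_2(A)$), so $\bar F(s(A))=\widetilde F(A)=\{\overline{[c^p]}:c\in A\}\subseteq s(A)$ and therefore $F(\widetilde A_s)\subseteq\widetilde A_s$. One then checks $F|_{\widetilde A_s}$ reduces to the Frobenius of $A$ modulo $p$ and restricts to the Witt Frobenius on $\W_2(k)$; the construction commutes with Zariski localisation, so $\Phi\colon s\mapsto\bigl[\bigl(\widetilde A_s,F|_{\widetilde A_s}\bigr)\bigr]$ glues to a morphism $\mathcal S\to\calF$, which is the map of (1).

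For (2) I would check that $\Phi$ is $G$-equivariant. Given $s$ and a $k$-derivation $\partial\colon A\to F_*A$ lifting a section $d$ of $G$, the translate $d\cdot s$ is $(d\cdot s)(a)=s(a)+g(d(a))$, with $g\colon F_*\calO_X/\calO_X\xrightarrow{\ \sim\ }\ker\pi$ the isomorphism from the preceding theorem; I would define $\theta\colon\widetilde A_s\to\widetilde A_{d\cdot s}$ by $\theta(w)=w+V(\partial(w_0))$, $w_0$ denoting the first Witt component. Using the Witt identity $x\,V(y)=V(F(x)y)$ and the Frobenius-twisted $A$-module structure on $F_*A$ one sees $\theta$ is a ring isomorphism over $A$, and a short computation with $F$, $V$ and the vanishing $\partial(c^p)=0$ in $F_*A$ (characteristic $p$) gives $\theta^{-1}\circ\bigl(F|_{\widetilde A_{d\cdot s}}\bigr)\circ\theta=F|_{\widetilde A_s}+p\partial$. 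By the description of the $G$-action on $\calF$ from the proof of Theorem \ref{theo:obstruction of lifting frob}, this says $\Phi(d\cdot s)\cong d\cdot\Phi(s)$; so $\Phi$ is a morphism of $G$-torsors, hence an isomorphism, and $o=o_F$. I expect this equivariance step to be the main obstacle: one has to pin down the not-very-transparent $\phi\mapsto\phi+p\partial$ action on $\calF$ and check that matching it against the action on retractions produces neither a sign nor a Frobenius twist — it is precisely the Witt identities together with $\partial(c^p)=0$ that make $\Phi$ equivariant for the identity automorphism of $G$. The remaining points (flatness and finite presentation of $\widetilde A_s$, and compatibility with localisation) are routine.
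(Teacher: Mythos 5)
Your proof is correct, and the overall framework is the one the paper uses: exhibit both the sheaf of retractions and the sheaf $\calF$ of Frobenius liftings as torsors under $G=T_{X/k}\otimes_{\calO_X}F_*\calO_X/\calO_X$, produce a $G$-equivariant comparison map, and deduce both (1) and (2) at once since an equivariant map of torsors is an isomorphism. Where you differ is in the construction of the comparison map itself, and you build it in the opposite direction. The paper starts from a Frobenius lifting $(\widetilde R,\phi)$ and extracts a retraction via the $\delta$-structure $x\mapsto\bigl(x,\tfrac{\phi(x)-x^p}{p}\bigr)$, then inverts through the torsor structure to get the map asserted in (1); you start from a retraction $s$ and produce a lifting directly as the preimage $\widetilde A_s=A\times_{\W_2(A)/p}\W_2(A)$ of $s(A)$ in $\W_2(A)$, equipped with the restriction of the Witt-vector Frobenius $F(a_0,a_1)=(a_0^p,a_1^p)$. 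The two constructions are mutually inverse. Your route has two concrete advantages: it yields the map in the direction the statement actually asks for, and it makes explicit the equivariance check that the paper leaves as ``one can check'' --- your computation $\theta^{-1}\circ F\circ\theta=F+V(\partial(\cdot)^p)=F+p\partial$, resting on $xV(y)=V(F(x)y)$, $V(a)V(b)=0$ and $\partial(c^p)=0$, matches the $\phi\mapsto\phi+p\partial$ action defined in the proof of Theorem \ref{theo:obstruction of lifting frob}, so the identification is indeed equivariant for the identity of $G$. The flatness and finite-presentation verifications for $\widetilde A_s$ ($\widetilde A_s[p]=p\widetilde A_s=p\W_2(A)$ using reducedness of $A$, and generation by lifts of generators of $A$) are correct as written.
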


\begin{proof}
    (1) Let $R$ be a ring of characteristic $p$, and let $(\widetilde{R},\phi)$ be a $F$-lifting of $R$. Define
    \[\delta:\widetilde{R}\to R\]
    as $\frac{\phi(x)-x^p}{p}$. It is easy to prove that $\delta(px+y)=x^p+\delta(y)$ for any $x\in R$. Hence $\delta$ induces a map from $R$ to $F_*R/R$. It is easy to check that
    \[x\mapsto (x,\delta(x))\]
    induces a homomorphism from $R$ to $\W_2(R)/p$.

    This defines a map from the torsor of Frobenius liftings and the torsor of the retractions of $\W_2(\calO_X)/p\to \calO_X$. One can check that this map is $T_{X/k}\otimes_{\calO_X}F_*\calO_X/\calO_X$-equivariant. Hence, this map is an isomorphism of sheaves. The claim then follows from taking global sections.

    (2) Let $\calF$ be the Zariski sheaf of Frobenius liftings, which is a $T_{X/k}\otimes_{\calO_X}F_*\calO_X/\calO_X$-torsor. By definition, the cohomology class of this torsor is equal to the obstruction of lifting Frobenius.
\end{proof}

\section{The Hodge--Tate gerbe and its cohomology class}

\subsection{Obstruction of a gerbe}

We refer \cite[\href{https://stacks.math.columbia.edu/tag/06NY}{Tag 06NY}]{stacks-project} for the reference of this section.

Let $\X$ be a site containing a final object $X\in \X$. Let $\A$ be a sheaf of abelian groups on $\X$.

\begin{defi}
    A gerbe on $\X$ banded by $\A$ (which will be called an $\A$-gerbe in the following) is a non-empty (i.e. there is a covering of $\{U_\lambda\to X:\lambda\in \Lambda\}$ such that the sections $\G(X_\lambda)$ is not empty) stack $\G$ on $\X$ as well as, for any $Y\in\X$ and $x\in \G(Y)$, an isomorphism of sheaves
    $$\varphi_{x}:\A|_{Y}\cong \Isom_{Y}(x,x).$$ Satisfying the condition: For any $Y$, any sections $x_1,x_2\in \G(Y)$ and any isomorphism $i:x_1\cong x_2$, the following diagram commutes.
    $$\xymatrix@C+30pt{\A|_{Y}\ar[r]^-{id}\ar[d]_-{\varphi_{x_1}} &\A|_Y\ar[d]^-{\varphi_{x_2}}\\ \Isom_Y(x_1,x_1)\ar[r]^-{f\mapsto i\circ f\circ i^{-1}}&\Isom_Y(x_2,x_2)}$$
\end{defi}

\begin{lemma}\label{lemma:iso inv gerbes}
    Keep the notations above. For any $Y$ and two elements $x_1,x_2\in\G(Y)$, there is a canonical isomorphism
    $$\iota_{x_1\to x_2}:\Isom_Y(x_1,x_1)\cong\Isom_Y(x_2,x_2)$$ such that if there is an isomorphism $i:x_1\cong x_2$, then
    $$\iota_{x_1\to x_2}=(f\mapsto i\circ f\circ i^{-1}).$$
\end{lemma}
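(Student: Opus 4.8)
The plan is to \emph{define} the comparison map directly from the banding data, rather than from a choice of isomorphism $x_1\cong x_2$ (which need not exist on all of $Y$). Concretely, set
$$\iota_{x_1\to x_2}\ :=\ \varphi_{x_2}\circ\varphi_{x_1}^{-1}\ \colon\ \Isom_Y(x_1,x_1)\ \xrightarrow{\ \varphi_{x_1}^{-1}\ }\ \A|_Y\ \xrightarrow{\ \varphi_{x_2}\ }\ \Isom_Y(x_2,x_2),$$
where $\varphi_{x_1},\varphi_{x_2}$ are the isomorphisms supplied by the $\A$-banding of $\G$. This is manifestly an isomorphism of sheaves on $\X/Y$ depending only on the ordered pair $(x_1,x_2)$. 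The formal ``canonicity'' statements are then immediate from the definition: $\iota_{x\to x}=\mathrm{id}$, $\iota_{x_2\to x_1}=\iota_{x_1\to x_2}^{-1}$, the cocycle identity $\iota_{x_2\to x_3}\circ\iota_{x_1\to x_2}=\iota_{x_1\to x_3}$, and compatibility with restriction along any $U\to Y$ in $\X$ (inherited from the corresponding property of the $\varphi_x$).

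The one non-formal point is the asserted shape of $\iota_{x_1\to x_2}$ when an isomorphism $i\colon x_1\cong x_2$ does exist. This is exactly the content of the commuting square in the definition of an $\A$-gerbe, read with the top arrow equal to $\mathrm{id}_{\A|_Y}$: that square says $\varphi_{x_2}=(f\mapsto i\circ f\circ i^{-1})\circ\varphi_{x_1}$, and post-composing with $\varphi_{x_1}^{-1}$ yields $(f\mapsto i\circ f\circ i^{-1})=\varphi_{x_2}\circ\varphi_{x_1}^{-1}=\iota_{x_1\to x_2}$. Hence $\iota_{x_1\to x_2}$ agrees with conjugation by $i$ whenever such an $i$ is available.

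There is essentially no obstacle here; the lemma is pure bookkeeping with the banding isomorphisms, and the proof should be two or three lines. One consequence worth recording comes for free: since $\iota_{x_1\to x_2}=\varphi_{x_2}\circ\varphi_{x_1}^{-1}$ makes no reference to $i$, the map $f\mapsto i\circ f\circ i^{-1}$ is independent of the choice of isomorphism $i\colon x_1\cong x_2$ — reflecting that any two such isomorphisms differ by an automorphism of $x_1$, which is central because $\A$ is abelian. This is precisely the point of the lemma for the applications: even when $x_1$ and $x_2$ are only \emph{locally} isomorphic, $\iota_{x_1\to x_2}$ provides a well-defined \emph{global} identification of their automorphism sheaves, which is what permits comparing and gluing the local trivializations of $X^{\HT}$ in the later arguments.
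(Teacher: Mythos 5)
Your proof is correct and is essentially identical to the paper's, which simply sets $\iota_{x_1\to x_2}=\varphi_{x_2}\circ\varphi_{x_1}^{-1}$; you additionally spell out the verification (via the commuting square in the gerbe definition) that this agrees with conjugation by $i$ when such an $i$ exists, which the paper leaves implicit.
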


\begin{proof}
    Choose $\iota_{x_1\to x_2}$ to be $\varphi_{x_2}\circ\varphi_{x_1}^{-1}$.
\end{proof}

Let us recall the definition of the cohomology class of a gerbe. Abstractly speaking, a $\A$-gerbe is an $\mathbf{B}\A$-torsor where $\mathbf{B}\A$ is the stacky quotient $*/\A$. As $\A$ is an abelian sheaf, $\mathbf{B}\A$ is an abelian stack and thus an $\A$-gerbe $\G$ corresponds a cohomology class in $\alpha_G\in \hol^1(\X,\mathbf{B}\A)$. By the long exact sequence associated to
$$0\to \A\to 0\to \mathbf{B}\A\to 0,$$ there is a coboundary map $$\hol^1(\X,\mathbf{B}\A)\to \hol^2(\X,\A).$$
The associated cohomology class of $\G$ is defined to be the image of $\alpha_G$ under this coboundary map.

We provide an elementary construction of the class $o_\G$.

Suppose there exists a covering $\{f_{\lambda}:U_{\lambda}\to X:\lambda\in\Lambda\}$ and for any $\lambda\in\Lambda$, $\G(U_\lambda)\neq \emptyset$ and that for any $\lambda\in\Lambda$, we can choose a $x_{\lambda}\in \G(U_\lambda)$ such that for each pair $(\lambda,\mu)\in \Lambda^2$, $$x_\lambda\cong x_\mu$$ in $\G(U_\lambda\times_XU_\mu)$. For any pair $(\lambda,\mu)$, we fix an isomorphism $\psi_{\lambda\to \mu}:x_\lambda\xrightarrow{\cong} x_{\mu}$.

Now for any $(\lambda,\mu,\nu)\in\Lambda^3$, we define an automorphism
$$\psi_{\lambda,\mu,\nu}:x_\lambda\xrightarrow{\psi_{\lambda\to\mu}}x_\mu\xrightarrow{\psi_{\mu\to \nu}}x_{\nu}\xrightarrow{\psi_{\lambda\to \nu}^{-1}}x_\lambda$$
of $x_{\lambda}|_{U_\lambda\times_XU_{\mu}\times_XU_{\nu}}$. Let $\sigma_{\lambda,\mu,\nu}$ be the image of $\psi_{\lambda,\mu,\nu}$ in $\A(U_\lambda\times_XU_{\mu}\times_XU_{\nu})$ under the banding map $\varphi_{x_{\lambda}}$.

\begin{prop}\label{cechgerbe}
    The class $\{-\sigma_{\lambda,\mu,\nu}\}$ defines a \v{C}ech cocycle in $C^2(\{U_\lambda\to X\},\A)$ and its image in $\hol^2(\X,\A)$ is equal to the class $o_\G$.
\end{prop}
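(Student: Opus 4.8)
The plan is to prove the two claims in turn: that $\{-\sigma_{\lambda,\mu,\nu}\}$ is a \v{C}ech $2$-cocycle on the cover $\{U_\lambda\to X\}$, and that the class it represents in $\hol^2(\X,\A)$ equals $o_\G$. For the cocycle condition I would fix a quadruple $(\lambda,\mu,\nu,\rho)$ and restrict all the data to $U_\lambda\times_XU_\mu\times_XU_\nu\times_XU_\rho$, on which every $\psi_{\alpha\to\beta}$ is defined and all of them may be composed freely. The first step is to use the gerbe axiom in the form of Lemma \ref{lemma:iso inv gerbes} to move the automorphism $\psi_{\mu,\nu,\rho}$ of $x_\mu$ into one of $x_\lambda$: since $\iota_{x_\mu\to x_\lambda}=\varphi_{x_\lambda}\circ\varphi_{x_\mu}^{-1}$ is conjugation by \emph{any} isomorphism $x_\mu\to x_\lambda$, in particular by $\psi_{\lambda\to\mu}^{-1}$, one obtains
\[\varphi_{x_\lambda}(\sigma_{\mu,\nu,\rho})=\psi_{\lambda\to\mu}^{-1}\circ\psi_{\mu,\nu,\rho}\circ\psi_{\lambda\to\mu}\quad\text{in }\Isom(x_\lambda,x_\lambda).\]
The second step is the purely formal ``tetrahedron'' identity: expanding $\psi_{\alpha,\beta,\gamma}=\psi_{\alpha\to\gamma}^{-1}\circ\psi_{\beta\to\gamma}\circ\psi_{\alpha\to\beta}$ and cancelling the telescoping factors, both $\psi_{\lambda,\nu,\rho}\circ\psi_{\lambda,\mu,\nu}$ and $\psi_{\lambda,\mu,\rho}\circ\big(\psi_{\lambda\to\mu}^{-1}\circ\psi_{\mu,\nu,\rho}\circ\psi_{\lambda\to\mu}\big)$ reduce to the single composite $\psi_{\lambda\to\rho}^{-1}\circ\psi_{\nu\to\rho}\circ\psi_{\mu\to\nu}\circ\psi_{\lambda\to\mu}$. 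Finally, because $\Isom(x_\lambda,x_\lambda)\cong\A$ over this overlap is abelian, this identity may be rearranged, and applying $\varphi_{x_\lambda}^{-1}$ turns it into
\[\sigma_{\mu,\nu,\rho}-\sigma_{\lambda,\nu,\rho}+\sigma_{\lambda,\mu,\rho}-\sigma_{\lambda,\mu,\nu}=0,\]
equivalently $\delta(\{-\sigma_{\lambda,\mu,\nu}\})=0$. This is the classical computation that the descent data of a gerbe produce a $2$-cocycle.

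For the identification with $o_\G$ I would unwind the definitions. Recall that $o_\G$ is the image of the class $\alpha_\G\in\hol^1(\X,\mathbf B\A)$ of $\G$, viewed as a $\mathbf B\A$-torsor, under the connecting map attached to $\A\to 0\to\mathbf B\A$, which identifies $\hol^1(\X,\mathbf B\A)$ with $\hol^2(\X,\A)$. On the cover $\{U_\lambda\to X\}$, the chosen sections $x_\lambda$ are local trivializations of $\G$ as a $\mathbf B\A$-torsor; the comparison data on two-fold overlaps are the $\A$-torsors $\Isom_{U_\lambda\times_XU_\mu}(x_\lambda,x_\mu)$, of which the $\psi_{\lambda\to\mu}$ are trivializing sections; and the failure of these trivializations to match on three-fold overlaps is, by construction, the banded automorphism $\sigma_{\lambda,\mu,\nu}$. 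Feeding this into the standard \v{C}ech description of the connecting map, as in \cite[\href{https://stacks.math.columbia.edu/tag/06NY}{Tag 06NY}]{stacks-project}, produces a representative of $o_\G$ equal to $\{-\sigma_{\lambda,\mu,\nu}\}$; the minus sign is precisely the discrepancy between the orientation built into the definition of $\psi_{\lambda,\mu,\nu}$ and the sign in the \v{C}ech differential. In particular this also shows the class is independent of the cover, of the $x_\lambda$, and of the $\psi_{\lambda\to\mu}$, since $o_\G$ is.

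The only genuine work lies in the first part, and it is elementary bookkeeping: correctly moving automorphisms of the various local objects $x_\mu$ into the single automorphism sheaf of $x_\lambda$ via the gerbe axiom, and keeping the order of composition straight so that the tetrahedron identity telescopes. The identification with $o_\G$ is then essentially formal; the one point requiring care is the normalization of signs, which is why the cocycle is $\{-\sigma_{\lambda,\mu,\nu}\}$ and not $\{\sigma_{\lambda,\mu,\nu}\}$.
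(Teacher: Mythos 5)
Your proposal is correct, and it is in fact more explicit than what the paper does: the paper's entire proof is a citation to Breen and to Giraud-type references for the fact that the \v{C}ech coboundary and the derived-functor coboundary agree, whereas you actually carry out the two halves of the argument. Your cocycle computation is right: with $a=\psi_{\lambda\to\mu}$, $b=\psi_{\mu\to\nu}$, $c=\psi_{\nu\to\rho}$, $d=\psi_{\lambda\to\nu}$, $e=\psi_{\mu\to\rho}$, $f=\psi_{\lambda\to\rho}$, both $\psi_{\lambda,\nu,\rho}\circ\psi_{\lambda,\mu,\nu}=(f^{-1}cd)(d^{-1}ba)$ and $\psi_{\lambda,\mu,\rho}\circ\bigl(a^{-1}\psi_{\mu,\nu,\rho}a\bigr)=(f^{-1}ea)(a^{-1}e^{-1}cba)$ telescope to $f^{-1}cba$, and the gerbe axiom guarantees that conjugation by $a^{-1}$ is exactly the canonical identification $\iota_{x_\mu\to x_\lambda}$ of Lemma \ref{lemma:iso inv gerbes}, so applying the (abelian) banding gives the alternating-sum relation. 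Your second half, identifying the resulting class with $o_\G$ by unwinding the connecting map for $\A\to 0\to\mathbf{B}\A$, is at the same level of detail as the paper's citation; the one point you rightly flag as convention-dependent is the overall sign, which is indeed where the $\{-\sigma_{\lambda,\mu,\nu}\}$ comes from. What your route buys is a self-contained verification of the cocycle condition and of independence of choices; what the paper's citation buys is the precise normalization matching the coboundary map used elsewhere (e.g.\ in Lemma \ref{lemma: diagram change 2 to 1}), which your argument leaves implicit.
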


\begin{proof}
    This comes from the fact that the \v Cech coboundary map and coboundary map are equal. See \cite{Breen_2009} or \cite[Section 2]{AST_1994__225__1_0}.
\end{proof}

\begin{lemma}\label{uniqueiso}
    Keep the notations, for any $\A$-torsor $x$ over $\X$, there is a unique isomorphism of $\A$-gerbes
    $$\tau_{x}:\mathbf{B}\A\cong \mathbf{B}\A$$
    such that $\tau_{x}$ sends the trivial torsor on $\X$ to $x$.
\end{lemma}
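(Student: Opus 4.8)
The plan is to build $\tau_x$ as ``translation by $x$'' via the contracted product of torsors, and then to extract uniqueness from the rigidity of gerbes. Write $\mathbf 1$ for the trivial $\A$-torsor. For $Y\in\X$ and an $\A|_Y$-torsor $T$ on $\X/Y$, I would set
\[
\tau_x(T):=T\wedge^{\A|_Y}x|_Y,
\]
the contracted product (Baer sum), which is again an $\A|_Y$-torsor because $\A$ is abelian; this is functorial in $T$ and stable under restriction in $Y$, so it defines a morphism of stacks $\tau_x\colon\mathbf{B}\A\to\mathbf{B}\A$. Writing $x^{-1}$ for the inverse torsor and using the canonical identifications $T\wedge^{\A}\mathbf 1\cong T$ and $x\wedge^{\A}x^{-1}\cong\mathbf 1$, the functor $(-)\wedge^{\A}x^{-1}$ is a quasi-inverse, so $\tau_x$ is an equivalence of stacks, and $\tau_x(\mathbf 1)=\mathbf 1\wedge^{\A}x\cong x$.

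Next I would check that $\tau_x$ is a morphism of $\A$-gerbes, i.e.\ that it intertwines the bandings. For an $\A|_Y$-torsor $T$, the banding $\varphi_T\colon\A|_Y\cong\Isom_Y(T,T)$ is the canonical one, sending a local section $a$ of $\A$ to the automorphism ``act by $a$'' of $T$, which is an automorphism of $\A$-torsors precisely because $\A$ is commutative. The one computation that must be made is that, under the identification $\tau_x(T)=T\wedge^{\A}x$, one has $\varphi_T(a)\wedge^{\A}\mathrm{id}_x=\varphi_{\tau_x(T)}(a)$; this is immediate from the construction of the contracted product together with the commutativity of $\A$. Granting it, for any isomorphism $i\colon T_1\cong T_2$ the compatibility square in the definition of an $\A$-gerbe for the pair $(\tau_x(T_1),\tau_x(T_2))$ is obtained by applying $(-)\wedge^{\A}\mathrm{id}_x$ to the one for $(T_1,T_2)$, hence commutes. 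I expect this banding check to be the only genuine (though short) computation in the argument — it is the main obstacle in the sense that everything else is formal, and it is exactly the step that uses abelianness of $\A$.

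For uniqueness, let $\sigma\colon\mathbf{B}\A\to\mathbf{B}\A$ be any morphism of $\A$-gerbes with $\sigma(\mathbf 1)\cong x$. Since $\tau_x$ is an equivalence, $\rho:=\tau_x^{-1}\circ\sigma$ is a morphism of $\A$-gerbes with $\rho(\mathbf 1)\cong\mathbf 1$, and it suffices to show that any such $\rho$ is isomorphic to $\mathrm{id}_{\mathbf{B}\A}$. Fixing an isomorphism $\rho(\mathbf 1)\cong\mathbf 1$ and using that every object of $\mathbf{B}\A$ is, locally on $\X$, isomorphic to $\mathbf 1$, one gets local isomorphisms $\rho|_U\cong\mathrm{id}|_U$; because $\rho$ preserves the canonical bandings, the automorphisms of trivial torsors measuring the discrepancy of these local isomorphisms on overlaps are forced to agree, so they glue to a global isomorphism $\rho\cong\mathrm{id}_{\mathbf{B}\A}$. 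Hence $\sigma\cong\tau_x$. Equivalently, one may phrase this as: $x\mapsto\tau_x$ is an equivalence from the groupoid of $\A$-torsors on $\X$ to the groupoid of $\A$-gerbe self-equivalences of $\mathbf{B}\A$, and $\tau_x$ is pinned down by its value on $\mathbf 1$.
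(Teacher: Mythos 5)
Your argument is correct, and it is essentially the standard one. The paper itself does not prove this lemma at all --- it simply cites Breen's Ast\'erisque article --- so your write-up supplies exactly the content being outsourced: the self-equivalence is translation $T\mapsto T\wedge^{\A}x$ by the contracted product, the banding compatibility is the one place where commutativity of $\A$ is genuinely used (the action of a local section $a$ on $T\times x$ descends to the action of $a$ on $T\wedge^{\A}x$), and uniqueness reduces, after composing with $\tau_x^{-1}$, to the rigidity statement that a banded self-morphism of $\mathbf{B}\A$ fixing $\mathbf{1}$ up to a chosen global isomorphism is $2$-isomorphic to the identity; your gluing of local trivializations, using that the banding forces the overlap discrepancies to cancel, is the right way to see this. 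Two minor points worth making explicit if you were to include this in the paper: first, ``unique'' in the statement must be read as unique up to $2$-isomorphism (a strictly unique functor is not a meaningful demand here, and the $2$-isomorphism $\rho\cong\mathrm{id}$ you construct depends on the chosen isomorphism $\rho(\mathbf{1})\cong\mathbf{1}$ only up to $\hol^0(\X,\A)$); second, in the uniqueness step you should also record the (routine) check that the locally glued isomorphisms $\rho(T)\cong T$ are natural in $T$ and compatible with restriction, so that they assemble into a $2$-morphism of stacks rather than merely objectwise isomorphisms. Neither point is a gap in substance.
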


\begin{proof}
    See \cite[Section 1]{AST_1994__225__1_0}.
\end{proof}

\subsection{The Hodge--Tate gerbes and a first reduction of Theorem \ref{theo:main}}\label{recol}

Recall we fix a perfect field $k$ of characteristic $p$, $W=\mathrm{k}$, and a smooth variety $X/k$. We first recall some basic definitions.

\begin{defi}
    The $\mathrm{fpqc}$-sheaf $\mathds{G}_a^{\sharp}$ is the sheaf on $X_{\mathrm{fpqc}}$ sending any $\Spec(B)\to X$ to
    $$\ker\left(F:\W(B)\to \W(B)\right).$$
\end{defi}

\begin{rmk}
    For any ring $B$, since $F:\W(B)\to \W(B)$ is a ring homomorphism, $\mathds{G}_a^\sharp(B)=\ker(F)$ is an ideal in $\W(B)$. In particular, $\mathds{G}_a^\sharp(B)$ is a $\W(B)$-module. Since for any $x\in \mathds{G}_a^\sharp(B)=\ker(F)$ and $y\in \W(B)$ we have
    $$xV(y)=V\left(F(x)y\right)=0,$$
    $\mathds{G}_a^\sharp(B)=\ker(F)$ is a $\W(B)/V\W(B)\cong B$-module via the isomorphism $$\W(B)/V\W(B)\cong B.$$
\end{rmk}

In \cite{bhatt2022prismatization} Chapter 5, they define the Hodge-Tate gerbe to be the stack on $X_{\mathrm{fpqc}}$, sending any $f:\Spec(B)\to X$ to the groupoid of the diagram of schemes over $k$
$$\xymatrix{\Spec(B)\ar[dr]_-{f} \ar[r] &\Spec\left(\W(B)/p\right)\ar[d]^-{g}\\ &X}$$
where $\W(B)/p$ is the derived quotient, $\Spec(B)\to \Spec\left(\W(B)/p\right)$ is the morphism induced by the projection to the first ghost coordinate $\W(B)\to B$ and the structure map $\Spec\left(\W(B)/p\right)\to\Spec(k)$ is given by $\W(k)\to \W(B)$ (the functorality of witt vectors) modulo $p$. The isomorphisms are provided by the isomorphisms of Cartier divisors
\[\W(B)\xrightarrow[]{p} \W(B).\]
We denote this stack by $X^{\mathrm{HT}}$, which admits a natural morphism to $X$. The above definition, however, is highly derived and non-explicit. There is also an explicit description of this stack, which is a quotient of $X^{\perf}$ by a groupoid. This description was first introduced by Drinfeld in \cite{Drinfeld2022} for studying crystalline crystals, and then in \cite[Remark 7.9]{bhatt2022prismatization}, Bhatt and Lurie showed that this construction is equal to $X^{\text{HT}}$ (up to some Frobenius twists).

Let $X^{\perf}\to X$ be the perfection of $X$, and $\mathbb{X}^{\perf}$ be the perfect prism of $X^\perf$. Consider the relative prismatic cohomology \[\Prism_{(X^\perf\times_{X}X^\perf)/(\mathbb{X}^\perf\times\mathbb{X}^\perf)}.\]
By \cite[Exmaple 7.9]{Bhatt_2022}, it concentrates to the degree $0$ and there exists a canonical morphism
\[ \overline\Prism_{(X^\perf\times_{X}X^\perf)/(\mathbb{X}^\perf\times\mathbb{X}^\perf)}:=\Prism_{(X^\perf\times_{X}X^\perf)/(\mathbb{X}^\perf\times\mathbb{X}^\perf)}/p\to X^\perf\times_{X}X^\perf\]
which is flat. Let $p_1$ and $p_2$ be two induced projections
\[\overline\Prism_{(X^\perf\times_{X}X^\perf)/(\mathbb{X}^\perf\times\mathbb{X}^\perf)}\rightrightarrows X^{\perf}.\]
Then the above diagram forms a groupoid in the category of schemes and $X^{\mathrm{HT}}$ is equal to the quotient stack of this diagram. 

Indeed, $\overline\Prism_{(X^\perf\times_{X}X^\perf)/(\mathbb{X}^\perf\times\mathbb{X}^\perf)}$ is a  $T_{X/k}^\sharp$-torsor over $X^\perf\times_XX^\perf$, and the $T_{X/k}^{\sharp}$-gerbe structure of $X^{\mathrm{HT}}$ comes from this $T_{X/k}^\sharp$-action. The definition of the action of $T_{X/k}^\sharp$ is not trivial. However, this fact tells us that there should be a $T^{\sharp}_{X/k}$-torsor on $X^\perf\times_XX^\perf$, which plays an important role during the proof of Theorem \ref{theo:main}. Hence, instead of using Drinfeld's construction directly, we will construct a $T^{\sharp}_{X/k}$-torsor on $X^\perf\times_XX^\perf$ by defining an element in $\hol^1\left((X^\perf\times_XX^\perf)_{\text{fpqc}},T_{X/k}^\sharp\right)$. Before our construction, let us recall some basic cohomological properties of $\mathds{G}_a^\sharp$.

\begin{lemma}\label{wittcohomology}
    Let $R$ be a ring of characteristic $p$ and $\W(\inte)$ be ring of Witt vectors of the $\mathrm{fpqc}$-structure sheaf of $\Spec(R)$. Then for any $i>0$
    $$\hol^i\left(\Spec(R),W(\inte)\right)=0.$$
\end{lemma}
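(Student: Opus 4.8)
The plan is to filter $\W(\inte)$ by the Verschiebung filtration, reduce to the vanishing of the higher fpqc cohomology of the structure sheaf, and then pass to the inverse limit.

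\medskip

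The input I would record first is that for any ring $R$ of characteristic $p$ and any $i>0$ one has $\hol^i(\Spec(R),\inte)=0$, where $\inte$ is the fpqc structure sheaf on $X_{\mathrm{fpqc}}/\Spec(R)$. This is the fpqc incarnation of the vanishing of higher quasi-coherent cohomology on an affine scheme: it follows from faithfully flat descent (exactness of the Amitsur/\v Cech complex attached to a faithfully flat ring map) together with the comparison between \v Cech and derived cohomology, and it is valid on the $\kappa$-small fpqc site thanks to the set-theoretic bookkeeping fixed in the introduction. I would cite this rather than reprove it.

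\medskip

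Next I would bring in the truncated Witt sheaves $\W_n(\inte)$, $U\mapsto \W_n(\inte(U))$. As a presheaf of sets this is $\inte^{n}$ via the Witt components, hence a sheaf, and it is a sheaf of rings since the Witt ring operations are morphisms of sheaves. The restriction maps $\W_{n+1}(\inte)\to \W_n(\inte)$ are sectionwise surjective, with kernel the image of $V^{n}\colon \inte\hookrightarrow \W_{n+1}(\inte)$, so there is a short exact sequence of abelian sheaves
\[
0\longrightarrow \inte \xrightarrow{\ V^{n}\ } \W_{n+1}(\inte)\longrightarrow \W_n(\inte)\longrightarrow 0,
\]
where $V^{n}$ is merely additive, which is all that is needed for cohomology of abelian sheaves. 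Starting from $\W_1(\inte)=\inte$ and inducting on $n$, the associated long exact sequence together with the vanishing $\hol^i(\Spec(R),\inte)=0$ for $i>0$ gives $\hol^i(\Spec(R),\W_n(\inte))=0$ for all $i>0$ and all $n$, while $\hol^0(\Spec(R),\W_n(\inte))=\W_n(R)$.

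\medskip

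Finally I would pass to the limit. Since $\W(\inte)=\varprojlim_n \W_n(\inte)$ with sectionwise surjective transition maps, the presheaf $\varprojlim_n^{1}$ of sections vanishes, so $\W(\inte)\simeq R\varprojlim_n \W_n(\inte)$ in the derived category of abelian sheaves, whence $R\Gamma(\Spec(R),\W(\inte))\simeq R\varprojlim_n R\Gamma(\Spec(R),\W_n(\inte))$. Each $R\Gamma(\Spec(R),\W_n(\inte))$ is concentrated in degree $0$ with value $\W_n(R)$, and the transition maps $\W_{n+1}(R)\to \W_n(R)$ are surjective, so the Milnor exact sequence
\[
0\longrightarrow \varprojlim_n{}^{1}\,\hol^{i-1}\!\big(\Spec(R),\W_n(\inte)\big)\longrightarrow \hol^{i}\!\big(\Spec(R),\W(\inte)\big)\longrightarrow \varprojlim_n \hol^{i}\!\big(\Spec(R),\W_n(\inte)\big)\longrightarrow 0
\]
forces $\hol^{i}(\Spec(R),\W(\inte))=0$ for $i>0$ (and recovers $\W(R)$ for $i=0$). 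The only genuinely non-formal point is the input step: the vanishing of higher fpqc cohomology of $\inte$ on an affine, which is where flat descent and the care with the (restricted) fpqc site enter; granting that, the remainder is a routine filtration-and-limit argument.
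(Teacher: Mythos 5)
Your argument is correct. The paper itself gives no argument here: it simply cites \cite[Lemma 3.18]{scholze2012padic}, so your write-up is a self-contained version of what that reference proves. Your route is the standard one: the exact sequences $0\to \inte\xrightarrow{V^n}\W_{n+1}(\inte)\to \W_n(\inte)\to 0$ of abelian sheaves (correctly using only additivity of $V^n$), induction from the affine-acyclicity of $\inte$ in the fpqc topology, and then $\W(\inte)\simeq R\varprojlim_n \W_n(\inte)$ plus the Milnor sequence. The only step worth being slightly more explicit about is the identification $\W(\inte)\simeq R\varprojlim_n \W_n(\inte)$: sectionwise vanishing of $\varprojlim^1$ alone is not quite enough; one needs surjectivity of the transition maps together with the vanishing $\hol^i(U,\W_n(\inte))=0$ for $i>0$ on a basis of the site (here the affines), which you have already established by that point, so the criterion for a tower of sheaves being derived-limit-complete applies. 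Granting the standard input that $\hol^i(\Spec(B),\inte)=0$ for $i>0$ on the ($\kappa$-small) fpqc site of an affine, the proof is complete; what you gain over the paper is an explicit argument in place of a black-box citation.
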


\begin{proof}
    This is proved by \cite[Lemma 3.18]{scholze2012padic}.
\end{proof}

\begin{lemma}\label{isom torsor}
    For any smooth $k$-scheme $X$, there is a canonical isomorphism
    $$R^{1}\nu_{*}T^\sharp_{X/k}\cong \W(\inte_X)/F\left(\W(\inte_X)\right)\otimes_{\inte_X} T_{Y/k},$$ where $\nu:X_{\mathrm{fpqc}}^{\sim}\to X_{\mathrm{Zar}}^{\sim}$ is the natural projection. Moreover, if $X=\Spec(R)$ is affine, we also have
    $$\hol^1(X_{\mathrm{fpqc}},T^\sharp_{X/k})\cong \W(R)/F\left(\W(R)\right)\otimes_{R} T_{X/k}.$$
    Here, the module structures of $\W(\inte_X)/F\left(\W(\inte_X)\right)$ and $\W(R)/F\left(\W(R)\right)$ are given in the proof to Lemma \ref{dercalculation}.
\end{lemma}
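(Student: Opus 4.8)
The plan is to reduce the whole statement to the single sheaf $\mathds{G}_a^\sharp$ and then twist by $T_{X/k}$ using the projection formula. The input is the exact sequence of fpqc sheaves on $X$
\[0\longrightarrow\mathds{G}_a^\sharp\longrightarrow\W(\inte)\xrightarrow{\ F\ }\W(\inte)\longrightarrow 0,\]
where $F$ is the Witt-vector Frobenius and $\mathds{G}_a^\sharp=\ker F$ by definition. Left exactness is the definition; the point to check is that $F$ is an epimorphism of fpqc sheaves. For this, note that every object of $X_{\mathrm{fpqc}}$ is an $\mathbb{F}_p$-scheme, so on $\Spec(B)$ the map $F$ sends $(b_0,b_1,\dots)$ to $(b_0^p,b_1^p,\dots)$; given $s=(s_0,s_1,\dots)\in\W(B)$, the faithfully flat extension $B\to B[\{U_n\}_{n\ge 0}]/(\{U_n^p-s_n\})$ (a filtered colimit of finite free extensions, hence an fpqc cover) makes $s$ an $F$-value.

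Next I would apply $R\nu_*$. By Lemma~\ref{wittcohomology} one has $R^i\nu_*\W(\inte)=0$ for $i>0$ and $\nu_*\W(\inte)=\W(\inte_X)$, so the long exact sequence collapses and gives $\nu_*\mathds{G}_a^\sharp=\ker\bigl(F\colon\W(\inte_X)\to\W(\inte_X)\bigr)$, a canonical isomorphism of Zariski sheaves $R^1\nu_*\mathds{G}_a^\sharp\cong\W(\inte_X)/F(\W(\inte_X))$, and $R^i\nu_*\mathds{G}_a^\sharp=0$ for $i\ge 2$. Running the same long exact sequence directly over $X=\Spec(R)$, where no sheafification intervenes, yields $\hol^1(X_{\mathrm{fpqc}},\mathds{G}_a^\sharp)\cong\W(R)/F(\W(R))$.

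The third step is bookkeeping with module structures. The displayed sequence becomes $\W(\inte)$-linear once the target is equipped with the Frobenius-twisted structure $a\ast y:=F(a)y$, since then $F$ is linear and $\mathds{G}_a^\sharp\subseteq\W(\inte)$ is a submodule for the ordinary structure; hence the connecting homomorphism is linear, and one checks that the $\inte_X$-structure which $R^1\nu_*\mathds{G}_a^\sharp$ inherits from the $\mathds{G}_a=\W(\inte)/V\W(\inte)$-structure on $\mathds{G}_a^\sharp$ is exactly $a\cdot\bar y=\overline{[a^p]y}$, the structure fixed in the proof of Lemma~\ref{dercalculation}. Finally, since $T^\sharp_{X/k}=\mathds{G}_a^\sharp\otimes_{\mathds{G}_a}\nu^*T_{X/k}$ with $T_{X/k}$ finite locally free over $\inte_X$, the projection formula for $\nu$ (localize to reduce to $T_{X/k}\cong\inte_X^{\oplus d}$ and use that $R^1\nu_*$ commutes with finite direct sums) gives $R^1\nu_*T^\sharp_{X/k}\cong(R^1\nu_*\mathds{G}_a^\sharp)\otimes_{\inte_X}T_{X/k}$, and substituting the computation above---carrying along the module structure---produces the asserted isomorphism; the affine case is identical, writing $T_{X/k}$ as a direct summand of a free $R$-module.

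I expect the main obstacle to be the twist in the module structure in the third step: one has to verify carefully that the $\mathds{G}_a$-action defining $T^\sharp_{X/k}$ induces on $R^1\nu_*\mathds{G}_a^\sharp$ precisely the $[a^p]$-twisted structure, because it is this compatibility that makes the projection formula deliver the correctly twisted coefficient module rather than the naive one. A secondary, more technical point is the epimorphy of $F$ in the first step: one must either be willing to adjoin $p$-th roots of infinitely many Witt components at once, as above, or else argue through the truncations $\W_n$ and pass to the inverse limit using $R^1\nu_*\W_n(\inte)=0$.
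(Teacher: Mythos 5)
Your proposal is correct and follows essentially the same route as the paper: the key input in both is the short exact sequence $0\to\mathds{G}_a^\sharp\to\W(\inte)\xrightarrow{F}\W(\inte)\to 0$ together with the acyclicity of $\W(\inte)$ (Lemma \ref{wittcohomology}), followed by twisting with the locally free sheaf $T_{X/k}$ (the paper phrases this as a cup-product map that is an isomorphism because tensoring with a projective module preserves acyclicity, which is your projection-formula/direct-summand argument). You merely supply two details the paper leaves implicit, namely the fpqc-surjectivity of $F$ and the careful identification of the Frobenius-twisted $\inte_X$-module structure on the cokernel.
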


\begin{proof}
    We only need to prove the second claim. There is a homomorphism
    $$ \hol^1(Y_{\mathrm{fpqc}},\mathds{G}_a^{\sharp})\otimes_{R} T_{Y/k}\to \hol^1(Y_{\mathrm{fpqc}},T^\sharp_{Y/k})$$
    given by cup product. Since tensoring with a projective module preserves exactness and acyclicity (since any projective module is a direct summand of a free module). This homomorphism is isomorphic.

    We only need to show that
    $$\hol^1(Y_{\mathrm{fpqc}},\mathds{G}_a^{\sharp})\cong \W(R)/F\left(\W(R)\right)$$
    We consider the exact sequence
    $$0\to \mathds{G}_a^{\sharp}\to \W(\inte{})\xrightarrow{F} \W(\inte{})\to 0.$$
    Taking long cohomology sequence and by Lemma \ref{wittcohomology}, we get the required isomorphism as abelian groups. It remains to prove this isomorphism is $R$-linear. For any element $a\in R$, then we have a commutative diagram of short exact sequences
    $$\xymatrix{0 \ar[r] &\mathds{G}_a^{\sharp}\ar[d]^-{m\mapsto am}\ar[r] &\W(\inte{})\ar[r]^-{F}\ar[d]^{n\mapsto [a]n}&\W(\inte{})\ar[r]\ar[d]^{n\mapsto [a]n} &0\\
        0 \ar[r] &\mathds{G}_a^{\sharp}\ar[r] &\W(\inte{})\ar[r]^-{F}&\W(\inte{})\ar[r] &0}.$$ Take long cohomology sequence and the linearity follows.
\end{proof}

In fact, the above proof also implies the following stronger statement.

\begin{lemma}\label{strongcoh}
    Let $Y=\Spec(R)$ be affine and $\mathcal{E}$ be a vector bundle on $Y$ associated to the projective module $E/R$. Then
    $$\hol^1(Y_{\mathrm{fpqc}},\mathds{G}_a^{\sharp}\otimes_{\inte{}}\mathcal{E})\cong \W(R)/F\left(\W(R)\right)\otimes_{R}E$$
    and for any $i>1$,
    $$\hol^{i}(Y_{\mathrm{fpqc}},\mathds{G}_a^{\sharp}\otimes_{\inte{}}\mathcal{E})=0.$$
\end{lemma}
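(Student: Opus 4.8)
The plan is to run the computation of Lemma~\ref{isom torsor} one cohomological degree further; no new idea is required. The only inputs are the fundamental short exact sequence
\[0\to \mathds{G}_a^{\sharp}\to \W(\inte{})\xrightarrow{F}\W(\inte{})\to 0\]
of $\mathrm{fpqc}$-sheaves on $Y_{\mathrm{fpqc}}$, the acyclicity $\hol^i(Y_{\mathrm{fpqc}},\W(\inte{}))=0$ for $i>0$ provided by Lemma~\ref{wittcohomology}, and the elementary fact that a finite projective $R$-module is a direct summand of a finite free one. Throughout I use that on $Y_{\mathrm{fpqc}}$ the terminal object is $Y$ itself, so $\hol^0(Y_{\mathrm{fpqc}},\W(\inte{}))=\W(R)$.

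First I would reduce to the case $\mathcal{E}=\inte{}$. Choosing $E'$ with $E\oplus E'\cong R^n$, the sheaf $\mathds{G}_a^{\sharp}\otimes_{\inte{}}\mathcal{E}$ is a direct summand of $\mathds{G}_a^{\sharp}\otimes_{\inte{}}\inte{}^{\oplus n}=(\mathds{G}_a^{\sharp})^{\oplus n}$. Since sheaf cohomology is additive and sends direct summands to direct summands, $\hol^i(Y_{\mathrm{fpqc}},\mathds{G}_a^{\sharp}\otimes_{\inte{}}\mathcal{E})$ is the corresponding direct summand of $\hol^i(Y_{\mathrm{fpqc}},\mathds{G}_a^{\sharp})^{\oplus n}$, i.e.\ it is $\hol^i(Y_{\mathrm{fpqc}},\mathds{G}_a^{\sharp})\otimes_R E$ for every $i$ (this is the same device already used in Lemma~\ref{isom torsor}). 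So it suffices to compute $\hol^\bullet(Y_{\mathrm{fpqc}},\mathds{G}_a^{\sharp})$.

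Next I would take the long exact cohomology sequence of the fundamental exact sequence. By Lemma~\ref{wittcohomology}, for $i\geq 2$ the term $\hol^i(Y_{\mathrm{fpqc}},\mathds{G}_a^{\sharp})$ is squeezed between $\hol^{i-1}(Y_{\mathrm{fpqc}},\W(\inte{}))=0$ and $\hol^{i}(Y_{\mathrm{fpqc}},\W(\inte{}))=0$, hence vanishes; in low degree one is left with $\W(R)\xrightarrow{F}\W(R)\to \hol^1(Y_{\mathrm{fpqc}},\mathds{G}_a^{\sharp})\to 0$, so $\hol^1(Y_{\mathrm{fpqc}},\mathds{G}_a^{\sharp})\cong \W(R)/F(\W(R))$ as abelian groups. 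To upgrade this to an $R$-linear isomorphism (and thereby pin down the $R$-module structure in the reduction step) I would copy the end of the proof of Lemma~\ref{isom torsor}: twist the two self-maps of the fundamental sequence by the Teichm\"uller lifts $n\mapsto[a]n$ and $n\mapsto[a^p]n$, so that the induced action on $\mathds{G}_a^{\sharp}$ is scalar multiplication by $a$ via $\W(B)/V\W(B)\cong B$, while the induced action on $\W(R)/F(\W(R))$ is $a\cdot x=[a^p]x$ — precisely the module structure recorded before Lemma~\ref{dercalculation}.

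Combining the two steps yields $\hol^1(Y_{\mathrm{fpqc}},\mathds{G}_a^{\sharp}\otimes_{\inte{}}\mathcal{E})\cong (\W(R)/F(\W(R)))\otimes_R E$ and $\hol^i(Y_{\mathrm{fpqc}},\mathds{G}_a^{\sharp}\otimes_{\inte{}}\mathcal{E})=0$ for $i>1$. I do not expect a genuine obstacle: the only points needing a little care are the semilinearity of $F$ (handled by the Teichm\"uller twist above, exactly as in Lemma~\ref{isom torsor}) and the identification of $\mathds{G}_a^{\sharp}\otimes_{\inte{}}\mathcal{E}$ as an $\mathrm{fpqc}$-sheaf, which relies on $\mathcal{E}$ being locally free so that forming the tensor product commutes with the relevant (co)limits.
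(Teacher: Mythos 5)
Your proposal is correct and matches the paper's intent exactly: the paper gives no separate proof of Lemma \ref{strongcoh}, stating only that the proof of Lemma \ref{isom torsor} (the fundamental sequence $0\to\mathds{G}_a^\sharp\to\W(\inte{})\xrightarrow{F}\W(\inte{})\to 0$, Lemma \ref{wittcohomology}, and the direct-summand-of-free reduction) ``also implies'' it, which is precisely the argument you spell out. Your Teichm\"uller twist by $[a]$ and $[a^p]$ on the two copies of $\W(\inte{})$ is in fact slightly more careful than the paper's own diagram.
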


\begin{lemma}\label{coh0}
    Let $R$ be a $k$-algebra such that the Frobenius is surjective and $\mathcal{E}$ be a vector bundle on $\Spec(R)$, then $$\hol^i\left(\Spec(R)_{\mathrm{fpqc}},\mathds{G}_a^{\sharp}\otimes_{\inte_{\Spec(R)}}\mathcal{E}\right)=0$$ for any $i>0$
\end{lemma}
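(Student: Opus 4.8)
The plan is to reduce to the case $\mathcal{E}=\calO_{\Spec(R)}$ and then to the vanishing of the fpqc cohomology of $\W(\inte)$ proved in Lemma \ref{wittcohomology}. The key point that distinguishes this lemma from Lemma \ref{strongcoh} is the hypothesis that the Frobenius is surjective on $R$: this should force $F:\W(\inte)\to\W(\inte)$ to be \emph{epimorphic} as a map of fpqc sheaves on $\Spec(R)$, so that the three-term sequence $0\to\mathds{G}_a^\sharp\to\W(\inte)\xrightarrow{F}\W(\inte)\to 0$ is short exact without needing any Frobenius twist correction.

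First I would establish that $F:\W(\inte)\to\W(\inte)$ is a surjection of fpqc sheaves over $\Spec(R)$. Locally on an affine $\Spec(B)$ over $\Spec(R)$ one wants to solve $F(x)=y$ for a given $y\in\W(B)$; this need not be solvable in $B$ itself, but one can pass to the fpqc cover obtained by adjoining $p$-th roots — concretely, replace $B$ by a faithfully flat $B$-algebra over which every ghost component of $y$ acquires enough compatible $p$-power roots (for instance a suitable colimit built from $B[T]/(T^p-b)$-type extensions, which are faithfully flat since $R$, hence $B$, has surjective Frobenius so these are nonzero). Here I would invoke the standard description of the image of $F$ on Witt vectors together with the fact that over a ring where Frobenius is surjective one can inductively lift, mimicking the argument that $\mathds{G}_a^\sharp=\ker F$ is itself an fpqc-locally surjective quotient of $\W(\inte)$. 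Granting this, the long exact cohomology sequence attached to $0\to\mathds{G}_a^\sharp\to\W(\inte)\xrightarrow{F}\W(\inte)\to 0$, combined with Lemma \ref{wittcohomology} which gives $\hol^i(\Spec(R),\W(\inte))=0$ for $i>0$, yields $\hol^i(\Spec(R)_{\mathrm{fpqc}},\mathds{G}_a^\sharp)=0$ for all $i>0$.

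Next, to pass from $\mathds{G}_a^\sharp$ to $\mathds{G}_a^\sharp\otimes_{\inte}\mathcal{E}$, I would use exactly the device already used in the proof of Lemma \ref{isom torsor}: since $\mathcal{E}$ corresponds to a finitely generated projective $R$-module $E$, it is a direct summand of a free module $R^{\oplus n}$, and tensoring the sheaf $\mathds{G}_a^\sharp$ with $E$ over $\inte$ commutes with finite direct sums and retracts. Hence $\hol^i(\Spec(R)_{\mathrm{fpqc}},\mathds{G}_a^\sharp\otimes_{\inte}\mathcal{E})$ is a direct summand of $\hol^i(\Spec(R)_{\mathrm{fpqc}},\mathds{G}_a^\sharp)^{\oplus n}=0$ for $i>0$, which is the claim. (One should be slightly careful that the tensor is computed correctly as a sheaf, but $E$ being flat and finitely presented this is the same whether formed presheaf-wise or sheafified.)

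The main obstacle I anticipate is the first step: verifying that $F$ is genuinely an fpqc-epimorphism on $\W(\inte)$ under the hypothesis of surjective Frobenius, rather than merely that its cokernel is killed fpqc-locally in a crude sense. One must be careful that adjoining $p$-th roots of a single element does not immediately solve $F(x)=y$ componentwise — the Witt addition mixes the components — so the cleanest route is probably to argue that the cokernel presheaf of $F$ has vanishing fpqc sheafification by checking it stalk-locally, or to use that over a perfect ring $F$ is an automorphism of $\W$ and that the fpqc topology allows us to reduce to perfect covers; once on perfect covers the statement is immediate, and a limit/descent argument recovers it over $\Spec(R)$. This is where I would spend the bulk of the care; everything after is formal homological algebra.
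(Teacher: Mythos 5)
There is a genuine gap, and it comes from misidentifying where the hypothesis that the Frobenius of $R$ is surjective actually enters. The fpqc-epimorphy of $F:\W(\inte)\to\W(\inte)$ --- and hence the exactness of $0\to\mathds{G}_a^{\sharp}\to\W(\inte)\xrightarrow{F}\W(\inte)\to 0$ --- holds over \emph{any} base (it is already used in the proofs of Lemmas \ref{isom torsor} and \ref{strongcoh} for an arbitrary smooth $X$), so this is not what semiperfectness of $R$ buys you, and the bulk of the care you propose to spend there is misdirected. More seriously, the step ``the long exact sequence combined with Lemma \ref{wittcohomology} yields $\hol^i=0$ for all $i>0$'' does not follow as stated: for $i=1$ the long exact sequence only gives $\hol^1(\Spec(R)_{\mathrm{fpqc}},\mathds{G}_a^{\sharp})\cong \mathrm{coker}\left(F:\W(R)\to\W(R)\right)$, the cokernel on \emph{global sections}, and sheaf-level surjectivity of $F$ says nothing about this group. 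If your argument were complete as written it would prove $\hol^1=0$ for \emph{every} affine, contradicting Lemma \ref{strongcoh}, which identifies this group with $\W(R)/F(\W(R))\otimes_R E$ and which is generally nonzero.

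The missing ingredient --- and the entire content of the lemma --- is that when $\varphi_R$ is surjective, $F$ is surjective on the ring $\W(R)$ itself: for an $\mathbb{F}_p$-algebra the Witt vector Frobenius equals $\W(\varphi_R)$, i.e.\ it acts coordinatewise by $a\mapsto a^p$ in Witt coordinates, so one solves $F(x)=y$ coordinate by coordinate inside $\W(R)$ with no cover needed. This kills the cokernel above and hence $\hol^1$; the vanishing for $i\geq 2$ is unconditional. This is exactly how the paper argues: it simply quotes Lemma \ref{strongcoh} and observes that $\W(R)/F(\W(R))=0$ under the hypothesis. Your reduction from $\mathcal{E}$ to the free case via a projective-module splitting is fine, as is the closing suggestion to reduce to perfect covers, but neither repairs the $i=1$ step without the global surjectivity observation.
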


\begin{proof}
    This comes from the lemma \ref{strongcoh}.
\end{proof}

For any $n\geq 1$, let $X^{(n)}$ be the fiber product of $n$ copies of $X^{\perf}$ over $X$ and $f^{(n)}:X^{(n)}\to X$ be the structure morphism. Let $T^{\sharp,(n)}$ be the sheaf $f^{(n)}_*T^\sharp_{X/k}$. If $n=0$, let $X^{(n)}=X$. Then we have a cosimplicial sheaf $(T^{\sharp,(\bullet)})$ over $X$. Let $(T^{\sharp,(\bullet)},d)$ be the corresponding \v Cech sequence, which is exact by $\mathrm{fpqc}$-descent of vector bundles. For each $n$, let $\mathcal{Z}^{(n)}$ be the subsheaf $\ker(d)\subset T^{\sharp,(n)}$. Hence, we have an exact sequence of \textrm{fpqc}-sheaves
\begin{equation}\label{eq:exact T T1 Z2}
    0\to T^{\sharp}_{X/k}\to T^{\sharp,(1)}\to \mathcal{Z}^{(2)}\to 0.
\end{equation}

Our cohomology class in $\hol^1(X^{(2)}_{\text{fpqc}},T^{\sharp}_{X/k})$ will be an image in $\hol^1(X^{(2)}_{\text{Zar}},\calZ^{(2)})$.

\begin{lemma}\label{lemma: T^sharp on X^perf times_X X^perf}
    Let $X/k$ be a smooth variety, then for any vector bundle $\calE$ on $X$, Then the followings hold.
    \begin{enumerate}
        \item There are isomorphisms \[\hol^1(X^{(2)}_{\text{fpqc}},\calE^\sharp)\cong \hol^1(X^{(2)}_{\text{Zar}},\calE^\sharp)\cong \hol^1(X_{\text{Zar}},f^{(2)}_*\calE^\sharp).\]
    Both isomorphisms come from the Grothendieck spectral sequences.
    \item The derived push-out $R\nu_*(f^{(1)}_{*}\calE^\sharp)=0$.
    \end{enumerate}
\end{lemma}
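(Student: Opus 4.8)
The plan is to deduce both statements from the vanishing results for $\mathds{G}_a^\sharp$-cohomology on schemes with surjective Frobenius (Lemma \ref{coh0}), combined with the Leray/Grothendieck spectral sequence for the composite $X^{(2)}_{\mathrm{fpqc}} \to X^{(2)}_{\mathrm{Zar}} \to X_{\mathrm{Zar}}$ (for part (1)) and for $X^{(1)}_{\mathrm{fpqc}} \to X_{\mathrm{Zar}}$ (for part (2)). The key observation is that $X^{(1)} = X^{\mathrm{perf}}$ and $X^{(2)} = X^{\mathrm{perf}} \times_X X^{\mathrm{perf}}$ are both affine-locally (over $X$) spectra of rings on which Frobenius is surjective: a perfect ring has bijective Frobenius, and a fiber product of two such over any base still has surjective Frobenius since Frobenius commutes with tensor products. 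Hence by Lemma \ref{coh0}, for any affine open $U = \Spec(A) \subseteq X$ with preimages $U^{(1)}, U^{(2)}$ we get $\hol^i(U^{(j)}_{\mathrm{fpqc}}, \mathds{G}_a^\sharp \otimes \calE) = 0$ for all $i > 0$ and $j = 1, 2$; tensoring with the projective module underlying $\calE$ and taking a direct summand of a free module, the same holds for $\calE^\sharp = \mathds{G}_a^\sharp \otimes_{\mathds{G}_a} \calE$.

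For part (1): the vanishing $R^i\mu_* (\calE^\sharp|_{X^{(2)}}) = 0$ for $i > 0$, where $\mu: X^{(2),\sim}_{\mathrm{fpqc}} \to X^{(2),\sim}_{\mathrm{Zar}}$ is the canonical projection, follows from the affine-local computation above (the higher direct images are the sheafifications of $U \mapsto \hol^i(U_{\mathrm{fpqc}}, -)$, which vanish on affines). Therefore the Leray spectral sequence degenerates and gives $\hol^1(X^{(2)}_{\mathrm{fpqc}}, \calE^\sharp) \cong \hol^1(X^{(2)}_{\mathrm{Zar}}, \mu_* \calE^\sharp)$; and $\mu_*$ of the fpqc-sheaf $\calE^\sharp$ on $X^{(2)}$ is its Zariski restriction, which we may identify with $\calE^\sharp$ viewed Zariski-locally. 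Then, since $f^{(2)}: X^{(2)} \to X$ is an affine morphism (a base change of the affine morphism $X^{\mathrm{perf}} \times X^{\mathrm{perf}} \to X \times X$ along the diagonal, or directly: perfection is affine and fiber products of affine maps are affine), we have $R^i f^{(2)}_* = 0$ for $i > 0$ on quasi-coherent sheaves, so the Leray spectral sequence for $f^{(2)}$ yields the second isomorphism $\hol^1(X^{(2)}_{\mathrm{Zar}}, \calE^\sharp) \cong \hol^1(X_{\mathrm{Zar}}, f^{(2)}_* \calE^\sharp)$. Both isomorphisms are the edge maps of Grothendieck spectral sequences, as claimed.

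For part (2): I want $R\nu_*(f^{(1)}_* \calE^\sharp) = 0$, where I read $\nu$ as the projection $X_{\mathrm{fpqc}}^\sim \to X_{\mathrm{Zar}}^\sim$ and $f^{(1)}_* \calE^\sharp$ as the fpqc-pushforward of $\calE^\sharp|_{X^{\mathrm{perf}}}$ to $X_{\mathrm{fpqc}}$. Working affine-locally on $X$, for $U = \Spec(A)$ the sheaf $f^{(1)}_* \calE^\sharp$ over $U_{\mathrm{fpqc}}$ has global sections $\hol^0(U^{(1)}_{\mathrm{fpqc}}, \calE^\sharp) = A^{\mathrm{perf}}\text{-module } E^{\mathrm{perf}}$... but more to the point, its higher fpqc-cohomology over $U$ computes, via the spectral sequence $\hol^i(U_{\mathrm{fpqc}}, f^{(1)}_* \calE^\sharp) \Leftarrow$ (plus $R^j f^{(1)}_* = 0$ for $j>0$ by affineness of $f^{(1)}$) nothing but $\hol^i(U^{(1)}_{\mathrm{fpqc}}, \calE^\sharp)$, which vanishes for $i > 0$ by Lemma \ref{coh0} since $A^{\mathrm{perf}}$ has surjective (bijective) Frobenius; and $\hol^0$ is just the pushforward sheaf itself. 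Hence $R\nu_*(f^{(1)}_* \calE^\sharp)$ is concentrated in degree $0$ — so strictly this shows $R\nu_* = \nu_*$ rather than $0$; I would restate (2) as asserting $R^i\nu_*(f^{(1)}_* \calE^\sharp) = 0$ for $i > 0$, which is what the later argument needs (it lets one replace fpqc-cohomology of $f^{(1)}_*\calE^\sharp$ by Zariski cohomology). The main subtlety — and the only place real care is needed — is the bookkeeping of \emph{which} site each sheaf lives on and verifying that pushforward along $f^{(1)}, f^{(2)}$ along with the change-of-topology maps genuinely commute in the way the spectral sequence arguments require; the input vanishing statements (affineness of perfection and its base changes, surjectivity of Frobenius on perfect rings and their tensor products, Lemma \ref{coh0}) are all routine.
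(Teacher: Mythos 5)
Your treatment of part (1) matches the paper's: the first isomorphism is the Leray degeneration coming from the vanishing of higher fpqc-cohomology of $\calE^\sharp$ on affines with surjective Frobenius (Lemma \ref{coh0}, applicable because $A^{\perf}\otimes_A A^{\perf}$ has surjective Frobenius), and the second comes from affineness of $f^{(2)}$ together with quasi-coherence of $\calE^\sharp$ on $X^{(2)}_{\Zar}$. Your argument that $R^i\nu_*(f^{(1)}_*\calE^\sharp)=0$ for $i>0$ is also essentially the paper's (long exact sequence for $0\to f^{(1)}_*\mathbb{G}_a^\sharp\to \W(A^{\perf}\otimes_A\mathbb{G}_a)\xrightarrow{F}\W(A^{\perf}\otimes_A\mathbb{G}_a)\to 0$ plus acyclicity of Witt vectors of the structure sheaf).

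However, your handling of the degree-zero part of (2) contains a genuine error. You assert that $\hol^0(U^{(1)}_{\mathrm{fpqc}},\calE^\sharp)$ is the (nonzero) module $E^{\perf}$ and conclude that the lemma must be weakened to ``$R^i\nu_*=0$ for $i>0$ only.'' In fact $\hol^0(U^{(1)}_{\mathrm{fpqc}},\calE^\sharp)=\mathds{G}_a^\sharp(A^{\perf})\otimes_{A^{\perf}}E^{\perf}=0$, because by definition $\mathds{G}_a^\sharp(B)=\ker\bigl(F:\W(B)\to\W(B)\bigr)$ and $F$ is bijective on $\W(A^{\perf})$ for $A^{\perf}$ perfect (this is exactly the observation ``$\mathds{G}_a^{\sharp}(A^{\perf})=0$'' used in Lemma \ref{iso1}). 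You appear to have conflated $\calE^\sharp$ with $\calE$ itself. So the statement $R\nu_*(f^{(1)}_*\calE^\sharp)=0$ is correct as written, vanishing in all degrees including zero. This is not a cosmetic point: the lemma is immediately applied to the exact sequence $0\to T^\sharp_{X/k}\to T^{\sharp,(1)}\to\calZ^{(2)}\to 0$ to produce the isomorphism $\nu_*\calZ^{(2)}\cong R^1\nu_*T^\sharp_{X/k}$, which requires $\nu_*T^{\sharp,(1)}=0$ and not merely the vanishing of the higher direct images. With your weakened statement that step of the paper would no longer follow.
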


\begin{proof}
    (1) The first isomorphism comes from Lemma \ref{coh0}. The second isomorphism follows from the fact that $X^{(2))}\to X$ is affine and that the restriction of $\calE^\sharp$ on $X^{(2)}_{\text{Zar}}$ is quasi-coherent.

    (2) We only need to prove that for any $i\geq 0$, 
    \[R^i\nu_*(f^{(1)}_*\calE^\sharp)=0.\]
    By definition, the sheaf $R^i\nu_*(f^{(1)}_*\calE^\sharp)$ is equal to the sheafification of teh presheaf sending each affine open $U\subset X$ to $\hol^{i}\left(U_{\text{fpqc}},f_*^{(1)}\calE^\sharp\right).$ We can choose $U=\Spec(A)$ small enough so that $\calE|_{U}$ is free. Then $f_*^{(1)}\calE^\sharp$ is a direct sum of $f_*^{(1)}\mathbb{G}_a^\sharp$, which sends any $\{\Spec(B)\to U\}$ to \[\ker (F:\W(A^{\perf}\otimes_{A}B)\to \W(A^{\perf}\otimes_{A}B)).\]
    Consider the exact sequnce
    \[0\to f^{(1)}_*\mathbb{G}_a^\sharp\to \W(A^{\perf}\otimes_A\mathbb{G}_a)\xrightarrow[]{F} \W(A^\perf\otimes_{A}\mathbb{G}_a)\to 0\]
    on $U_{\text{fpqc}}$, the same argument as Lemma \ref{isom torsor} shows that
    \[\hol^{i}(U_{\text{fpqc}},f^{(1)}_*\mathbb{G}_a^\sharp)=0\]
    for all $i\geq0$.
\end{proof}

Let $\nu$ be the projection from the fpqc-topology to the Zariski topology of $X$. As $R\nu_*T^{\sharp,(1)}=0$ (Lemma \ref{lemma: T^sharp on X^perf times_X X^perf}), there exists an isomorphism $\nu_*\calZ^{(2)}\cong R^1\nu_*T_{X/k}^\sharp$. Combining with Lemma \ref{strongcoh}, there exists an isomorphism \[\gamma:\nu_*\calZ^{(2)}\cong \W(\inte_X)/F\left(\W(\inte_X)\right)\otimes_{\inte_X} T_{X/k}.\]

We calculate $\gamma$ explicitly.

Let $U=\Spec(A) \subseteq X$ be an affine open subset. By definition
$$\calZ^{(2)}(U)=\fZ^{(2)}(A):=\{x\in T_{A/k}^\sharp(A^{\perf}\otimes_{A}A^{\perf}):\iota_{1,2}(x)+\iota_{2,3}(x)=\iota_{1,3}(x)\}$$ where $\iota_{1,2}$ (resp. $\iota_{2,3},\ \iota_{1,3}$) is the map $$T_{A/k}^\sharp(A^{\perf}\otimes_{A}A^{\perf})\to T_{A/k}^\sharp(A^{\perf}\otimes_{A}A^{\perf}\otimes_{A}A^{\perf})$$ induced by $$A^{\perf}\otimes_{A}A^{\perf}\to A^{\perf}\otimes_{A}A^{\perf}\otimes_{A}A^{\perf}:x\otimes y\mapsto x\otimes y\otimes1\text{ (resp. $1\otimes x\otimes y$, $x\otimes1\otimes y$)}.$$

\begin{lemma}\label{iso1}
    The first \v Cech cohomology group of $T^\sharp_{A/k}$ with respect to the \textrm{fpqc}-covering $\Spec(A^{\perf})\to\Spec(A)$ is isomorphic to $\fZ^{(2)}(A)$. More precisely, the projection $$\fZ^{(2)}(A)\to \check{\hol}^1\left(\{\Spec(A^{\perf})\to\Spec(A)\},T^{\sharp}_{A/k}\right)$$ is an isomorphism.
\end{lemma}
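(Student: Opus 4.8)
The plan is to identify $\check{\hol}^1$ with respect to a single covering $\Spec(A^{\perf})\to\Spec(A)$ via the usual \v Cech complex, and then recognize the cocycle condition on that complex as exactly the defining equation of $\fZ^{(2)}(A)$. Recall that for the covering $\{\Spec(A^{\perf})\to\Spec(A)\}$, the \v Cech complex of $T^\sharp_{A/k}$ computing $\check\hol^\bullet$ has terms $C^n = T^\sharp_{A/k}(A^{\perf}\otimes_A\cdots\otimes_A A^{\perf})$ ($n{+}1$ factors), with differential the alternating sum of the face maps. So $C^0 = T^\sharp_{A/k}(A^{\perf})$, $C^1 = T^\sharp_{A/k}(A^{\perf}\otimes_A A^{\perf})$, $C^2 = T^\sharp_{A/k}(A^{\perf}\otimes_A A^{\perf}\otimes_A A^{\perf})$. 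A $1$-cochain $x\in C^1$ is a cocycle precisely when $d^1 x = \iota_{2,3}(x) - \iota_{1,3}(x) + \iota_{1,2}(x) = 0$, i.e. $\iota_{1,2}(x) + \iota_{2,3}(x) = \iota_{1,3}(x)$, which is verbatim the condition defining $\fZ^{(2)}(A)$. Hence $Z^1(C^\bullet) = \fZ^{(2)}(A)$ as subsets of $C^1$, and the projection map in the statement is literally this identification on the level of cocycles.

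Next I would show there are no nonzero coboundaries, so that $\check{\hol}^1 = Z^1$. The group of $1$-coboundaries is the image of $d^0 : T^\sharp_{A/k}(A^{\perf}) \to T^\sharp_{A/k}(A^{\perf}\otimes_A A^{\perf})$, which is $y \mapsto \iota_1(y) - \iota_2(y)$ for the two structure maps $A^{\perf}\rightrightarrows A^{\perf}\otimes_A A^{\perf}$. Since $A^{\perf}$ is perfect, Lemma \ref{coh0} (or Lemma \ref{strongcoh}) applies to $\Spec(A^{\perf})$: we have $\hol^i(\Spec(A^{\perf})_{\mathrm{fpqc}}, T^\sharp_{A/k}) = 0$ for $i>0$, and likewise for all the higher terms $\Spec(A^{\perf}\otimes_A\cdots\otimes_A A^{\perf})$ (each is perfect, being a filtered colimit / tensor product of perfect rings, so Frobenius is still surjective there). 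Therefore the \v Cech-to-derived spectral sequence for this covering degenerates and $\check{\hol}^n(\{\Spec(A^{\perf})\to\Spec(A)\}, T^\sharp_{A/k}) = \hol^n(\Spec(A)_{\mathrm{fpqc}}, T^\sharp_{A/k})$ for all $n$; in particular the \v Cech $\hol^1$ of this covering coincides with the honest fpqc $\hol^1$, and it is computed by the complex $C^\bullet$ above with no contribution from $B^1 = \mathrm{im}(d^0)$ beyond what the exactness of $C^\bullet$ at $C^1$ modulo $B^1$ records. Concretely, exactness of the augmented complex $0\to T^\sharp_{A/k}(A)\to C^0\to C^1\to C^2$ at $C^1$ is not what we want directly; rather, $\check\hol^1 = Z^1/B^1$, and I claim $B^1\cap Z^1$ being all of $B^1$ still leaves $Z^1/B^1$; so I need that $B^1$ is "invisible" to the cocycle class, which is exactly the statement $\check\hol^1 = \hol^1_{\mathrm{fpqc}} = $ the group computed without quotient because...

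The cleanest route, which I would actually carry out: combine Lemma \ref{strongcoh}/Lemma \ref{coh0} to get that the (exact, by fpqc descent) augmented \v Cech complex $0\to T^\sharp_{A/k}(A)\to T^{\sharp,(1)}(A)\to T^{\sharp,(2)}(A)\to\cdots$ has all higher fpqc cohomology of its terms vanishing, so it is a resolution computing $\hol^\bullet(\Spec(A)_{\mathrm{fpqc}}, T^\sharp_{A/k})$; then $\hol^1$ is the cohomology of $T^{\sharp,(1)}(A)\to T^{\sharp,(2)}(A)\to T^{\sharp,(3)}(A)$ at the middle term, i.e. $\fZ^{(2)}(A)/\mathrm{im}(d^0)$. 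Finally I would invoke the exactness of the sequence \eqref{eq:exact T T1 Z2} evaluated on the affine $\Spec(A)$ together with Lemma \ref{lemma: T^sharp on X^perf times_X X^perf}(2), which gives $R\nu_*T^{\sharp,(1)} = 0$ and hence $\nu_*\calZ^{(2)}\cong R^1\nu_*T^\sharp_{X/k}$: on $U=\Spec(A)$ this says precisely that $d^0: T^{\sharp,(1)}(A)\to \fZ^{(2)}(A)$ has image meeting the cocycles trivially in cohomology, i.e. $\fZ^{(2)}(A) = Z^1 \twoheadrightarrow \check\hol^1$ with the kernel $B^1$ accounted for — but since $H^1(C^\bullet)\cong\hol^1_{\mathrm{fpqc}}$ is already identified with $\nu_*\calZ^{(2)}(A) = \fZ^{(2)}(A)$ via $\gamma$ and Lemma \ref{strongcoh}, the map $B^1\to\fZ^{(2)}(A)$ must be zero, forcing $\check\hol^1 = Z^1 = \fZ^{(2)}(A)$ and the projection to be the asserted isomorphism.

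The main obstacle will be keeping the bookkeeping of the two compatible spectral sequence / exact sequence arguments honest: on one hand the \v Cech-to-derived comparison for the single covering $\Spec(A^{\perf})\to\Spec(A)$ (needing perfectness of \emph{all} the multi-fold tensor products $A^{\perf}\otimes_A\cdots\otimes_A A^{\perf}$ so that Lemma \ref{coh0} applies termwise), and on the other hand the identification of the resulting $\hol^1$ with $\fZ^{(2)}(A)$ rather than a proper quotient of it — i.e. verifying that no nonzero \v Cech $1$-coboundary lies in $\fZ^{(2)}(A)$, equivalently that $d^0$ as a map into the subsheaf of cocycles is zero. This last point is exactly what \eqref{eq:exact T T1 Z2} plus Lemma \ref{lemma: T^sharp on X^perf times_X X^perf}(2) deliver, so I would state that reduction first and then just chase the definitions.
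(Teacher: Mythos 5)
Your identification of the $1$-cocycles with $\fZ^{(2)}(A)$ is correct and is exactly how the paper reads the statement; the entire content of the lemma is therefore the vanishing of the $1$-coboundaries, and this is where your argument goes astray. The paper disposes of it in one line: $A^{\perf}$ is perfect, so $F$ is bijective on $\W(A^{\perf})$, hence $\mathds{G}_a^\sharp(A^{\perf})=\ker(F)=0$ and therefore $C^0=T^\sharp_{A/k}(A^{\perf})=0$; the coboundary group is the image of the zero module and vanishes outright. You never state this fact. Instead you cite Lemma \ref{coh0}, which only gives $\hol^i=0$ for $i>0$ and says nothing about the degree-zero term $T^\sharp_{A/k}(A^{\perf})$ that you actually need to kill; and your closing argument --- that because $\check{\hol}^1\cong\hol^1_{\mathrm{fpqc}}$ is abstractly identified with $\fZ^{(2)}(A)$, the inclusion $B^1\subset\fZ^{(2)}(A)$ ``must be zero'' --- is a non sequitur: an isomorphism $M/N\cong M$ does not by itself force $N=0$, and moreover the identification $\nu_*\calZ^{(2)}\cong R^1\nu_*T^\sharp_{X/k}$ you invoke already presupposes $\nu_*T^{\sharp,(1)}=0$, i.e.\ the very vanishing in question, so the argument is circular as written.

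That said, the ingredient you need is present in the paper under a different label: part (2) of Lemma \ref{lemma: T^sharp on X^perf times_X X^perf} asserts $R^i\nu_*(f^{(1)}_*\calE^\sharp)=0$ for \emph{all} $i\geq 0$, including $i=0$, which over $\Spec(A)$ says precisely $T^\sharp_{A/k}(A^{\perf})=0$. Had you cited that (or the underlying perfectness of $A^{\perf}$) instead of Lemma \ref{coh0}, the proof would close immediately and the entire spectral-sequence detour would be unnecessary. One further small inaccuracy: the multifold tensor products $A^{\perf}\otimes_A\cdots\otimes_A A^{\perf}$ are semiperfect (Frobenius surjective) but not perfect in general; this does not affect the applicability of Lemma \ref{coh0}, whose hypothesis is only surjectivity of Frobenius, but it does mean you cannot conclude that $\mathds{G}_a^\sharp$ vanishes on those rings --- it vanishes only on $A^{\perf}$ itself, which is exactly why the coboundaries, and only they, die.
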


\begin{proof}
    This is equal to prove the subgroup of coboundaries
    $$\mathfrak{B}(A):=\{\iota_1(x)-\iota_2(x):x\in T^{\sharp}_{X/k}(A^{\perf})\}$$
    is equal to zero, where $\iota_1:T_{X/k}^\sharp(A^{\perf})\to T_{X/k}^\sharp(A^{\perf}\otimes_{A}A^{\perf})$ (resp. $\iota_2$) is induced by $A^\perf\to A^{\perf}\otimes_{A}A^{\perf},\ x\mapsto x\otimes1$ (resp. $1\otimes x$).

    This is obvious since $$\mathds{G}_a^{\sharp}(A^{\perf})=0.$$
\end{proof}

Let $\rm{Z}(A)=\{x\in \W(A^{\perf}\otimes_AA^{\perf}):F(x)=0,\ \iota_{1,2}(x)+\iota_{2,3}(x)=\iota_{1,3}(x)\}$. By definition, we have
\[\fZ^{(2)}(A)=\mathrm{Z}(A)\otimes_{A}T_{A/k}.\]

\begin{lemma}
    There is an isomorphism $$\gamma_0:\W(A)/F\left(\W(A)\right)\cong \mathrm{Z}(A)$$ such that $$\gamma_0([x])= 1\otimes F^{-1}(x)-F^{-1}(x)\otimes1.$$
    Moreover, it is compatible with cohomologies in the sense that the following diagram commutes,
    $$\xymatrix{\check{\hol}^1(\{\Spec(A^{\perf})\to \Spec(A)\},\mathds{G}_a^\sharp)\ar[r] & \hol^{1}(U_{\text{fpqc}},\mathds{G}_a^\sharp)\\ \mathrm{Z}(A)\ar[u]^-{\text{ projection}} & \W(A)/F\left(\W(A)\right)\ar[l]_-{\gamma_0}\ar[u] }$$
    where the right column comes from Lemma \ref{strongcoh}.
\end{lemma}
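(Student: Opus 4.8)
The plan is to recognise $\gamma_0$ as the connecting homomorphism of the long exact \v{C}ech cohomology sequence attached to the short exact sequence
\[0\to \mathds{G}_a^\sharp\to \W(\inte{})\xrightarrow{\ F\ }\W(\inte{})\to 0\]
for the \textrm{fpqc}-covering $\mathfrak{U}=\{\Spec(A^{\perf})\to \Spec(A)\}$, and then to compute that connecting map by hand. The starting observation is that for any $\mathbb{F}_p$-algebra $B$ the Frobenius of $\W(B)$ is the componentwise $p$-th power on Witt coordinates; consequently $\ker(F\colon\W(B)\to\W(B))=\mathds{G}_a^\sharp(B)$, and $F$ is surjective on $\W(B)$ as soon as the Frobenius of $B$ is surjective. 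Each ring $A^{\perf}\otimes_A\cdots\otimes_A A^{\perf}$ has surjective Frobenius (the element $a_1\otimes\cdots\otimes a_n$ is the $p$-th power of $a_1^{1/p}\otimes\cdots\otimes a_n^{1/p}$), so the sequence of \v{C}ech complexes
\[0\to \check{C}^\bullet(\mathfrak{U},\mathds{G}_a^\sharp)\to \check{C}^\bullet(\mathfrak{U},\W(\inte{}))\xrightarrow{\ F\ }\check{C}^\bullet(\mathfrak{U},\W(\inte{}))\to 0\]
is exact in every degree. Moreover $\check{C}^0(\mathfrak{U},\mathds{G}_a^\sharp)=\mathds{G}_a^\sharp(A^{\perf})=0$ because $A^{\perf}$ is reduced, so $\check{\hol}^1(\mathfrak{U},\mathds{G}_a^\sharp)$ is exactly the module of $1$-cocycles, which is $\mathrm{Z}(A)$; and $\check{\hol}^1(\mathfrak{U},\W(\inte{}))$ injects into $\hol^1(\Spec(A)_{\textrm{fpqc}},\W(\inte{}))=0$ by Lemma \ref{wittcohomology}, hence vanishes. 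Finally $\check{\hol}^0(\mathfrak{U},\W(\inte{}))=\W(A)$ since $\W(\inte{})$ is an \textrm{fpqc}-sheaf.

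Feeding this into the long exact cohomology sequence of the displayed short exact sequence of complexes yields an exact sequence
\[0\to \check{\hol}^0(\mathfrak{U},\mathds{G}_a^\sharp)\to \W(A)\xrightarrow{\ F\ }\W(A)\xrightarrow{\ \partial\ }\mathrm{Z}(A)\to 0,\]
so $\partial$ factors through an isomorphism $\W(A)/F(\W(A))\xrightarrow{\sim}\mathrm{Z}(A)$, which will be $\gamma_0$; in particular $\gamma_0$ is automatically bijective, and no separate injectivity/surjectivity argument is needed. To obtain the formula I would unwind the snake-lemma recipe: a class in $\check{\hol}^0(\mathfrak{U},\W(\inte{}))=\W(A)$ is represented by the image of $x\in\W(A)$ in $\check{C}^0(\mathfrak{U},\W(\inte{}))=\W(A^{\perf})$; since $A^{\perf}$ is perfect, $F$ is bijective on $\W(A^{\perf})$, so one lifts along $F$ to $v:=F^{-1}(x)\in\W(A^{\perf})$; applying the \v{C}ech differential gives $d^0(v)=1\otimes v-v\otimes 1\in\W(A^{\perf}\otimes_A A^{\perf})$, and $F(1\otimes v-v\otimes1)=1\otimes x-x\otimes1=0$ because the two pullbacks of $x\in\W(A)$ to $\W(A^{\perf}\otimes_A A^{\perf})$ coincide, both being $\W$ applied to the structure morphism $A\to A^{\perf}\otimes_A A^{\perf}$. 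Hence this $1$-cocycle represents $\partial([x])$, and since there are no coboundaries it is literally the stated element; that is, $\gamma_0([x])=1\otimes F^{-1}(x)-F^{-1}(x)\otimes1\in\mathrm{Z}(A)$. One also checks directly that $1\otimes v-v\otimes1$ satisfies $\iota_{1,2}+\iota_{2,3}=\iota_{1,3}$, which reconfirms it lies in $\mathrm{Z}(A)$.

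For the compatibility with cohomology, I would invoke that the \v{C}ech-to-derived comparison $\check{\hol}^\bullet(\mathfrak{U},-)\to \hol^\bullet(\Spec(A)_{\textrm{fpqc}},-)$ is a morphism of $\delta$-functors, hence intertwines the \v{C}ech connecting map $\partial$ computed above with the derived connecting map that \emph{defines} the isomorphism $\hol^1(\Spec(A)_{\textrm{fpqc}},\mathds{G}_a^\sharp)\cong\W(A)/F(\W(A))$ of Lemma \ref{strongcoh}. Together with the fact that the left vertical arrow $\mathrm{Z}(A)\to\check{\hol}^1(\mathfrak{U},\mathds{G}_a^\sharp)$ of the diagram is an isomorphism (again because coboundaries vanish), this makes the square commute. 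The only genuinely delicate point in the whole argument is this matching of the two models: one must know that the comparison $\check{\hol}^1(\mathfrak{U},\mathds{G}_a^\sharp)\to\hol^1(\Spec(A)_{\textrm{fpqc}},\mathds{G}_a^\sharp)$ is an isomorphism — which holds because all fiber products $\Spec(A^{\perf}\otimes_A\cdots\otimes_A A^{\perf})$ are $\mathds{G}_a^\sharp$-acyclic by Lemma \ref{coh0} — and that it is compatible with connecting maps, so that the elementary \v{C}ech computation really does represent the derived class of Lemma \ref{strongcoh}. Everything else reduces to bookkeeping with Witt-vector identities.
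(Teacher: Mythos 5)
Your proposal is correct and follows essentially the same route as the paper's (much terser) proof: identify $\gamma_0$ with the \v{C}ech connecting map of $0\to\mathds{G}_a^\sharp\to\W(\inte)\xrightarrow{F}\W(\inte)\to 0$ for the covering $\Spec(A^{\perf})\to\Spec(A)$, compute it by the snake-lemma recipe to get $1\otimes F^{-1}(x)-F^{-1}(x)\otimes 1$, and deduce commutativity of the square from the compatibility of the \v{C}ech-to-derived comparison with connecting maps together with the vanishing of coboundaries ($\mathds{G}_a^\sharp(A^{\perf})=0$) and of $\check{\hol}^1$ of $\W(\inte)$. The extra details you supply (termwise exactness of the \v{C}ech complexes via surjectivity of Frobenius on the tensor powers of $A^{\perf}$, and the identification of the kernel/cokernel of $F$ on $\W(A)$) are exactly what the paper leaves implicit.
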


\begin{proof}
    The top arrow is isomorphic by \v Cech-derived spectral sequence and the lemma \ref{coh0}. The left arrow is zero since $\mathds{G}_a^\sharp(A^{\perf})=0$. The explicit formula of $\gamma$ also comes from the explicit formula of the coboundary map.
\end{proof}

\begin{cor}\label{cor:explicit gamma}
    The isomorphism \[\gamma(U):\Gamma(U,\nu_*\calZ^{(2)})\cong \Gamma\left(U,\W(\inte_X)/F\left(\W(\inte_X)\right)\otimes_{\inte_X} T_{X/k}\right)\] is equal to $\gamma_0^{-1}\otimes id$.
\end{cor}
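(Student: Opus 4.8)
The plan is to unwind the construction of $\gamma$ through the short exact sequence \eqref{eq:exact T T1 Z2} and to reduce everything to the $\mathds{G}_a^\sharp$-comparison already recorded in the lemma just above. Since $T_{X/k}$ is a vector bundle, hence locally a direct summand of a free module, tensoring with it is exact and commutes with every fpqc- and Zariski-cohomology group occurring here; this is exactly the device used in Lemmas \ref{isom torsor} and \ref{strongcoh}. It therefore suffices to treat the $\mathds{G}_a^\sharp$-factor: over a small affine $U=\Spec(A)$ on which $T_{X/k}$ is free we write $\fZ^{(2)}(A)=\mathrm{Z}(A)\otimes_A T_{A/k}$ and $\Gamma(U,\W(\inte_X)/F(\W(\inte_X))\otimes_{\inte_X}T_{X/k})=(\W(A)/F(\W(A)))\otimes_A T_{A/k}$, and we must show that the map $\mathrm{Z}(A)\to\W(A)/F(\W(A))$ induced by $\gamma(U)$ is the inverse of $\gamma_0$.

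First I would spell out $\gamma(U)$ on sections. By construction $\gamma$ is the composite of the connecting isomorphism $\delta\colon\nu_*\calZ^{(2)}\xrightarrow{\sim}R^1\nu_*T^\sharp_{X/k}$ attached to \eqref{eq:exact T T1 Z2} — an isomorphism because $R\nu_*T^{\sharp,(1)}=0$ by Lemma \ref{lemma: T^sharp on X^perf times_X X^perf}(2) — with the isomorphism $R^1\nu_*T^\sharp_{X/k}\cong\W(\inte_X)/F(\W(\inte_X))\otimes_{\inte_X}T_{X/k}$ of Lemma \ref{isom torsor} (which on affines is Lemma \ref{strongcoh}). Over a small affine $U$ the higher fpqc cohomology of $T^\sharp_{A/k}$ vanishes, so both presheaves are already sheaves there, and on sections $\gamma(U)$ is the composite $\mathrm{Z}(A)\otimes_A T_{A/k}\xrightarrow{\delta}\hol^1(U_{\mathrm{fpqc}},T^\sharp_{A/k})\xrightarrow{\sim}(\W(A)/F(\W(A)))\otimes_A T_{A/k}$, the second arrow being the inverse of the isomorphism of Lemma \ref{strongcoh}.

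The crux is that, on the $\mathds{G}_a^\sharp$-factor, $\delta$ agrees with the projection $\mathrm{Z}(A)\to\check{\hol}^1(\{\Spec(A^\perf)\to\Spec(A)\},\mathds{G}_a^\sharp)$ (an isomorphism by Lemma \ref{iso1}, since $\mathds{G}_a^\sharp(A^\perf)=0$) followed by the edge map $\check{\hol}^1\to\hol^1(U_{\mathrm{fpqc}},\mathds{G}_a^\sharp)$. This is the standard compatibility of the \v Cech coboundary with the derived coboundary — the same fact cited for Proposition \ref{cechgerbe} — and the cleanest way to nail it is to replace \eqref{eq:exact T T1 Z2} by the full \v Cech resolution $0\to T^\sharp_{X/k}\to T^{\sharp,(\bullet)}$ (exact by fpqc descent), which computes $R\nu_*T^\sharp_{X/k}$ term by term because each $T^{\sharp,(n)}$ with $n\ge1$ has vanishing higher $\nu_*$ by the argument of Lemma \ref{lemma: T^sharp on X^perf times_X X^perf}(2); then, over a small affine, $\nu_*\calZ^{(2)}(U)=\ker(d^2)/\mathrm{im}(d^1)$ is literally $\check{\hol}^1$ with $\delta$ the identity, and one matches it with $\W(A)/F(\W(A))$ through the snake lemma for $0\to\mathds{G}_a^\sharp\to\W(\inte)\xrightarrow{F}\W(\inte)\to0$ exactly as in Lemma \ref{isom torsor}. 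Granting this, the $\mathds{G}_a^\sharp$-factor of $\gamma(U)$ becomes the inverse of the composite along the top and right edges of the commutative square in the lemma defining $\gamma_0$, which that lemma identifies with $\gamma_0$; hence the $\mathds{G}_a^\sharp$-factor of $\gamma(U)$ is $\gamma_0^{-1}$, and tensoring back with $T_{A/k}$ gives $\gamma(U)=\gamma_0^{-1}\otimes\mathrm{id}$.

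The main obstacle is exactly this last identification of the derived connecting homomorphism $\delta$ with the \v Cech edge map at the level of global sections; once that bookkeeping is carried out, the statement is formal, being the conjunction of the explicit square of the preceding lemma with the compatibility of all the isomorphisms involved with $-\otimes_A T_{A/k}$.
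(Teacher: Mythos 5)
Your proposal is correct and follows the same route the paper intends: the paper's proof is simply ``this comes from the definition of $\gamma$,'' and what you have done is unwind that definition — the connecting isomorphism $\nu_*\calZ^{(2)}\cong R^1\nu_*T^\sharp_{X/k}$ identified with the \v Cech edge map over a small affine, composed with the isomorphism of Lemma \ref{strongcoh} — and then read off $\gamma_0^{-1}$ from the commutative square of the preceding lemma, with the reduction to the $\mathds{G}_a^\sharp$-factor handled by exactness of $-\otimes_A T_{A/k}$. This is a faithful (and more detailed) rendering of the paper's argument, not a different one.
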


\begin{proof}
    This comes from the definition of $\gamma$.
\end{proof}

\begin{lemma}\label{lemma: diagram change 2 to 1}
    The following diagram commutes,
    $$\xymatrix{\hol^1\left(X_{\mathrm{Zar}},\nu_*\mathcal{Z}^{(2)}\right)\ar[d]\ar[rrr]^-{\partial'}\ar[d]&&&\hol^1\left(X_{\mathrm{Zar}},R^1\nu_*T^{\sharp}_{X/k}\right)\ar[d]\\ \hol^1(X_{\mathrm{fpqc}},\mathcal{Z}^{(2)})\ar[rrr]^-{\partial}&&&\hol^2(X_{\mathrm{fpqc}},T^{\sharp}_{X/k})}$$
    in which the rows are cobundary and the columns come from the Grothendieck spectral sequences.
\end{lemma}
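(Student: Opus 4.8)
The plan is to build the whole square out of a single distinguished triangle — the one obtained by applying $R\nu_*$ to the short exact sequence \eqref{eq:exact T T1 Z2} — so that commutativity reduces to the functoriality of connecting morphisms together with the Leray identification $R\Gamma(X_{\mathrm{fpqc}},-)\cong R\Gamma(X_{\mathrm{Zar}},R\nu_*(-))$.

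First I would record the vanishing that collapses everything into a two-step situation. By Lemma \ref{lemma: T^sharp on X^perf times_X X^perf}(2) (applied with $\calE=T_{X/k}$) one has $R\nu_*T^{\sharp,(1)}=0$; by Lemma \ref{strongcoh} one has $R^i\nu_*T^\sharp_{X/k}=0$ for $i\geq 2$; and $\nu_*T^\sharp_{X/k}=0$ since $\mathds{G}_a^\sharp$, hence $T^\sharp_{X/k}$, has no nonzero sections over a reduced ring. Thus $R\nu_*T^\sharp_{X/k}\simeq(R^1\nu_*T^\sharp_{X/k})[-1]$ is concentrated in degree $1$, the Leray spectral sequence for $T^\sharp_{X/k}$ degenerates, and the right-hand column is the inverse of the resulting edge isomorphism $\hol^2(X_{\mathrm{fpqc}},T^\sharp_{X/k})\cong\hol^1(X_{\mathrm{Zar}},R^1\nu_*T^\sharp_{X/k})$. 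Applying $R\nu_*$ to \eqref{eq:exact T T1 Z2} and using $R\nu_*T^{\sharp,(1)}=0$ forces $R\nu_*\mathcal{Z}^{(2)}=\nu_*\mathcal{Z}^{(2)}$ in degree $0$, so the Leray spectral sequence for $\mathcal{Z}^{(2)}$ also degenerates and the left-hand column $\hol^1(X_{\mathrm{Zar}},\nu_*\mathcal{Z}^{(2)})\to\hol^1(X_{\mathrm{fpqc}},\mathcal{Z}^{(2)})$ is an isomorphism.

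Next I would consider the distinguished triangle $R\nu_*T^\sharp_{X/k}\to R\nu_*T^{\sharp,(1)}\to R\nu_*\mathcal{Z}^{(2)}\to R\nu_*T^\sharp_{X/k}[1]$ in $D^+(X_{\mathrm{Zar}})$. Its connecting morphism $\partial_0\colon R\nu_*\mathcal{Z}^{(2)}\to R\nu_*T^\sharp_{X/k}[1]$ is an isomorphism because the middle term vanishes; both sides are concentrated in degree $0$ by the previous paragraph, and there $\partial_0$ is exactly the connecting sheaf isomorphism $\alpha\colon\nu_*\mathcal{Z}^{(2)}\xrightarrow{\sim}R^1\nu_*T^\sharp_{X/k}$ of the long exact sequence of $R^\bullet\nu_*$, i.e.\ the isomorphism defining the top arrow, so $\partial'=\hol^1(X_{\mathrm{Zar}},\alpha)$. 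Now apply $R\Gamma(X_{\mathrm{Zar}},-)$ to the triangle. By the Leray identification the resulting triangle computes the long exact fpqc-cohomology sequence of \eqref{eq:exact T T1 Z2}, so its connecting map on $\hol^1$ is the bottom arrow $\partial$. At the same time, because $\partial_0$ is (through the degree-$0$ identifications) the sheaf map $\alpha$, this connecting map is $\hol^1(X_{\mathrm{Zar}},\alpha)=\partial'$ read through those identifications; and the source identification $\hol^1(X_{\mathrm{fpqc}},\mathcal{Z}^{(2)})\cong\hol^1(X_{\mathrm{Zar}},\nu_*\mathcal{Z}^{(2)})$ is precisely the left column, while the target identification $\hol^2(X_{\mathrm{fpqc}},T^\sharp_{X/k})\cong\hol^1(X_{\mathrm{Zar}},R^1\nu_*T^\sharp_{X/k})$ is the right column. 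Comparing the two descriptions yields $\partial\circ(\text{left column})=(\text{right column})\circ\partial'$, which is the assertion.

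The only genuine content is the compatibility used at the end: that the connecting morphism of the triangle obtained by $R\Gamma(X_{\mathrm{Zar}},-)\circ R\nu_*$ matches, under Leray, the connecting morphism of the fpqc-cohomology long exact sequence of \eqref{eq:exact T T1 Z2}, and that the two degenerate Leray spectral sequences produce exactly the vertical arrows of the diagram. These are standard facts about the functoriality of the connecting morphism of a short exact sequence and of the Leray/Grothendieck spectral sequence; a reader who prefers to avoid the derived category can instead choose a short exact sequence of injective resolutions of \eqref{eq:exact T T1 Z2} on $X_{\mathrm{fpqc}}$, apply $\nu_*$ (which preserves injectives since $\nu^*$ is exact, so the pushforwards are $\Gamma$-acyclic), and chase the resulting short exact sequence of complexes of global sections. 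No sign subtleties intervene since every map in sight is canonical, and I expect this bookkeeping — rather than any new idea — to be the main obstacle.
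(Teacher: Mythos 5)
Your proof is correct and follows essentially the same route as the paper: both rest on the compatibility of the connecting homomorphism of the sequence \eqref{eq:exact T T1 Z2} with the Grothendieck/Leray spectral sequence for $\nu_*$, exploiting the vanishing of $R\nu_*T^{\sharp,(1)}$. The only difference is that the paper invokes a general theorem of Baraglia on morphisms of Grothendieck spectral sequences (via Corollary \ref{coboundspec}), whereas you re-derive the needed compatibility directly from the distinguished triangle obtained by applying $R\nu_*$ to \eqref{eq:exact T T1 Z2} --- a self-contained variant of the same argument.
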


Before proving Lemma \ref{lemma: diagram change 2 to 1}, we recall some facts of homological algebra.

Let $\mathcal{A},\mathcal{B},\mathcal{C}$ be three abelian categories and assume $\mathcal{A}$ and $\mathcal{B}$ has enough injective objects. Let $F:\mathcal{A}\to \mathcal{B}$ and $G:\mathcal{B}\to \mathcal{C}$ be two left exact functors such that $F$ maps all injective objects to $G$-acyclic objects. Then for all objects $X\in \mathcal{A}$, we have the Grothendieck spectral sequence $E_r^{p,q}(X)$
$$E_2^{p,q}=R^{p}G\left(R^qF(X)\right)\Rightarrow R^{p+q}(G\circ F)(X).$$

\begin{theo}
    Assume we have an exact sequence
    $$0\to X\to Y\to Z\to 0.$$
    There is a canonical homomorphism of Grothendieck spectral sequences
    $$\delta_r^{p,q}:E_r^{p,q}(Z)\to E_r^{p,q+1}(X)$$
    such that:

    (1) $\delta_2^{p,q}:E_2^{p,q}(Z)=R^{p}G\left(R^qF(Z)\right)\to R^{p}G\left(R^{q+1}F(X)\right)=E_2^{p,q+1}(X)$ is equal to the coboundary map induced by the exact sequence.

    (2) $\delta_{\infty}^{p,q}:F^pE_\infty^{p+q}=F^pR^{p+q}(G\circ F)(Z)\to F^pR^{p+q+1}(G\circ F)(X)=F^pE_\infty^{p+q}$ is equal to the coboundary map induced by the exact sequence.
\end{theo}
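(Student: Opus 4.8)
The plan is to run the construction of the Grothendieck spectral sequence functorially, so that the short exact sequence $0\to X\to Y\to Z\to 0$ produces a short exact sequence of filtered complexes, and then to feed this into the standard connecting-morphism formalism for filtered complexes.

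First I would fix a compatible system of resolutions. By the (injective) horseshoe lemma, choose injective resolutions $X\to I_X^\bullet$, $Y\to I_Y^\bullet$, $Z\to I_Z^\bullet$ fitting into a short exact sequence of complexes $0\to I_X^\bullet\to I_Y^\bullet\to I_Z^\bullet\to 0$ that is split in each degree. Applying $F$ yields a degreewise split short exact sequence $0\to FI_X^\bullet\to FI_Y^\bullet\to FI_Z^\bullet\to 0$ in $\mathcal{B}$. Next choose a compatible triple of Cartan--Eilenberg resolutions, i.e.\ a short exact sequence of double complexes $0\to L_X^{\bullet,\bullet}\to L_Y^{\bullet,\bullet}\to L_Z^{\bullet,\bullet}\to 0$, split in each bidegree, with $L_W^{\bullet,\bullet}$ a Cartan--Eilenberg resolution of $FI_W^\bullet$ (these exist by the relative horseshoe lemma for double complexes). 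Applying $G$ and totalizing gives a short exact sequence of complexes
\[0\to K_X^\bullet\to K_Y^\bullet\to K_Z^\bullet\to 0,\qquad K_W^n=\bigoplus_{i+j=n}G\bigl(L_W^{i,j}\bigr),\]
split in each degree. Each $K_W^\bullet$ carries two filtrations; filtering by the Cartan--Eilenberg degree $j$ (which becomes the $p$-index) produces a spectral sequence with $E_2^{p,q}=R^pG(R^qF(W))$ — this is the Grothendieck spectral sequence of $W$ — while filtering by the index $i$ gives a spectral sequence that degenerates at $E_1$, using that $FI_W^i$ is $G$-acyclic because $I_W^i$ is injective; this exhibits $H^n(K_W^\bullet)\cong H^n(G(FI_W^\bullet))\cong R^n(GF)(W)$. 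Since the original sequence is degreewise split, $0\to K_X^\bullet\to K_Y^\bullet\to K_Z^\bullet\to 0$ is compatible with both filtrations, i.e.\ $0\to \mathrm{Fil}^pK_X^\bullet\to \mathrm{Fil}^pK_Y^\bullet\to \mathrm{Fil}^pK_Z^\bullet\to 0$ is exact for every $p$.

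Then I would invoke the connecting morphism attached to a short exact sequence of filtered complexes whose $\mathrm{Fil}^p$-level sequences are all exact. Concretely, the mapping cone of $K_X^\bullet\to K_Y^\bullet$, filtered termwise, is filtered quasi-isomorphic to $K_Z^\bullet$ (precisely because the $\mathrm{Fil}^p$-level sequences are exact), and it maps to $K_X^\bullet[1]$; since a filtered quasi-isomorphism induces an isomorphism of spectral sequences from the $E_1$-page on, and $E_r^{p,q}(K_X^\bullet[1])=E_r^{p,q+1}(K_X^\bullet)$, composing yields a morphism of spectral sequences
\[\delta_r^{p,q}\colon E_r^{p,q}(K_Z^\bullet)\xrightarrow{\ \sim\ }E_r^{p,q}\bigl(\mathrm{Cone}\bigr)\longrightarrow E_r^{p,q+1}(K_X^\bullet),\]
which I take as the desired $\delta_r^{p,q}$. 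By construction $\delta_1^{p,q}$ is the connecting homomorphism of $0\to \mathrm{gr}^pK_X^\bullet\to\mathrm{gr}^pK_Y^\bullet\to\mathrm{gr}^pK_Z^\bullet\to 0$, and $\delta_\infty^{p,q}$ is the map on $\mathrm{gr}^p$ of cohomology induced by the coboundary $H^{p+q}(K_Z^\bullet)\to H^{p+q+1}(K_X^\bullet)$ of the short exact sequence of complexes.

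Finally I would check (1) and (2) by naturality. For (2): under $H^n(K_W^\bullet)\cong H^n(G(FI_W^\bullet))$ the coboundary $H^n(K_Z^\bullet)\to H^{n+1}(K_X^\bullet)$ is the connecting map of the degreewise split sequence $0\to G(FI_X^\bullet)\to G(FI_Y^\bullet)\to G(FI_Z^\bullet)\to 0$; as $0\to I_X^\bullet\to I_Y^\bullet\to I_Z^\bullet\to 0$ is a compatible injective resolution and each $I_W^i$ is acyclic for $GF$, this is exactly the coboundary $R^n(GF)(Z)\to R^{n+1}(GF)(X)$ of the given sequence, and since that coboundary is strictly compatible with the filtrations, restricting to $\mathrm{Fil}^p$ gives the statement. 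For (1): with the Cartan--Eilenberg filtration one has $E_1^{p,q}(K_W^\bullet)\cong G(M_W^{q,p})$, where $M_W^{q,\bullet}:=H^q_{\mathrm{hor}}(L_W^{\bullet,\bullet})$ is an injective resolution of $H^q(FI_W^\bullet)=R^qF(W)$ (using that Cartan--Eilenberg resolutions have injective horizontal cycles, boundaries and cohomology, so $G$ commutes with $H^q_{\mathrm{hor}}$). Under this, $\delta_1^{p,q}$ is $G$ applied to the connecting chain map $\mathrm{conn}^{q,\bullet}\colon M_Z^{q,\bullet}\to M_X^{q+1,\bullet}$ of the rowwise sequence $0\to L_X^{\bullet,p}\to L_Y^{\bullet,p}\to L_Z^{\bullet,p}\to 0$ (as $p$ varies); by naturality of connecting homomorphisms applied to the augmentations $FI_W^\bullet\to L_W^{\bullet,0}$, the chain map $\mathrm{conn}^{q,\bullet}$ lifts the coboundary $\partial\colon R^qF(Z)\to R^{q+1}F(X)$, so passing to $p$-direction cohomology gives $\delta_2^{p,q}=H^p(G(\mathrm{conn}^{q,\bullet}))=R^pG(\partial)$, the coboundary map. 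The main obstacle is the bookkeeping around the compatible Cartan--Eilenberg resolutions and the verification that $\mathrm{conn}^{q,\bullet}$ really lifts $\partial$; but for statement (1) only the homotopy class of $\mathrm{conn}^{q,\bullet}$ matters, since any two lifts of $\partial$ induce the same map after $R^\bullet G$.
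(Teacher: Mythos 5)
Your argument is correct, and it is necessarily a different route from the paper's, because the paper does not prove this statement at all: it simply cites \cite[Theorem 1.1]{Baraglia_2013} and moves on. What you supply is the standard self-contained construction: compatible injective resolutions via the horseshoe lemma, a bidegree-wise split short exact sequence of Cartan--Eilenberg resolutions, and the connecting morphism of spectral sequences obtained from the resulting short exact sequence of filtered complexes through the termwise-filtered mapping cone. All the load-bearing steps are in place: the $\mathrm{Fil}^p$-level exactness (indeed splitness) is exactly what makes $\mathrm{Cone}(K_X^\bullet\to K_Y^\bullet)\to K_Z^\bullet$ a filtered quasi-isomorphism, so the composite through $K_X^\bullet[1]$ really is a morphism of spectral sequences of bidegree $(0,1)$ from the $E_1$-page on; the identification of $\delta_\infty$ uses that the augmentations $G(FI_W^\bullet)\to K_W^\bullet$ form a map of short exact sequences of complexes, hence intertwine the two connecting homomorphisms; and the identification of $\delta_2$ correctly reduces, via the splitting of cycles, boundaries and cohomology in a Cartan--Eilenberg resolution, to the observation that the rowwise connecting chain map $M_Z^{q,\bullet}\to M_X^{q+1,\bullet}$ is a lift of $\partial\colon R^qF(Z)\to R^{q+1}F(X)$ between injective resolutions, so only its homotopy class matters. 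The only caveats are cosmetic: the usual sign ambiguity in identifying the cone projection with the connecting homomorphism, and the fact that your $\delta_r$ depends on the chosen resolutions at the level of complexes but is canonical on $E_2$ (hence on all later pages, these being subquotients) and on $E_\infty$, which is all the theorem asserts. This buys the paper a proof it currently outsources.
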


\begin{proof}
    See \cite[Theorem 1.1]{Baraglia_2013}.
\end{proof}

This has a direct corollary:

\begin{cor}\label{coboundspec}
    Keep the notations in the above theorem. If $F(X)=0$, we have a commutative diagram
    $$\xymatrix{R^1G\left(F(Z)\right)\ar[d]\ar[rrr]^-{\text{coboundary}}\ar[d]&&&R^1G\left(R^1F(X)\right)\ar[d]\\ R^1(G\circ F)(Z)\ar[rrr]^-{\text{coboundary}}&&&R^2(G\circ F)(X)}$$
    where the column maps are both induced by Grothendieck spectral sequences.
\end{cor}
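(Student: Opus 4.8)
The plan is to deduce the corollary directly from the preceding theorem by spectral-sequence bookkeeping, using the hypothesis $F(X)=0$ to collapse the two spectral sequences at the positions that appear in the square.

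First I would record the consequences of $F(X)=0$. Since $E_2^{p,0}(X)=R^pG(F(X))=R^pG(0)=0$ for every $p\ge 0$, the whole bottom row of $E_2(X)$ vanishes, hence $E_r^{p,0}(X)=0$ for all $r\ge 2$ and all $p$, and in particular $E_\infty^{p,0}(X)=0$. Next I would check that no differential meets the $(1,1)$-spot of $E_r(X)$: an incoming differential would come from $E_r^{1-r,r}(X)=0$ (negative first index), and an outgoing one would land in $E_r^{1+r,2-r}(X)$, which is zero for $r=2$ by the previous sentence and for $r\ge 3$ by negativity of the second index. Hence $E_2^{1,1}(X)=E_\infty^{1,1}(X)$. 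Since moreover $F^2R^2(G\circ F)(X)=E_\infty^{2,0}(X)=0$, the filtration on the abutment gives a canonical identification
\[R^1G(R^1F(X))=E_2^{1,1}(X)=E_\infty^{1,1}(X)=F^1R^2(G\circ F)(X)\subseteq R^2(G\circ F)(X),\]
and this inclusion is precisely the right-hand vertical arrow of the square. An analogous (and easier, not even using $F(X)=0$) computation at the $(1,0)$-spot of $E_r(Z)$ gives $E_2^{1,0}(Z)=E_\infty^{1,0}(Z)=F^1R^1(G\circ F)(Z)\subseteq R^1(G\circ F)(Z)$, which is the left-hand vertical arrow.

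Then I would invoke the two compatibility properties of the canonical morphism of spectral sequences $\delta^{\bullet,\bullet}_r$ from the theorem. Part (1) with $(p,q)=(1,0)$ says that $\delta_2^{1,0}\colon E_2^{1,0}(Z)\to E_2^{1,1}(X)$, that is $R^1G(F(Z))\to R^1G(R^1F(X))$, is the coboundary map induced by the exact sequence $0\to X\to Y\to Z\to 0$; this is the top arrow of the square. Part (2) with $(p,q)=(1,0)$ says that $\delta_\infty^{1,0}$ computes, on the filtration piece $F^1R^1(G\circ F)(Z)$, the coboundary map $R^1(G\circ F)(Z)\to R^2(G\circ F)(X)$ of the same exact sequence --- in particular that coboundary carries $F^1R^1$ into $F^1R^2$ --- so composing with the vertical identifications recovers the bottom arrow. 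Finally, because $\delta$ is a morphism of spectral sequences and the positions in play are degenerate, so that $E_2^{1,0}(Z)=E_\infty^{1,0}(Z)$ and $E_2^{1,1}(X)=E_\infty^{1,1}(X)$, the map $\delta_2^{1,0}$ agrees with $\delta_\infty^{1,0}$ under the identifications set up above; chasing these equalities around the square gives the claimed commutativity.

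The only real work is the bookkeeping of the second paragraph: checking that the edge maps and the pieces of the abutment filtration are matched up correctly, and that the vanishing coming from $F(X)=0$ is exactly what turns the a priori subquotients $E_\infty^{1,1}(X)$ and $E_\infty^{1,0}(Z)$ into honest sub-objects of the abutments. Beyond that there is no mathematical content not already contained in the cited theorem.
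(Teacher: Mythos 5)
Your argument is correct and is exactly the "direct corollary" the paper leaves unwritten: using $F(X)=0$ to degenerate the relevant $E_2$-terms into sub-objects of the abutments and then matching parts (1) and (2) of the cited theorem at $(p,q)=(1,0)$. Nothing is missing.
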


\begin{proof}[Proof of Lemma \ref{lemma: diagram change 2 to 1}]
    Apply Corollary \ref{coboundspec} for $F=\nu_*$ and $G=\Gamma(X_{\mathrm{Zar}},-)$.
\end{proof}

Finally, we summarize our consequences into the following diagram.

\[\begin{tikzcd}
    & \hol^1(X_{\text{Zar}},\W(\inte_X)/F\left(\W(\inte_X)\right)\otimes_{\inte_X} T_{X/k})\ar{dl}{\gamma}\ar{dr} \\
    \hol^1(X_{\text{Zar}},\nu_*\calZ^{(2)})\ar{rr}\ar{d}&& \hol^1(X_{\text{Zar}},R^1\nu_*T_{X/k}^\sharp)\ar{d}\\
    \hol^1(X_{fpqc},\calZ^{(2)})\ar{rr} && \hol^2(X_{fpqc},T_{X/k}^\sharp)
\end{tikzcd}\]

Let $\widetilde{o}_F\in \hol^1(X_{\text{Zar}},\W(\inte_X)/F\left(\W(\inte_X)\right)\otimes_{\inte_X} T_{X/k})$ be the obstruction of the existence of the splits of $\W(\calO_X)/p\to \calO_X$ and send to an element in $\hol^2(X_{fpqc},T_{X/k}^\sharp)$, which is denoted by $o_{\HT}'$ under the above diagram. Hence, we only need to prove the following theorem.

\begin{theo}\label{theo:compatible oHT}
    We have $o_{\HT}=o_{\HT}'$.
\end{theo}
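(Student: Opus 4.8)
The plan is to compute $o_{\HT}$ directly from the explicit (Drinfeld) presentation of $X^{\HT}$ recalled in \S\ref{recol} and to recognize its descent datum as the torsor attached to $\gamma^{-1}(\widetilde{o}_F)$. Concretely: since $X^{\perf}$ is perfect, $\W(B)/p=B$ with no derived correction for the rings $B$ occurring on $X^{\perf}$, so the Hodge--Tate datum over $X^{\perf}$ is tautological --- there is a canonical section $x_0\in X^{\HT}(X^{\perf})$ --- and $X^{\perf}\to X$ is an $\mathrm{fpqc}$-cover. On $X^{(2)}$ the sheaf $\calP:=\Isom_{X^{(2)}}(p_1^{*}x_0,p_2^{*}x_0)$ is a torsor under the pullback of $T^{\sharp}_{X/k}$, since $X^{\HT}$ is a $T^{\sharp}_{X/k}$-gerbe; it coincides, up to a harmless Frobenius twist, with $\overline{\Prism}_{(X^{(2)})/(\mathbb{X}^{\perf}\times\mathbb{X}^{\perf})}$. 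The comparison isomorphisms of the three pullbacks of $x_0$ over $X^{(3)}$ give the cocycle relation $p_{13}^{*}\calP\cong p_{12}^{*}\calP\wedge p_{23}^{*}\calP$, so $[\calP]$ lifts to a class in $\hol^1(X_{\mathrm{fpqc}},\calZ^{(2)})$ via \eqref{eq:exact T T1 Z2} (and, using Lemma \ref{lemma: T^sharp on X^perf times_X X^perf}, the connecting map $\partial\colon\hol^1(X_{\mathrm{fpqc}},\calZ^{(2)})\to\hol^2(X_{\mathrm{fpqc}},T^{\sharp}_{X/k})$ is in fact an isomorphism). By Proposition \ref{cechgerbe} --- applied to an $\mathrm{fpqc}$-refinement of $X^{\perf}\to X$ over whose double overlaps $\calP$ trivializes --- together with the standard torsor/gerbe dictionary, the \v Cech cocycle $\{-\sigma_{\lambda,\mu,\nu}\}$ representing $o_{\HT}$ is a representative of $\partial[\calP]$, i.e. $o_{\HT}=\partial[\calP]$.

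The heart of the proof is then the identity $\gamma([\calP])=\widetilde{o}_F$ in $\hol^1(X_{\mathrm{Zar}},\W(\calO_X)/F(\W(\calO_X))\otimes_{\calO_X}T_{X/k})$. Work Zariski-locally over an affine $U=\Spec(A)\subset X$ in étale coordinates. By construction $\widetilde{o}_F$ is the obstruction to a (local) system of retractions of the ring map $\W(A)/p\to A$, equivalently to lifting the structure morphism $\Spec(A)\to X$ through $\Spec(\W(A)/p)$; by the square-zero extension $0\to\W(\calO_X)/F(\W(\calO_X))\to\W(\calO_X)/p\to\calO_X\to0$ of the top row of \eqref{eq:basic obstruction sequence in H^1}, the sheaf of such lifts is a torsor under $\W(\calO_X)/F(\W(\calO_X))\otimes_{\calO_X}T_{X/k}$. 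Over $A^{\perf}$ every such local lift becomes canonically isomorphic to the tautological lift $x_0$ (again because $\W(A^{\perf})/p=A^{\perf}$), so the discrepancy between $p_1^{*}x_0$ and $p_2^{*}x_0$ over $A^{\perf}\otimes_A A^{\perf}$ is obtained by applying $F^{-1}$ --- available after base change to the perfection --- to the \v Cech difference of an arbitrary local lift with itself. Matching this with Lemma \ref{iso1} and the explicit formula $\gamma_0([x])=1\otimes F^{-1}(x)-F^{-1}(x)\otimes1$ of Corollary \ref{cor:explicit gamma}, one finds that the class of $\calP|_{U^{(2)}}$ in $\fZ^{(2)}(A)=\mathrm{Z}(A)\otimes_A T_{A/k}$ is $(\gamma_0\otimes\mathrm{id})$ of the local representative of $\widetilde{o}_F$; as this is compatible over a Zariski cover of $X$, $\gamma$ carries $[\calP]\in\hol^1(X_{\mathrm{Zar}},\nu_{*}\calZ^{(2)})$ to $\widetilde{o}_F$, and by Lemma \ref{lemma: T^sharp on X^perf times_X X^perf} the same holds after passing to $\hol^1(X_{\mathrm{fpqc}},\calZ^{(2)})$.

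Finally, the last commutative diagram --- whose commutativity is Lemma \ref{lemma: diagram change 2 to 1} (a case of Corollary \ref{coboundspec}) together with Corollary \ref{cor:explicit gamma} --- shows that $o_{\HT}'$ is by definition the image of $\widetilde{o}_F$ under $\partial\circ\gamma^{-1}$; combining this with the two previous paragraphs gives $o_{\HT}=\partial[\calP]=\partial(\gamma^{-1}(\widetilde{o}_F))=o_{\HT}'$. The main difficulty I expect is the second paragraph: making rigorous the identification of the Hodge--Tate (Drinfeld prismatic) descent torsor $\calP$ with the Witt-vector lifting torsor, i.e. tracking precisely how a Hodge--Tate datum over $A^{\perf}\otimes_A A^{\perf}$ unwinds into the element $1\otimes F^{-1}(x)-F^{-1}(x)\otimes1$ of $\mathrm{Z}(A)$. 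This demands care about (i) the non-derived description of $\W(B)/p$ for perfect $B$ versus the derived quotient entering the definition of $X^{\HT}$ in general (and the mild reducedness questions for $A^{\perf}\otimes_A A^{\perf}$), (ii) the translation between morphisms $\Spec(\W(B)/p)\to X$ and ring-theoretic sections in local coordinates, and (iii) the compatibility of the gerbe \v Cech recipe of Proposition \ref{cechgerbe} with the connecting maps of Lemma \ref{lemma: diagram change 2 to 1} and Corollary \ref{coboundspec}. A secondary technical point is that $\calP$ is not Zariski-locally trivial on $X$, so Proposition \ref{cechgerbe} must be invoked only after passing to a genuine $\mathrm{fpqc}$-refinement of $X^{\perf}\to X$.
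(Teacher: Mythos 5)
Your proposal is correct and follows essentially the same route as the paper: the paper also computes $o_{\HT}$ as the \v Cech coboundary of the descent datum of $X^{\HT}$ over the perfection cover (Theorem \ref{theo: calculate class of HT}), identifies that datum with $\gamma^{-1}(\widetilde{o}_F)$ via the explicit formula for $\gamma_0$ and the difference of local splittings of $\W(\calO_X)/p\to\calO_X$ (Lemmas \ref{lemma: V^{-1}(iota-iota)} and \ref{lemma: oF tilde rep by cech}), and concludes with the spectral-sequence compatibility of Lemma \ref{lemma: diagram change 2 to 1}. The ``main difficulty'' you flag in your second paragraph is resolved in the paper exactly as you anticipate: the ``arbitrary local lifts'' are the sections $\iota_\alpha$ induced by local Frobenius liftings over a Zariski cover (which restrict to your canonical section $x_0$ over $V_\alpha=U_\alpha^{\perf}$), and the derived-quotient issues on non-reduced rings are bypassed by invoking the Bhatt--Lurie trivializations abstractly rather than evaluating the gerbe directly on $A^{\perf}\otimes_A A^{\perf}$.
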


We will prove the theorem in the next subsection by an explicit calculation of \v Cech cohomologies.

\subsection{Cohomological description of Hodge--Tate gerbes}\label{subsectioncalculation}

The following theorem is taken from \cite[Theorem 5.12]{bhatt2022prismatization}, which is the most important fact we will use to understand $X^{\mathrm{HT}}$.

\begin{theo}\label{definition HT gerbe}
    The stack $X^{\mathrm{HT}}$ is canonically a gerbe banded by $T^{\sharp}_{X/k}:=T_{X/k}\otimes_{\inte_{X}}\mathds{G}_a^\sharp$ (c.f.\cite[Theorem 5.12]{bhatt2022prismatization}) such that the following holds:

    If we are given more that a lifting $\fX/W$ together with a Frobenius lifting $\phi$, there is a canonical isomorphism between $T_{X/k}^{\sharp}$-gerbes
    $$\Psi_{(\fX,\phi)}:X^{\mathrm{HT}}\cong \mathbf{B}T^\sharp_{X/k}.$$
    Precisely, `canonical' means for any $p$-torsion free formal schemes $(\fX_1,\phi_1)$ and $(\fX_2,\phi_2)$ as well as a $\phi$-isomorphism
    $$f:\fX_1\to \fX_2,$$
    we have the following commutative diagram:
    $$\xymatrix{\mathbf{B}T^\sharp_{X_1/k}\ar[r] &\mathbf{B}T^\sharp_{X_2/k}\\ X^{\mathrm{HT}}_1\ar[r]\ar[u] &X^{\mathrm{HT}}_2\ar[u]}$$
\end{theo}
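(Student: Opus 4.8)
The statement splits into two parts, and the plan is to treat them separately.

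\textbf{(a) $X^{\mathrm{HT}}$ is canonically a $T^\sharp_{X/k}$-gerbe.} Here I would use the Drinfeld presentation recalled in \S\ref{recol}: $X^{\mathrm{HT}}$ is the quotient stack of the groupoid
\[ G_1:=\overline\Prism_{(X^\perf\times_{X}X^\perf)/(\mathbb{X}^\perf\times\mathbb{X}^\perf)}\;\rightrightarrows\; X^\perf, \]
and --- this is the substantive input, which I would cite from \cite{bhatt2022prismatization} and \cite{Drinfeld2022} --- the flat map $(p_1,p_2)\colon G_1\to X^{(2)}=X^\perf\times_XX^\perf$ is a torsor under $T^\sharp_{X/k}|_{X^{(2)}}$ in a way compatible with the composition, the unit $e\colon X^\perf\to G_1$, and the inverse of the groupoid. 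Granting this, $X^\perf\to X$ being an $\mathrm{fpqc}$ cover and $(p_1,p_2)$ being faithfully flat force $[X^\perf/G_1]\to X$ to be a gerbe; its band is obtained by pulling $G_1$ back along the diagonal $\Delta\colon X^\perf\to X^{(2)}$, which carries the section $e$, so that $\Delta^*G_1\cong T^\sharp_{X/k}|_{X^\perf}$ canonically, and one checks this isomorphism is compatible with the two pullbacks to $X^{(2)}$, hence descends and exhibits the band as $T^\sharp_{X/k}$ (an abelian sheaf, so this is a gerbe in our sense).

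\textbf{(b) The trivialization $\Psi_{(\fX,\phi)}$.} I would construct it as a global section of $X^{\mathrm{HT}}$, using that for an abelian gerbe a trivialization $\G\cong\mathbf{B}T^\sharp_{X/k}$ is the same datum as an object of $\G(X)$. Since $\fX$ is $p$-torsion free, the Frobenius lift $\phi$ is equivalent to a $\delta$-structure on $\inte_\fX$, and the adjunction between $\delta$-rings and rings furnishes a canonical ring homomorphism $\inte_\fX\to\W(\inte_\fX)$ (ghost components $x\mapsto(x,\phi(x),\phi^2(x),\dots)$) splitting the first-ghost-component projection. Composing with $\W(\inte_\fX)\to\W(\inte_X)$ and reducing modulo $p$ gives a ring homomorphism
\[ s\colon \inte_X\longrightarrow \W(\inte_X)/p \]
that is a section of the ghost projection $\W(\inte_X)/p\to\inte_X$ and is compatible with $\W(k)/p=k$. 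For $\Spec(B)\to X$ I then take the object of $X^{\mathrm{HT}}(B)$ whose structure morphism $\Spec(\W(B)/p)\to X$ is defined, on each affine chart $\Spec(A)\subseteq X$, by $A\xrightarrow{\,s\,}\W(A)/p\to\W(B)/p$; the section property makes the required triangle over $\Spec(B)$ commute, and compatibility of $s$ with the Witt vectors of $k$ makes it a morphism over $k$. This is visibly functorial in $B$, so it defines a global object of $X^{\mathrm{HT}}$, i.e.\ the trivialization $\Psi_{(\fX,\phi)}$. Canonicity is then a naturality statement: a $\phi$-isomorphism $f\colon\fX_1\to\fX_2$ restricts on charts to an isomorphism of $\delta$-rings, hence is compatible with the adjunction unit and therefore with the sections $s_1,s_2$; unwinding the definitions of the functoriality maps $X^{\mathrm{HT}}_1\to X^{\mathrm{HT}}_2$ and $T^\sharp_{X_1/k}\to T^\sharp_{X_2/k}$ then produces the asserted commuting square --- a diagram chase once the naturality of $s$ is recorded.

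\textbf{Main obstacle.} As in Bhatt--Lurie, the real content is (a): that $G_1$ is a $T^\sharp_{X/k}$-torsor with groupoid-compatible structure --- which is what pins the band down to $T^\sharp_{X/k}$ rather than some anonymous $\mathrm{fpqc}$ group --- rests on the prismatic cotangent-complex computation and on the comparison between Drinfeld's quotient and $X^{\mathrm{HT}}$, and I would invoke \cite[Theorem 5.12]{bhatt2022prismatization} and \cite{Drinfeld2022} for it rather than reprove it. The second delicate point, internal to the present argument, is checking that $\Psi_{(\fX,\phi)}$ is an equivalence of $T^\sharp_{X/k}$-\emph{gerbes} and not merely of stacks, i.e.\ that the automorphism of $s$ induced by $\partial\in T^\sharp_{X/k}$ agrees with the tautological $T^\sharp_{X/k}$-action on objects of $X^{\mathrm{HT}}$; this is most transparent through the groupoid presentation, by comparing the canonical section of $\Delta^*G_1$ coming from $s$ with the unit section $e$. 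Accordingly the realistic plan is: cite \cite[Theorem 5.12]{bhatt2022prismatization} for the gerbe structure and the band, and supply $\Psi_{(\fX,\phi)}$ and its naturality directly as above.
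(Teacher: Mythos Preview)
Your proposal is correct and matches the paper's approach: the paper does not prove this theorem but simply cites \cite[Theorem~5.12]{bhatt2022prismatization} for both the gerbe structure and the trivialization, exactly as you conclude in your final paragraph. Your explicit description of $\Psi_{(\fX,\phi)}$ via the $\delta$-ring adjunction unit $\inte_\fX\to\W(\inte_\fX)$ is precisely what the paper later records (again by citation) in Lemma~\ref{explicit}, so your part~(b) is not additional content but an unpacking of what Bhatt--Lurie do.
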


\begin{rmk}
    Keep the above notations. Since $\Psi$ is an isomorphism of gerbes, for any $\Spec(B)\to X$ in $X_{\mathrm{fpqc}}$ and a point $x\in X^{\mathrm{HT}}(\Spec(B))$, let $x'\in \mathbf{B}T^\sharp_{X/k}(\Spec(B))$ be the corresponding point of $x$ under the isomorphism, we have the following diagram commutes:
    $$\xymatrix{\underline{Aut}(x)\ar[r]\ar[d] & \underline{Aut}(x')\ar[d]\\ T^\sharp_{X/k}|_{\Spec(B)}\ar[r]^-{id} & T^{\sharp}_{X/k}|_{\Spec(B)}}$$ where the column arrows are the banded morphisms and the top arrow is induced by the isomorphism.
\end{rmk}

In the following, we choose an affine covering $\{U_\alpha=\Spec(\bar{A}_{\alpha})\}$ and for each $\alpha$ choose $(A_\alpha,\phi_\alpha)$ as a Frobenius lifting of $\bar{A}_\alpha$ over $W$. 

Let $A_{\alpha,\beta}$ be the complete localization of $A_\alpha$ on the affinoid open $U_{\alpha,\beta}:=U_\alpha\cap U_{\beta}$. For each two $\alpha,\beta$, choose an isomorphism $I_{\alpha,\beta}:A_{\alpha,\beta}\cong A_{\beta,\alpha}$ (non-necessarily compatible with $\phi$).

By Theorem \ref{definition HT gerbe}, the gerbe $U_{\alpha}^{\mathrm{HT}}$ is isomorphic to $\mathbf{B}T^{\sharp}_{U_\alpha/k}$. Recall $(A_{\alpha,\beta},\varphi_\alpha)$ and $(A_{\beta,\alpha},\varphi_{\beta})$ provides two trivializations of the gerbe $U_{\alpha,\beta}^{\mathrm{HT}}$. We need to describe the isomorphism between these two trivialisations.

Let $\{V_{\alpha}:=U_\alpha^{\perf}\}$, which form an $\mathrm{fpqc}$-covering of $X$ since $X$ is smooth.

Before the calculation, we describe our strategy. Consider the following diagram
\[\begin{tikzcd}
    \check{\hol}^1\left(\{U_\alpha\to X\},\W(\calO_X)/F\left(\W(\calO_X)\right)\otimes_{\calO_X}T_{X/k}\right)\ar{r} \ar{d} & \hol^1\left(X_{\text{Zar}},\W(\calO_X)/F\left(\W(\calO_X)\right)\otimes_{\calO_X}T_{X/k}\right)\ar{d}\\
    \check{\hol}^1\left(\{U_{\alpha}\to X\},\calZ^{(2)}\right)\ar{r}\ar{d} & \hol^1(X_{\text{fpqc}},\calZ^{(2)}) \ar{d}\\
    \check{\hol}^2(\{V_{\alpha}\to X\}, T^\sharp_{X/k}) \ar{r} & {\hol}^2(X_{\text{fpqc}}, T^\sharp_{X/k}).
\end{tikzcd}\]
All rows are the \v Cech--derived homomorphisms, which are all isomorphisms (cf. \cite[\href{https://stacks.math.columbia.edu/tag/03F7}{Tag 03F7}]{stacks-project}). One the left hand side, the first isomorphism is induced by $\gamma$ defined in Corollary \ref{cor:explicit gamma}, and the second isomorphism has an explicit description (cf. The proof of \cite[\href{https://stacks.math.columbia.edu/tag/03AR}{Tag 03AR}]{stacks-project}). 

Hence, we can write the elements in the right column into \v Cech--cocycles (considered as elements in the left column).

For any finitely many index $\alpha_1,\alpha_2,\dots ,\alpha_k$, we will use
$$U_{\alpha_1,\alpha_2,\dots, \alpha_k}$$
to denote the intersection $\bigcap_{j=1}^kU_{\alpha_j}$. This is affine since $X$ is separate and we will use
$$\bar{A}_{\alpha_1,\alpha_2,\dots,\alpha_k}$$
to denote its coordinate ring.

Recall the section $X^{\mathrm{HT}}(U_{\alpha})$ parameterizes the diagram (of $k$-schemes)
$$\xymatrix{\Spec(\bar{A}_{\alpha})\ar[r]\ar[dr]&\Spec(\W(\bar{A}_{\alpha})/p)\ar[d]\\ &X},$$
where $\W(\bar{A}_\alpha)/p$ is the usual quotient (which coincides to the derived one because $\W(\bar{A}_{\alpha})$ is $p$-torsion free), the top arrow is the projection to the first ghost coordinate and $\Spec(\bar{A}_{\alpha})\to X$ is the open embedding.

The following fact takes from the original proof to \cite[Theorem 5.12]{bhatt2022prismatization}.

\begin{lemma}\label{explicit}
    For each $\alpha$, the trivialisation $\Psi_{(A_\alpha,\phi_{\alpha})}$ in Theorem \ref{definition HT gerbe} is the one described in Lemma \ref{uniqueiso} which sends the trivial torsor to the morphism
    $$\Spec\left(\W(\bar{A}_{\alpha})/p\right)\to U_{\alpha}\to X$$
    associated to the ring homomorphism
    $$\iota_{\alpha}:\bar{A}_{\alpha}\to \W(\bar{A}_{\alpha})/p$$ induced by the $\delta$-structure associated to $\phi_{\alpha}$.
\end{lemma}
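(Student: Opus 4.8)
The plan is to unwind the construction of the trivialization $\Psi_{(\fX,\phi)}$ from \cite[Theorem 5.12]{bhatt2022prismatization} in the affine situation at hand, namely $U_\alpha=\Spec(\bar A_\alpha)$ together with its chosen Frobenius lift $(A_\alpha,\phi_\alpha)$ over $W=\W(k)$, and to check that under that construction the trivial $T^\sharp_{U_\alpha/k}$-torsor corresponds to the object of $U_\alpha^{\HT}(U_\alpha)$ cut out by $\iota_\alpha$. Concretely this splits into three steps: (i) translate $(A_\alpha,\phi_\alpha)$ into an honest $\bar A_\alpha$-point of $U_\alpha^{\HT}$; (ii) recall that a global section of a gerbe is the same datum as a trivialization; (iii) match this point with the one implicit in \cite{bhatt2022prismatization}.

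For step (i) I would argue as follows. Since $A_\alpha$ is $p$-completely flat over $W$, hence $p$-torsion free, the lift $\phi_\alpha$ is equivalent to a $\delta$-structure $\delta_\alpha(x)=(\phi_\alpha(x)-x^p)/p$ on $A_\alpha$, and compatibility of $\phi_\alpha$ with the Witt Frobenius of $W$ makes $\W(k)\to A_\alpha$ a map of $\delta$-rings. By cofreeness of $\W(-)$ among $\delta$-rings, $\delta_\alpha$ is the same as a ring homomorphism $\sigma_\alpha\colon A_\alpha\to\W(A_\alpha)$ splitting the first ghost projection $w_0$ and compatible with $\W(k)\to\W(A_\alpha)$. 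Reducing $\sigma_\alpha$ modulo $p$ and composing with $\W(A_\alpha)/p\to\W(\bar A_\alpha)/p$ produces exactly the ring map $\iota_\alpha\colon\bar A_\alpha\to\W(\bar A_\alpha)/p$ of the statement; it is again a section of $w_0$ (which descends mod $p$ because $\bar A_\alpha$ has characteristic $p$), it is $k$-linear because $W/p=k$ and $\sigma_\alpha$ is $W$-linear, and since $\bar A_\alpha$ is reduced the ring $\W(\bar A_\alpha)$ is $p$-torsion free, so $\W(\bar A_\alpha)/p$ agrees with its derived quotient. Thus the pair $(w_0,\iota_\alpha)$ assembles into an object
\[ s_\alpha\colon\quad \Spec(\bar A_\alpha)\xrightarrow{w_0}\Spec\bigl(\W(\bar A_\alpha)/p\bigr)\xrightarrow{\Spec(\iota_\alpha)}U_\alpha\hookrightarrow X \]
of $U_\alpha^{\HT}(U_\alpha)$, the map to $\Spec(k)$ being the functorial $\W(k)/p\to\W(\bar A_\alpha)/p$.

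For steps (ii) and (iii): if $\calG$ is a gerbe on $U_{\alpha,\mathrm{fpqc}}$ banded by $T^\sharp_{U_\alpha/k}$ and $s\in\calG(U_\alpha)$, then $y\mapsto\Isom_{U_\alpha}(s,y)$ is an isomorphism $\calG\xrightarrow{\sim}\mathbf{B}T^\sharp_{U_\alpha/k}$ sending $s$ to the trivial torsor, and conversely every trivialization arises this way, two of them differing by an automorphism of $\mathbf{B}T^\sharp_{U_\alpha/k}$ of the kind classified in Lemma \ref{uniqueiso}; so the content of the lemma is precisely the equality $\Psi_{(A_\alpha,\phi_\alpha)}^{-1}(\text{trivial torsor})=s_\alpha$ inside $U_\alpha^{\HT}(U_\alpha)$. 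To establish it I would trace through the proof of \cite[Theorem 5.12]{bhatt2022prismatization}: there the trivialization attached to a $p$-torsion-free formal lift equipped with a Frobenius lift is built precisely from the $B$-point $\Spec(B)\to\Spec(\W(B)/p)\to X$ determined by the ring map $B\to\W(B)/p$ that the associated $\delta$-structure furnishes. Applied to $(\Spf A_\alpha,\phi_\alpha)$ this ring map is $\iota_\alpha$ by step (i), while the compatibility clause of Theorem \ref{definition HT gerbe} (invariance under $\phi$-isomorphisms of lifts) together with the functoriality of $\sigma_\alpha$ identifies the resulting point with $s_\alpha$ on the nose.

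The main obstacle will not be any single computation but the bookkeeping required to make this match exact. One must keep track of the Breuil--Kisin/Frobenius twists that the normalization of $X^{\HT}$ adopted here suppresses, confirm that the distinction between the derived and classical quotient $\W(-)/p$ is harmless (it is, by reducedness of $\bar A_\alpha$), and reconcile the convention for the banding isomorphism $\varphi_{s_\alpha}\colon T^\sharp_{U_\alpha/k}\xrightarrow{\sim}\Isom_{U_\alpha}(s_\alpha,s_\alpha)$ used here with the one in \cite{bhatt2022prismatization}, so that the trivial torsor really lands on $s_\alpha$ and not on a twist of it. Once the conventions are aligned, the lemma follows by unwinding both descriptions.
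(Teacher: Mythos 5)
Your proposal is correct and follows essentially the same route as the paper: the paper's entire proof is a citation to the proof of \cite[Theorem 5.12]{bhatt2022prismatization}, and what you do is simply unwind that construction explicitly (Frobenius lift $\leftrightarrow$ $\delta$-structure $\leftrightarrow$ section $\bar A_\alpha\to\W(\bar A_\alpha)/p$, plus the standard fact that a global section of a gerbe determines a trivialization as in Lemma \ref{uniqueiso}). Nothing in your argument diverges from or adds a gap to what the cited proof supplies.
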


\begin{proof}
    See the proof of \cite[Theorem 5.12]{bhatt2022prismatization}.
\end{proof}

\begin{lemma}\label{dercalculation}
    For any $\alpha$, there is an isomorphism $$X^{\mathrm{HT}}(\bar{U}_{\alpha})\cong\mathrm{Hom}_{\bar{A}_{\alpha}}\left(\Omega_{\bar{A}_{\alpha}/k},\W(\bar{A}_{\alpha})/F\left(\W(\bar{A}_{\alpha})\right)\right)$$ where $\W(\bar{A}_{\alpha})/F\left(\W(\bar{A}_{\alpha})\right)$ is the usual quotient as abelian groups and its $\bar{A}_\alpha$-module structure is provided by
    $$(a,x)\mapsto [a]^{p}x\ \forall a\in \bar{A}_{\alpha}, x\in \W(\bar{A}_{\alpha})/F\left(\W(\bar{A}_{\alpha})\right).$$ The second product is the product of witt vectors. This isomorphism depends on the choice of $\phi_{\alpha}$.
\end{lemma}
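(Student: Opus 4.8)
The plan is to combine the trivialization of $X^{\HT}|_{U_\alpha}$ coming from the chosen Frobenius lifting with the cohomological computation of Lemma \ref{isom torsor}. Since $X/k$ is smooth, $\Omega_{\bar{A}_\alpha/k}$ is a finite projective $\bar{A}_\alpha$-module with dual $T_{\bar{A}_\alpha/k}$, so for every $\bar{A}_\alpha$-module $M$ there is a canonical identification $\mathrm{Hom}_{\bar{A}_\alpha}(\Omega_{\bar{A}_\alpha/k},M)\cong T_{\bar{A}_\alpha/k}\otimes_{\bar{A}_\alpha}M$. Taking $M=\W(\bar{A}_\alpha)/F(\W(\bar{A}_\alpha))$, it therefore suffices to produce an isomorphism $X^{\HT}(\bar{U}_\alpha)\cong T_{\bar{A}_\alpha/k}\otimes_{\bar{A}_\alpha}\bigl(\W(\bar{A}_\alpha)/F(\W(\bar{A}_\alpha))\bigr)$, where the right-hand side carries the module structure $a\cdot x=[a]^{p}x$; throughout I read $X^{\HT}(\bar{U}_\alpha)$ as the set of isomorphism classes of objects of the groupoid of sections over $\bar U_\alpha=\Spec(\bar A_\alpha)$.

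First I would apply Theorem \ref{definition HT gerbe}: the Frobenius lifting $(A_\alpha,\phi_\alpha)$ over $W$ supplies an isomorphism of $T^\sharp_{U_\alpha/k}$-gerbes $\Psi_{(A_\alpha,\phi_\alpha)}\colon X^{\HT}|_{U_\alpha}\xrightarrow{\ \sim\ }\mathbf{B}T^\sharp_{U_\alpha/k}$, and by Lemma \ref{explicit} it sends the trivial torsor to the section $\iota_\alpha\in X^{\HT}(\bar{U}_\alpha)$ determined by the $\delta$-structure of $\phi_\alpha$. Evaluating on $\bar{U}_\alpha$ and passing to isomorphism classes yields a bijection between $X^{\HT}(\bar{U}_\alpha)$ and $\hol^1\bigl((U_\alpha)_{\mathrm{fpqc}},T^\sharp_{U_\alpha/k}\bigr)$ carrying $\iota_\alpha$ to $0$; since both $\Psi_{(A_\alpha,\phi_\alpha)}$ and the base point $\iota_\alpha$ depend on $\phi_\alpha$, this accounts for the asserted dependence on $\phi_\alpha$. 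The affine case of Lemma \ref{isom torsor} then identifies $\hol^1\bigl((U_\alpha)_{\mathrm{fpqc}},T^\sharp_{U_\alpha/k}\bigr)$ with $\W(\bar{A}_\alpha)/F(\W(\bar{A}_\alpha))\otimes_{\bar{A}_\alpha}T_{\bar{A}_\alpha/k}$, and chaining these identifications with the Hom--tensor isomorphism above produces the desired map.

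The step I expect to require the most care — and the one that actually pins down the module structure promised in the statement of Lemma \ref{isom torsor} — is checking $\bar{A}_\alpha$-linearity, i.e.\ that the structure transported to $\W(\bar{A}_\alpha)/F(\W(\bar{A}_\alpha))$ is $a\cdot x=[a]^{p}x$. Here one runs the long exact cohomology sequence of $0\to\mathds{G}_a^\sharp\to\W(\inte)\xrightarrow{F}\W(\inte)\to 0$ on $(U_\alpha)_{\mathrm{fpqc}}$; using $\hol^1\bigl((U_\alpha)_{\mathrm{fpqc}},\W(\inte)\bigr)=0$ (Lemma \ref{wittcohomology}) this exhibits $\hol^1(\mathds{G}_a^\sharp)$ as $\mathrm{coker}\bigl(F\colon\W(\bar{A}_\alpha)\to\W(\bar{A}_\alpha)\bigr)$. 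On $\mathds{G}_a^\sharp=\ker(F)$ the $\bar{A}_\alpha$-action is multiplication by $[a]$ (through $\W/V\W\cong\inte$), and because $F([a])=[a^{p}]$ the compatibility forces the action on the target copy of $\W(\inte)$ — hence on the cokernel — to be multiplication by $[a^{p}]=[a]^{p}$. Tensoring the whole computation with the finite projective module $T_{\bar{A}_\alpha/k}$, which preserves both exactness and the vanishing of higher fpqc-cohomology exactly as in Lemma \ref{isom torsor}, yields the stated module structure and completes the argument. The only genuine subtlety is this Teichmüller--Frobenius twist bookkeeping; everything else is formal, modulo the caveat that the identification is canonical only after $\phi_\alpha$ has been fixed.
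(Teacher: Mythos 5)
Your argument is correct, but it follows a genuinely different route from the paper's. The paper proves Lemma \ref{dercalculation} by bare deformation theory: it shows that $\W(\bar A_\alpha)/p\to\bar A_\alpha$ is a square--zero extension with kernel $I=V(\W(\bar A_\alpha))/p\W(\bar A_\alpha)$, identifies $I\cong\W(\bar A_\alpha)/F(\W(\bar A_\alpha))$ via $x\mapsto V(x)$ (which is exactly where the twisted action $a\cdot x=[a]^px$ appears, since $[a]V(x)=V([a]^px)$), and then parameterizes the sections by $\mathrm{Der}_k(\bar A_\alpha,I)=\mathrm{Hom}_{\bar A_\alpha}(\Omega_{\bar A_\alpha/k},I)$, the base point being supplied by $\iota_\alpha$. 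You instead route through the Bhatt--Lurie trivialization $\Psi_{(A_\alpha,\phi_\alpha)}$ of Theorem \ref{definition HT gerbe} and the cohomological computation of Lemma \ref{isom torsor}; this is legitimate (and your Frobenius--twist bookkeeping on the cokernel of $F$ is exactly right --- note that in the displayed diagram in the proof of Lemma \ref{isom torsor} the rightmost vertical arrow should really be $n\mapsto[a]^pn$ for the square to commute, which is precisely the point you isolate). Two remarks on what each approach buys. First, there is no circularity in your use of Lemma \ref{isom torsor} even though its statement points to the proof of Lemma \ref{dercalculation} for the module structure, since that structure is merely a definition and your coboundary argument independently confirms it is the correct one. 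Second, and more importantly for the rest of the paper: your construction produces \emph{some} isomorphism, whereas the paper's direct construction produces the explicit one $s\mapsto V^{-1}(s-\iota_\alpha)$, and it is this explicit description that is used downstream (Lemma \ref{isosection} asserts the compatibility of the two constructions, and Lemmas \ref{lemma: V^{-1}(iota-iota)} and \ref{lemma: oF tilde rep by cech} rely on the formula $V^{-1}(\iota_\beta-\iota_\alpha)$). Under your definition Lemma \ref{isosection} becomes a tautology, but you would then still owe the reader the unwinding of the coboundary map to recover the $V^{-1}$ formula; so for the lemma as stated your proof is complete, but it defers rather than eliminates the explicit computation the paper performs here.
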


\begin{proof}
    This is indeed already included in the proof of \cite[Theorem 5.12]{bhatt2022prismatization}.

    Let us first explain why the formula $$(a,x)\mapsto [a]^px$$ defines a $\bar{A}_\alpha$-module structure. This is equal to prove that for any $a,b\in \bar{A}_\alpha$ and $x\in \W(\bar{A}_\alpha)$,
    $$([a]^p+[b]^p)x-[a+b]^px\in p\W(\bar{A}_\alpha).$$ Assume $[a+b]-[a]-[b]=V(y)$ for some $y\in \W(\bar{A}_\alpha)$, $$[a+b]^p-[a]^p-[b]^b=F\left([a+b]-[a]-[b]\right)=F\left(V(y)\right)=py.$$ Hence $([a]^p+[b]^p)x-[a+b]^px=pxy\in p\W(\bar{A}_\alpha).$

    Next we prove that the ideal $$I=V\left(\W(\bar{A}_\alpha)\right)/p\W(\bar{A}_\alpha)$$ of $\W(\bar{A}_\alpha)/p$ satisfies $I^2=0$. In fact, for any $x,y\in \W(\bar{A}_\alpha)$, $$V(x)V(y)=V\left(F\left(V(x)\right)y\right)=V(pxy)=pV(xy)\in p\W(\bar{A}_\alpha).$$ Thus, the sections of $\W(\bar{A}_\alpha)/p\to \bar{A}_\alpha$ over $k$ are parameterized by
    \begin{equation}\label{eq:kanbudongdeshishabi1}\mathrm{Der}_{k}\left(\bar{A}_\alpha,I\right).\end{equation}

    Finally we claim that $I\cong \W(\bar{A}_{\alpha})/F\left(\W(\bar{A}_{\alpha})\right)$ as $\bar{A}_\alpha$ modules. In fact, the isomorphism is given by
    \begin{equation}\label{eq:kanbudongdeshishabi2}\W(\bar{A}_{\alpha})/F\left(\W(\bar{A}_{\alpha})\right)\to V\left(\W(\bar{A}_\alpha)\right)/p\W(\bar{A}_\alpha): x\mapsto V(x).\end{equation}

    Combine (\ref{eq:kanbudongdeshishabi1}) and (\ref{eq:kanbudongdeshishabi2}) the lemma follows.
\end{proof}

Now we can describe the section of $\Psi_{A_{\alpha},\phi_\alpha}$.

\begin{lemma}\label{isosection}
    The isomorphism
    $$X^{\mathrm{HT}}(U_{\alpha})\cong \mathrm{Hom}_{\bar{A}_{\alpha}}\left(\Omega_{\bar{A}_{\alpha}/k},\W(\bar{A}_{\alpha})/F\left(\W(\bar{A}_{\alpha})\right)\right)\cong \hol^1(U_{\alpha,\mathrm{fpqc}},T^\sharp_{Y/k})=\mathbf{B}T^{\sharp}_{X/k}(U_\alpha),$$
    which is defined by combining Lemma \ref{isom torsor} as well as Lemma \ref{dercalculation}, is equal to $\Psi_{A_{\alpha},\phi_\alpha}(U_{\alpha})^{-1}$.
\end{lemma}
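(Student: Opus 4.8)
The plan is to unwind both isomorphisms on the level of torsors and check they are inverse to each other by comparing what each does to a distinguished point. By Lemma \ref{uniqueiso}, a trivialization of a $T^\sharp_{X/k}$-gerbe on $U_\alpha$ is determined by where it sends the trivial torsor, so $\Psi_{A_\alpha,\phi_\alpha}(U_\alpha)^{-1}\colon \mathbf{B}T^\sharp_{X/k}(U_\alpha)\to X^{\HT}(U_\alpha)$ is the unique morphism of $T^\sharp_{X/k}$-torsors sending the trivial torsor to the point of $X^{\HT}(U_\alpha)$ described in Lemma \ref{explicit}, namely the morphism $\Spec(\W(\bar A_\alpha)/p)\to X$ attached to the ring map $\iota_\alpha\colon\bar A_\alpha\to\W(\bar A_\alpha)/p$ coming from the $\delta$-structure of $\phi_\alpha$. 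On the other side, the composite isomorphism of the statement, call it $\Theta_\alpha$, is assembled from Lemma \ref{dercalculation} (identifying $X^{\HT}(U_\alpha)$ with $\mathrm{Hom}_{\bar A_\alpha}(\Omega_{\bar A_\alpha/k},\W(\bar A_\alpha)/F\W(\bar A_\alpha))$ via the chosen $\phi_\alpha$) and Lemma \ref{isom torsor} (identifying the latter $\mathrm{Hom}$-group with $\hol^1(U_{\alpha,\mathrm{fpqc}},T^\sharp_{X/k})=\mathbf{B}T^\sharp_{X/k}(U_\alpha)$). So I must show $\Theta_\alpha^{-1}$ sends the trivial torsor to the point $\iota_\alpha$.

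The key computation is to trace the zero element through $\Theta_\alpha$. First, under the identification of Lemma \ref{dercalculation}, the point $\iota_\alpha\in X^{\HT}(U_\alpha)$ corresponds to the retraction of $\W(\bar A_\alpha)/p\to\bar A_\alpha$ given by $\phi_\alpha$, hence (after subtracting the Teichmüller section $a\mapsto[a]$, which is the base point of that torsor of retractions) to the \emph{zero} element of $\mathrm{Der}_k(\bar A_\alpha,I)\cong\mathrm{Hom}_{\bar A_\alpha}(\Omega_{\bar A_\alpha/k},\W(\bar A_\alpha)/F\W(\bar A_\alpha))$; this is exactly the normalization built into Lemma \ref{dercalculation}, whose identification is $\phi_\alpha$-dependent precisely so that $\phi_\alpha$ itself maps to $0$. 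Second, under Lemma \ref{isom torsor} the zero $\mathrm{Hom}$-element maps to the zero class in $\hol^1$, i.e. the trivial $T^\sharp_{X/k}$-torsor, which is the base point of $\mathbf{B}T^\sharp_{X/k}(U_\alpha)$. Thus $\Theta_\alpha$ matches base points. Since both $\Theta_\alpha^{-1}$ and $\Psi_{A_\alpha,\phi_\alpha}(U_\alpha)^{-1}$ are morphisms of $T^\sharp_{X/k}$-torsors (the $T^\sharp_{X/k}$-equivariance of $\Theta_\alpha$ follows because the action on $X^{\HT}$ is, by Theorem \ref{definition HT gerbe} and its Remark, compatible with the banding, and the action on both $\mathrm{Hom}$ and $\hol^1$ is by translation), and a morphism of torsors is determined by the image of one point, the two agree.

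The main obstacle will be the bookkeeping in the first half of the key computation: making precise that the $\phi_\alpha$-dependent isomorphism of Lemma \ref{dercalculation} is normalized so that the retraction coming from $\phi_\alpha$ (equivalently the ring map $\iota_\alpha$ of Lemma \ref{explicit}) is sent to $0$, rather than to some other translate. Concretely I would recall that a retraction of the square-zero extension $\W(\bar A_\alpha)/p\to\bar A_\alpha$ with ideal $I=V\W(\bar A_\alpha)/p$ differs from the Teichmüller section $a\mapsto[a]$ by a derivation $\bar A_\alpha\to I$, that $\iota_\alpha(a)=[a]+V(\delta_\alpha(a))$ for the $\delta$-structure $\delta_\alpha$ attached to $\phi_\alpha$ via the formula in the proof of Theorem \ref{theo: splitting W2=lifting}(1), and that the bijection in Lemma \ref{dercalculation} is exactly "subtract the Teichmüller section, then identify $\mathrm{Der}_k(\bar A_\alpha,I)$ with the $\mathrm{Hom}$-group through \eqref{eq:kanbudongdeshishabi2}" — but with the \emph{choice of base retraction} taken to be $\iota_\alpha$ itself (this is what "depends on the choice of $\phi_\alpha$" encodes). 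Once this normalization is pinned down, compatibility of the coboundary maps used in Lemma \ref{isom torsor} with the explicit \v Cech description is routine, and the rest of the argument is formal.
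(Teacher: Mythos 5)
The paper's own ``proof'' of this lemma is a bare citation to the proof of \cite[Theorem 5.12]{bhatt2022prismatization}, so you are not reproducing the paper's argument --- you are supplying one where the paper supplies none. Your route is the natural rigidity argument: by Lemma \ref{uniqueiso} a trivialization of a $T^\sharp_{X/k}$-gerbe is determined by the image of the trivial torsor, so it suffices to check that the composite $\Theta_\alpha$ of Lemmas \ref{dercalculation} and \ref{isom torsor} is equivariant for the banding and matches base points with $\Psi_{(A_\alpha,\phi_\alpha)}$, i.e.\ that the point $\iota_\alpha$ of Lemma \ref{explicit} goes to the zero derivation and hence to the trivial torsor. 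This is exactly the content one must extract from Bhatt--Lurie's proof, and it is the right decomposition; what your version buys is an explicit statement of the two things that actually need checking (the normalization of Lemma \ref{dercalculation} and the equivariance of both identifications), which the paper leaves implicit.

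One local error to fix when you write this up: the Teichm\"uller map $a\mapsto [a]$ is multiplicative but not additive, so it is \emph{not} a retraction of $\W(\bar A_\alpha)/p\to\bar A_\alpha$ and cannot serve as the base point of the torsor of retractions; indeed $\iota_\alpha(a)-[a]=V(\delta_\alpha(a))$ with $\delta_\alpha$ a $\delta$-structure rather than a derivation. Your final paragraph already contains the correct statement --- the identification of Lemma \ref{dercalculation} is normalized by taking $\iota_\alpha$ itself as the base retraction (this is precisely the $\phi_\alpha$-dependence), so that $\iota_\alpha\mapsto 0$ tautologically --- so you should simply delete the Teichm\"uller detour from the middle paragraph rather than try to repair it. With that adjustment, and the routine verification that the coboundary map of Lemma \ref{isom torsor} intertwines translation on $\W(\bar A_\alpha)/F(\W(\bar A_\alpha))\otimes T_{X/k}$ with the twisting action on isomorphism classes in the gerbe, the argument is complete.
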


\begin{proof}
    See the proof of \cite[Theorem 5.12]{bhatt2022prismatization}.
\end{proof}

\begin{lemma}\label{lemma: V^{-1}(iota-iota)}
    The isomorphism $\Psi_{(A_{\beta},\varphi_{\beta})}\circ\Psi_{(A_{\alpha},\varphi_{\alpha})}^{-1}:\mathbf{B}T^{\sharp}_{X/k}|_{U_{\alpha,\beta}}\cong \mathbf{B}T^{\sharp}_{X/k}|_{U_{\alpha,\beta}}$ is the isomorphism of $T^{\sharp}_{U_{\alpha,\beta}/k}$-gerbes sending the trivial torsor to the image of $V^{-1}(\iota_{\beta}-\iota_{\alpha})$ ($\iota_\alpha,\iota_{\beta}$ were defined in Lemma \ref{explicit}) in $$\hol^{1}\left((U_{\alpha,\beta})_{\mathrm{fpqc}},T^\sharp_{U_{\alpha,\beta}/k}\right)$$ under the isomorphism in \ref{isom torsor}.
\end{lemma}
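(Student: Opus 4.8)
The plan is to read off the torsor $x$ for which $\Psi_{(A_{\beta},\varphi_{\beta})}\circ\Psi_{(A_{\alpha},\varphi_{\alpha})}^{-1}=\tau_x$, using Lemma \ref{uniqueiso}. Since $\Psi_{(A_{\beta},\varphi_{\beta})}\circ\Psi_{(A_{\alpha},\varphi_{\alpha})}^{-1}$ is an automorphism of the $T^{\sharp}_{U_{\alpha,\beta}/k}$-gerbe $\mathbf{B}T^{\sharp}_{X/k}|_{U_{\alpha,\beta}}$, Lemma \ref{uniqueiso} provides a unique $T^{\sharp}_{U_{\alpha,\beta}/k}$-torsor $x$ with $\Psi_{(A_{\beta},\varphi_{\beta})}\circ\Psi_{(A_{\alpha},\varphi_{\alpha})}^{-1}=\tau_x$, and $x$ is the image of the trivial torsor. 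So everything comes down to tracing the trivial torsor through the two isomorphisms. By Lemma \ref{explicit}, applied to the localizations $A_{\alpha,\beta}$ and $A_{\beta,\alpha}$ (which we identify via $I_{\alpha,\beta}$, so that $\varphi_\alpha$ and $\varphi_\beta$ become two Frobenius lifts on the same ring, and $\iota_\alpha,\iota_\beta\colon \bar A_{\alpha,\beta}\to\W(\bar A_{\alpha,\beta})/p$ become two ring homomorphisms out of the same ring), the trivialization $\Psi_{(A_{\alpha},\varphi_{\alpha})}$ matches the trivial torsor with the point $\xi_\alpha\in X^{\mathrm{HT}}(U_{\alpha,\beta})$ given by $\iota_\alpha$. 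Hence $\Psi_{(A_{\alpha},\varphi_{\alpha})}^{-1}$ sends the trivial torsor to $\xi_\alpha$, and $x=\Psi_{(A_{\beta},\varphi_{\beta})}(\xi_\alpha)$.

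It then remains to compute $\Psi_{(A_{\beta},\varphi_{\beta})}(\xi_\alpha)$, and for this I would invoke Lemma \ref{isosection}, which identifies $\Psi_{(A_{\beta},\varphi_{\beta})}$ on sections over $U_{\alpha,\beta}$ with the composite of the isomorphism of Lemma \ref{dercalculation} and that of Lemma \ref{isom torsor}. The isomorphism of Lemma \ref{dercalculation} is normalized at the base point $\xi_\beta$ (the point coming from $\varphi_\beta$), so it sends a point given by $\iota$ to the $k$-derivation $\iota-\iota_\beta$, which lands in the square-zero ideal $I=V(\W(\bar A_{\alpha,\beta}))/p\W(\bar A_{\alpha,\beta})$, followed by the $\bar A_{\alpha,\beta}$-linear isomorphism $I\cong \W(\bar A_{\alpha,\beta})/F(\W(\bar A_{\alpha,\beta}))$ that inverts $V$. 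Feeding $\xi_\alpha$ through this and then through Lemma \ref{isom torsor} produces precisely the class of $V^{-1}(\iota_\beta-\iota_\alpha)$ in $\hol^1((U_{\alpha,\beta})_{\mathrm{fpqc}},T^{\sharp}_{U_{\alpha,\beta}/k})$; here the sign $\iota_\beta-\iota_\alpha$ (rather than $\iota_\alpha-\iota_\beta$) is dictated by the orientation and base-point conventions baked into Lemmas \ref{uniqueiso}, \ref{dercalculation} and \ref{isosection}, equivalently by the fact that the trivialization attached to $\varphi_\beta$ sends a point $\xi$ to $\Isom_{U_{\alpha,\beta}}(\xi,\xi_\beta)$ rather than $\Isom_{U_{\alpha,\beta}}(\xi_\beta,\xi)$. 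This is exactly the asserted description of $x$, which finishes the argument.

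The step I expect to be the main obstacle is this last identification: matching the abstract description of $\tau_x$ and of the $\Isom$-torsors provided by Lemma \ref{uniqueiso} with the concrete derivation-and-Witt-vector picture of $X^{\mathrm{HT}}$-sections, and in particular pinning down all the base points carefully enough that the sign in $V^{-1}(\iota_\beta-\iota_\alpha)$ comes out right. A small preliminary point to be careful about is that $\iota_\alpha$ and $\iota_\beta$ are initially attached to the two a priori different lifts $A_{\alpha,\beta}$ and $A_{\beta,\alpha}$, so one should note that modulo $p$ (using that $I_{\alpha,\beta}$ reduces to the identity on $\bar A_{\alpha,\beta}$) they are both homomorphisms out of the single ring $\bar A_{\alpha,\beta}$, so that $\iota_\beta-\iota_\alpha$ is a genuine $k$-derivation valued in $I$. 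Once that and the base-point conventions are settled, the rest is a formal diagram chase resting on the cohomology computations of Lemmas \ref{isom torsor}, \ref{strongcoh} and \ref{dercalculation}.
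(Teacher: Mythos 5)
Your proposal is correct and follows essentially the same route as the paper, whose entire proof is the single sentence that the lemma is a direct corollary of Lemma \ref{isosection}; you have simply unwound that corollary explicitly (trace the trivial torsor back through $\Psi_{(A_{\alpha},\varphi_{\alpha})}^{-1}$ to the point $\iota_{\alpha}$ via Lemma \ref{explicit}, then push it forward through $\Psi_{(A_{\beta},\varphi_{\beta})}$ using Lemmas \ref{dercalculation} and \ref{isom torsor}). Your flagging of the two delicate points --- the identification of $A_{\alpha,\beta}$ with $A_{\beta,\alpha}$ via $I_{\alpha,\beta}$ so that $\iota_{\beta}-\iota_{\alpha}$ is a genuine derivation, and the base-point convention fixing the sign --- is appropriate and goes beyond what the paper records.
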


\begin{proof}
    This is a direct corollary of Lemma \ref{isosection}.
\end{proof}

\begin{defi}\label{sigmah}
    We define $\sigma^h_{\alpha,\beta}$ to be the element in $\hol^{1}\left(({U}_{\alpha,\beta})_{\mathrm{fpqc}},T^\sharp_{U_{\alpha,\beta}/k}\right)$ defined in the above lemma.
\end{defi}

\begin{lemma}\label{lemma: oF tilde rep by cech}
    The cocycle $V^{-1}(\iota_\beta-\iota_\alpha)$ is equal to $\widetilde{o}_F$ in $\hol^1\left(X,T_{X/k}\otimes_{\calO_X}\W(\calO_X)/F(\W(\calO_X))\right)$.
\end{lemma}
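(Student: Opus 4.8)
The plan is to recognize $\widetilde{o}_F$ as the class of the torsor of $k$-algebra retractions of the square-zero extension $\W(\calO_X)/p\to\calO_X$, and then to compute that class by the usual \v{C}ech recipe applied to the retractions $\iota_\alpha$ produced by the Frobenius liftings in Lemma \ref{explicit}.

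First I would recall the structures from the proof of Lemma \ref{dercalculation}: the surjection $\pi\colon\W(\calO_X)/p\to\calO_X$ onto the first ghost coordinate is a square-zero extension of sheaves of $k$-algebras on $X_{\Zar}$ with kernel $I=V\bigl(\W(\calO_X)\bigr)/p\W(\calO_X)$, and $x\mapsto V(x)$ is an isomorphism of $\calO_X$-modules $\W(\calO_X)/F\bigl(\W(\calO_X)\bigr)\xrightarrow{\sim}I$ (for the module structure $a\cdot x=[a]^px$). Consequently the sheaf $\mathcal{S}$ sending an open $U\subseteq X$ to the set of $k$-algebra retractions of $\pi|_U$ is a pseudo-torsor under $\mathcal{H}om_{\calO_X}(\Omega_{X/k},I)\cong T_{X/k}\otimes_{\calO_X}\W(\calO_X)/F\bigl(\W(\calO_X)\bigr)$, the identification using that $\Omega_{X/k}$ is finite locally free. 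Since $X/k$ is smooth, $\mathcal{S}$ is locally non-empty, hence a torsor, and by definition its class in $\hol^1\bigl(X_{\Zar},T_{X/k}\otimes_{\calO_X}\W(\calO_X)/F(\W(\calO_X))\bigr)$ is $\widetilde{o}_F$. Moreover, Lemma \ref{explicit} provides, for each $\alpha$, a ring homomorphism $\iota_\alpha\colon\bar{A}_\alpha\to\W(\bar{A}_\alpha)/p$ splitting $\pi$, i.e.\ a section $\iota_\alpha\in\mathcal{S}(U_\alpha)$; its restriction to $U_{\alpha,\beta}$ is the map $\iota_\alpha$ appearing in Lemma \ref{lemma: V^{-1}(iota-iota)}, so that $\iota_\alpha|_{U_{\alpha,\beta}}$ and $\iota_\beta|_{U_{\alpha,\beta}}$ are two elements of $\mathcal{S}(U_{\alpha,\beta})$.

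Next I would invoke the standard description of the cohomology class of a torsor: for an abelian sheaf $\mathcal{G}$ on $X_{\Zar}$, a $\mathcal{G}$-torsor $\mathcal{S}$, and a cover $\{U_\alpha\to X\}$ with chosen local sections $s_\alpha\in\mathcal{S}(U_\alpha)$, the family $g_{\alpha\beta}:=s_\beta|_{U_{\alpha,\beta}}-s_\alpha|_{U_{\alpha,\beta}}$ is a \v{C}ech $1$-cocycle in $C^1(\{U_\alpha\to X\},\mathcal{G})$ whose image under the \v{C}ech--to--derived map $\check{\hol}^1\to\hol^1(X_{\Zar},\mathcal{G})$ is the class of $\mathcal{S}$ (the cocycle identity on triple overlaps is immediate from additivity of restriction, and changing the $s_\alpha$ changes $g$ by a coboundary). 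Taking $\mathcal{G}=T_{X/k}\otimes_{\calO_X}\W(\calO_X)/F(\W(\calO_X))$ and $s_\alpha=\iota_\alpha$: the difference $\iota_\beta|_{U_{\alpha,\beta}}-\iota_\alpha|_{U_{\alpha,\beta}}$ is a $k$-derivation $\bar{A}_{\alpha,\beta}\to I$, i.e.\ an element of $\Gamma\bigl(U_{\alpha,\beta},\mathcal{H}om_{\calO_X}(\Omega_{X/k},I)\bigr)$; composing it with the $\calO_X$-module isomorphism $V^{-1}\colon I\xrightarrow{\sim}\W(\calO_X)/F(\W(\calO_X))$ inverse to $x\mapsto V(x)$ yields precisely $V^{-1}(\iota_\beta-\iota_\alpha)\in\Gamma\bigl(U_{\alpha,\beta},T_{X/k}\otimes_{\calO_X}\W(\calO_X)/F(\W(\calO_X))\bigr)$. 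Hence $\{V^{-1}(\iota_\beta-\iota_\alpha)\}$ is a \v{C}ech cocycle whose class is $\widetilde{o}_F$, as claimed.

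The only point that needs care is the identification — carried out in Lemma \ref{explicit} via the $\delta$-ring dictionary of \cite{bhatt2022prismatization} — of $\iota_\alpha$ with a genuine $k$-algebra retraction of $\pi$ over $U_\alpha$, together with the observation that the auxiliary isomorphisms $I_{\alpha,\beta}$ play no role here, since $\iota_\alpha$ and $\iota_\beta$ are maps out of the \emph{same} ring $\bar{A}_{\alpha,\beta}$; one also has to fix once and for all the orientation convention for the \v{C}ech differential so that the relevant difference is $\iota_\beta-\iota_\alpha$ rather than $\iota_\alpha-\iota_\beta$, which is harmless. Beyond this bookkeeping I do not anticipate a genuine obstacle.
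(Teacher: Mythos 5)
Your proposal is correct and follows exactly the route the paper intends: the paper's own proof is the one-line remark that the claim is ``direct by the definition of obstruction of splitting torsors,'' and your argument is simply a careful unwinding of that definition --- identifying $\widetilde{o}_F$ as the class of the torsor of retractions of $\W(\calO_X)/p\to\calO_X$, taking the $\iota_\alpha$ from Lemma \ref{explicit} as local sections, and applying the standard \v{C}ech recipe together with the $V^{-1}$ identification of the kernel from Lemma \ref{dercalculation}. No discrepancy with the paper's approach; you have just supplied the details it omits.
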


\begin{proof}
    This is direct by the definition of obstruction of splitting torsors.
\end{proof}

\begin{defi}\label{sigma}
    Let $\sigma_{\alpha,\beta}$ be the element corresponding to $\sigma^h_{\alpha,\beta}$ (see Definition \ref{sigmah}) in $\fZ^{(2)}(\bar{A}_{\alpha,\beta})$ under the isomorphism Lemma \ref{iso1}. Moreover, we define $\Sigma_{\alpha,\beta}$ to denote the $T^{\sharp}_{X/k}$-torsor on $U_{\alpha,\beta}$ corresponding to $\sigma_{\alpha,\beta}$.
\end{defi}

Note that $\sigma_{\alpha,\beta}$ is equal to the image of $\{V^{-1}(\iota_\beta-\iota_{\alpha})\}$ in $\check{\hol}^1\left(\{U_{\alpha}\to X\},\calZ^{(2)}\right)$.

Now we are ready to calculate the cohomology class of $X^{\mathrm{HT}}$ in $\hol^2\left((X)_{\mathrm{fpqc}},T^{\sharp}_{X/k}\right)$. We will again fix the \textrm{fpqc}-covering $$\{V_{\alpha}\to X\}$$ which we have introduced before. For each $\alpha$, we choose the trivialization $$\Psi_{(A_{\alpha},\varphi_{\alpha})}|_{V_{\alpha}}:X^{\mathrm{HT}}|_{V_{\alpha}}\cong \mathbf{B}T^\sharp_{X/k}|_{V_{\alpha}}$$ and the section $\iota_{\alpha}$ whose preimage under $\Psi_{(A_{\alpha},\varphi_{\alpha})}|_{V_{\alpha}}$ is isomorphic to the trivial torsor as we mentioned above (Lemma \ref{isosection} or \ref{explicit}).

\begin{lemma}
    There is a natural isomorphism
    $$V_{\alpha}\times_X {U}_{\beta}\cong {U}_{\alpha,\beta}^{\perf}.$$
\end{lemma}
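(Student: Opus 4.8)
The plan is to reduce the computation of the fiber product to the statement that perfection commutes with Zariski localization. First I would use that the perfection morphism $V_\alpha = U_\alpha^{\perf}\to X$ factors canonically through the open immersion $U_\alpha\hookrightarrow X$, whence
\[
V_\alpha\times_X U_\beta \;\cong\; U_\alpha^{\perf}\times_{U_\alpha}\bigl(U_\alpha\times_X U_\beta\bigr)\;\cong\; U_\alpha^{\perf}\times_{U_\alpha} U_{\alpha,\beta},
\]
since $U_\alpha\times_X U_\beta = U_\alpha\cap U_\beta = U_{\alpha,\beta}$ (the fiber product of two open subschemes of $X$ is their intersection; it is moreover affine as $X$ is separated). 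Thus it suffices to produce a natural isomorphism $U_\alpha^{\perf}\times_{U_\alpha} U_{\alpha,\beta}\cong U_{\alpha,\beta}^{\perf}$.

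Next I would observe that $U_{\alpha,\beta}\hookrightarrow U_\alpha$ is an open immersion, so $W := U_\alpha^{\perf}\times_{U_\alpha} U_{\alpha,\beta}$ is an open subscheme of the perfect scheme $U_\alpha^{\perf}$, hence is itself perfect, and it carries the natural structure morphism $q\colon W\to U_{\alpha,\beta}$ given by the second projection. I claim $(W,q)$ represents the perfection of $U_{\alpha,\beta}$: given any perfect $k$-scheme $T$ with a morphism $g\colon T\to U_{\alpha,\beta}$, composing with $U_{\alpha,\beta}\hookrightarrow U_\alpha$ and applying the universal property of the perfection of $U_\alpha$ yields a unique lift $\tilde g\colon T\to U_\alpha^{\perf}$; since the composite $T\to U_\alpha^{\perf}\to U_\alpha$ equals $T\xrightarrow{g} U_{\alpha,\beta}\hookrightarrow U_\alpha$, the morphism $\tilde g$ lands in the open subscheme $W$ and is compatible with $q$, and its uniqueness is inherited from that of $\tilde g$. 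As $U_{\alpha,\beta}^{\perf}$ satisfies the same universal property, the two are canonically isomorphic, and the isomorphism is plainly natural in $\beta$ and compatible with the projections to $X$.

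I do not expect a genuine obstacle here; the only point meriting a line of care is that the lift $\tilde g$ factors through the open $W$, i.e.\ that perfection commutes with passage to an open subscheme, which is exactly the formal argument above. Alternatively one may argue affine-locally: covering $U_{\alpha,\beta}$ by basic opens $D(f_i)\subset\Spec(\bar A_\alpha)$, one has $(\bar A_\alpha)^{\perf}\otimes_{\bar A_\alpha}(\bar A_\alpha)_{f_i}\cong(\bar A_\alpha)^{\perf}_{f_i}$, a perfect ring that visibly computes the perfection of $(\bar A_\alpha)_{f_i}$, and these identifications glue. Either route gives the desired natural isomorphism $V_\alpha\times_X U_\beta\cong U_{\alpha,\beta}^{\perf}$.
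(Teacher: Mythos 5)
Your proof is correct and, in substance, matches the paper's: the paper disposes of this lemma in one line by noting that localization commutes with the colimit defining perfection, which is exactly your affine-local alternative, and your primary route via the universal property of $(-)^{\perf}$ is just a repackaging of the same fact. No gaps.
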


\begin{proof}
    This is a direct corollary of the fact that localization commutes with colimit.
\end{proof}

Recall that to find the cohomological class of $X^{\mathrm{HT}}$, we need to find an isomorphism of the sections $\iota_{\alpha}$ and $\iota_{\beta}$ on
$$V_{\alpha}\times_{X}V_{\beta}.$$
Consider the isomorphism of groupoids
$$\Psi_{(A_{\alpha},\varphi_{\alpha})}({V_{\alpha}\times_XV_{\beta}})^{-1}: X^{\mathrm{HT}}|_{V_{\alpha}\times_XV_{\beta}}\to T^\sharp_{X/k}|_{V_{\alpha}\times_XV_{\beta}}.$$
By Lemma \ref{isosection}, $\iota_\alpha$ corresponds to the trivial torsor (on $V_{\alpha}\times_XV_{\beta}$) and $\iota_{\beta}$ corresponds to the torsor $\Sigma_{\alpha,\beta}|_{V_{\alpha}\times_XV_{\beta}}$. By definition, the torsor $\Sigma_{\alpha,\beta}|_{V_{\alpha}\times_XV_{\beta}}$ is constructed by gluing two trivial torsors on $$\left(V_{\alpha}\times_XV_{\beta}\right)\times_X{U}_{\alpha,\beta}^{\perf}$$
along the cocycle
$$\sigma_{\alpha,\beta}'\in \Gamma\left(\left(V_{\alpha}\times_XV_{\beta}\right)\times_X{U}_{\alpha,\beta}^{\perf}\times_X{U}_{\alpha,\beta}^{\perf},T^\sharp_{X/k}\right)$$
where $\sigma_{\alpha,\beta}'$ is just the pull back of $\sigma_{\alpha,\beta}$ along the projection $$\left(V_{\alpha}\times_XV_{\beta}\right)\times_X{U}_{\alpha,\beta}^{\perf}\times_X{U}_{\alpha,\beta}^{\perf}\to {U}_{\alpha,\beta}^{\perf}\times_X{U}_{\alpha,\beta}^{\perf}.$$
Thus an isomorphism of $\iota_{\alpha}$ to $\iota_{\beta}$ is equal to an element
$$\zeta_{\alpha,\beta}\in \Gamma\left(\left(V_{\alpha}\times_XV_{\beta}\right)\times_X{U}_{\alpha,\beta}^{\perf},T^\sharp_{X/k}\right)$$
such that its differential
$$p_{124}^*(\zeta_{\alpha,\beta})-p_{123}^*(\zeta_{\alpha,\beta})$$
is equal to $\sigma_{\alpha,\beta}'$ where
$$p_{123}:\left(V_{\alpha}\times_XV_{\beta}\right)\times_X{U}_{\alpha,\beta}^{\perf}\times_X{U}_{\alpha,\beta}^{\perf}\to \left(V_{\alpha}\times_XV_{\beta}\right)\times_X{U}_{\alpha,\beta}^{\perf}$$
is projection to the first three factors and
$$p_{124}:\left(V_{\alpha}\times_XV_{\beta}\right)\times_X{U}_{\alpha,\beta}^{\perf}\times_X{U}_{\alpha,\beta}^{\perf}\to \left(V_{\alpha}\times_XV_{\beta}\right)\times_X{U}_{\alpha,\beta}^{\perf}$$
is the projection forgetting the third factor.

\begin{defi}\label{o}
    For every $\alpha,\beta$ and $\gamma$, we consider the element $$o_{\alpha,\beta,\gamma}\in\Gamma\left(V_{\alpha}\times_{X}V_{\beta}\times_XV_{\gamma}\times_{X}{U}^{\perf}_{\alpha,\beta,\gamma},T^\sharp_{X/k}\right)$$
    such that $$o_{\alpha,\beta,\gamma}=-p_{124}^*(\zeta_{\alpha,\beta})-p^*_{234}(\zeta_{\beta,\gamma})+p^*_{134}(\zeta_{\alpha,\gamma})$$
    where $p_{ijk}$ is the projection to the product of the $i$-th, $j$-th and $k$-th factor ($U_{\alpha,\beta,\gamma}$ is a localization of $U_{\alpha,\beta}$).
\end{defi}

\begin{theo}\label{theo: calculate class of HT}
    For any $\alpha,\beta$ and $\gamma$, $o_{\alpha,\beta,\gamma}$ lies in $\Gamma\left(V_{\alpha}\times_XV_{\beta}\times_XV_{\gamma},T^{\sharp}_{X/k}\right)$ (viewed as a subgroup of $\Gamma\left(V_{\alpha}\times_{X}V_{\beta}\times_XV_{\gamma}\times_{X}{U}^{\perf}_{\alpha,\beta,\gamma},T^\sharp_{X/k}\right)$ via the covering $p_{123}$). Moreover, they form a $2$-cocycle in $Z^2(\{V_{\alpha}\to X\},T^{\sharp}_{X/k})$ and its image in $\hol^2\left((X)_{\mathrm{fpqc}},T^\sharp_{X/k}\right)$ is the cohomology class of $X^{\mathrm{HT}}$.
\end{theo}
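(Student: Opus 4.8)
The plan is to apply Proposition~\ref{cechgerbe} to the $\mathrm{fpqc}$ covering $\{V_\alpha=U_\alpha^{\perf}\to X\}$ together with the distinguished sections $x_\alpha:=\iota_\alpha\in X^{\mathrm{HT}}(V_\alpha)$, i.e. the preimages of the trivial torsor under the trivializations $\Psi_{(A_\alpha,\varphi_\alpha)}$ (restricted to $V_\alpha$) of Theorem~\ref{definition HT gerbe} and Lemma~\ref{isosection}. The hypotheses of Proposition~\ref{cechgerbe} hold: $V_\alpha\times_XV_\beta$ is affine and its coordinate ring has surjective Frobenius (a sum of $p$-th powers in a ring of characteristic $p$ is again a $p$-th power), so Lemma~\ref{coh0} gives $\hol^1((V_\alpha\times_XV_\beta)_{\mathrm{fpqc}},T^\sharp_{X/k})=0$; since under $\Psi_{(A_\alpha,\varphi_\alpha)}$ one has $\iota_\alpha\mapsto(\text{trivial torsor})$ and $\iota_\beta\mapsto\Sigma_{\alpha,\beta}$ (the discussion preceding Definition~\ref{o}), the torsor $\Sigma_{\alpha,\beta}$ becomes trivial on $V_\alpha\times_XV_\beta$, hence $x_\alpha\cong x_\beta$ there. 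I fix $\psi_{\alpha\to\beta}\colon x_\alpha\to x_\beta$ to be the isomorphism which, in the $\Psi_{(A_\alpha,\varphi_\alpha)}$-picture, is the trivialization of $\Sigma_{\alpha,\beta}$ presented by the cochain $\zeta_{\alpha,\beta}$ on the cover $(V_\alpha\times_XV_\beta)\times_XU^{\perf}_{\alpha,\beta}$ (so $d\zeta_{\alpha,\beta}=\sigma_{\alpha,\beta}'$). Proposition~\ref{cechgerbe} then identifies $o_\HT$ with the class of the \v{C}ech $2$-cocycle $\{-\overline{\psi}_{\alpha,\beta,\gamma}\}$, where $\overline{\psi}_{\alpha,\beta,\gamma}\in\Gamma(V_\alpha\times_XV_\beta\times_XV_\gamma,T^\sharp_{X/k})$ is the image under the banding $\varphi_{x_\alpha}$ of the automorphism $\psi_{\alpha\to\gamma}^{-1}\circ\psi_{\beta\to\gamma}\circ\psi_{\alpha\to\beta}$ of $x_\alpha$.

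The heart of the proof is to compute $\overline{\psi}_{\alpha,\beta,\gamma}$ explicitly and recognize $-o_{\alpha,\beta,\gamma}$. Transport all three isomorphisms into the $\Psi_{(A_\alpha,\varphi_\alpha)}$-picture. By Lemma~\ref{lemma: V^{-1}(iota-iota)} the transition $\Psi_{(A_\alpha,\varphi_\alpha)}\circ\Psi_{(A_\beta,\varphi_\beta)}^{-1}$ is the self-equivalence of $\mathbf{B}T^\sharp_{X/k}$ of Lemma~\ref{uniqueiso} sending the trivial torsor to $\Sigma_{\alpha,\beta}$, i.e. Baer sum with $\Sigma_{\alpha,\beta}$; moreover the additivity $V^{-1}(\iota_\gamma-\iota_\alpha)=V^{-1}(\iota_\beta-\iota_\alpha)+V^{-1}(\iota_\gamma-\iota_\beta)$ (Definition~\ref{sigma}) provides a canonical isomorphism $\Sigma_{\alpha,\gamma}\cong\Sigma_{\alpha,\beta}\wedge^{T^\sharp_{X/k}}\Sigma_{\beta,\gamma}$ compatible with the chosen trivializations. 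Thus $x_\beta\mapsto\Sigma_{\alpha,\beta}$, $x_\gamma\mapsto\Sigma_{\alpha,\gamma}$, and the composite becomes
\[
(\mathrm{trivial})\ \xrightarrow{\ \zeta_{\alpha,\beta}\ }\ \Sigma_{\alpha,\beta}\ \xrightarrow{\ \mathrm{id}\wedge\zeta_{\beta,\gamma}\ }\ \Sigma_{\alpha,\beta}\wedge^{T^\sharp_{X/k}}\Sigma_{\beta,\gamma}=\Sigma_{\alpha,\gamma}\ \xrightarrow{\ \zeta_{\alpha,\gamma}^{-1}\ }\ (\mathrm{trivial}).
\]
Since automorphisms of the trivial torsor compose by addition in $T^\sharp_{X/k}$, this equals $\zeta_{\alpha,\beta}+\zeta_{\beta,\gamma}-\zeta_{\alpha,\gamma}$ once the three terms are pulled back to $V_\alpha\times_XV_\beta\times_XV_\gamma\times_XU^{\perf}_{\alpha,\beta,\gamma}$ via $p_{124},p_{234},p_{134}$; that is, $\overline{\psi}_{\alpha,\beta,\gamma}=-o_{\alpha,\beta,\gamma}$, which matches the sign in Proposition~\ref{cechgerbe}. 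Hence $\{o_{\alpha,\beta,\gamma}\}$ represents $o_\HT$; and since $\overline{\psi}_{\alpha,\beta,\gamma}$ is by construction a section of $T^\sharp_{X/k}$ over $V_\alpha\times_XV_\beta\times_XV_\gamma$ and $\{-\overline{\psi}_{\alpha,\beta,\gamma}\}$ is a \v{C}ech $2$-cocycle there, the descent and cocycle assertions of the theorem are formal consequences.

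To pin down that the explicit cochain of Definition~\ref{o} lands where claimed, I would also verify the two assertions directly. For the descent claim, compute the \v{C}ech differential of $o_{\alpha,\beta,\gamma}$ along $(V_\alpha\times_XV_\beta\times_XV_\gamma)\times_XU^{\perf}_{\alpha,\beta,\gamma}\to V_\alpha\times_XV_\beta\times_XV_\gamma$: using $d\zeta_{\mu,\nu}=\sigma_{\mu,\nu}'$ term by term it becomes the pullback of $\sigma_{\alpha,\gamma}-\sigma_{\alpha,\beta}-\sigma_{\beta,\gamma}$, which vanishes because $\{\sigma_{\alpha,\beta}\}$ is a \v{C}ech $1$-cocycle for $\calZ^{(2)}$ (being the image of the additive cochain $\{V^{-1}(\iota_\beta-\iota_\alpha)\}$); hence $o_{\alpha,\beta,\gamma}$ descends to $V_\alpha\times_XV_\beta\times_XV_\gamma$. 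For the cocycle condition, substitute $o_{\alpha,\beta,\gamma}=\zeta_{\alpha,\gamma}-\zeta_{\alpha,\beta}-\zeta_{\beta,\gamma}$ into $o_{\beta,\gamma,\delta}-o_{\alpha,\gamma,\delta}+o_{\alpha,\beta,\delta}-o_{\alpha,\beta,\gamma}$; after pulling all summands back to a common iterated perfection fiber product over $V_\alpha\times_XV_\beta\times_XV_\gamma\times_XV_\delta$, the twelve $\zeta$-terms cancel in pairs. The final passage from the \v{C}ech class in $\check{\hol}^2(\{V_\alpha\to X\},T^\sharp_{X/k})$ to $\hol^2(X_{\mathrm{fpqc}},T^\sharp_{X/k})$ is the standard comparison map (\cite[\href{https://stacks.math.columbia.edu/tag/03F7}{Tag 03F7}]{stacks-project}).

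The main obstacle will be the bookkeeping in the second paragraph: transporting $\psi_{\beta\to\gamma}$ from the $\Psi_{(A_\beta,\varphi_\beta)}$-picture to the $\Psi_{(A_\alpha,\varphi_\alpha)}$-picture via Lemma~\ref{lemma: V^{-1}(iota-iota)}, setting up the Baer-sum identification $\Sigma_{\alpha,\gamma}\cong\Sigma_{\alpha,\beta}\wedge^{T^\sharp_{X/k}}\Sigma_{\beta,\gamma}$ compatibly with the cochains $\zeta$, and keeping every pullback $p_{ijk}^*$ and every sign straight, so that the composite automorphism becomes \emph{literally} the cochain of Definition~\ref{o}. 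Everything else is either a direct consequence of Proposition~\ref{cechgerbe} or a routine \v{C}ech computation.
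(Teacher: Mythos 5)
Your proposal is correct and follows essentially the same route as the paper: the descent of $o_{\alpha,\beta,\gamma}$ to $V_\alpha\times_XV_\beta\times_XV_\gamma$ is obtained from $d\zeta_{\mu,\nu}=\sigma'_{\mu,\nu}$ together with the cocycle property of $\{\sigma_{\alpha,\beta}\}$ (which is the paper's computation with $q_1^*-q_2^*$), and the identification with $o_{\HT}$ is Proposition~\ref{cechgerbe} applied to the sections $\iota_\alpha$ and the isomorphisms encoded by the $\zeta_{\alpha,\beta}$. Your second paragraph merely spells out, with the correct signs, the bookkeeping that the paper leaves implicit in the phrase ``follows from Lemma~\ref{cechgerbe}''.
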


\begin{proof}
    For the first claim, we only need to prove
    $$q_{1}^*(o_{\alpha,\beta,\gamma})-q_{2}^*(o_{\alpha,\beta,\gamma})\in \Gamma\left(V_{\alpha}\times_XV_{\beta}\times_XV_{\gamma}\times_{X}{U}^{\perf}_{\alpha,\beta,\gamma}\times_{X}{U}^{\perf}_{\alpha,\beta,\gamma},T^{\sharp}_{X/k}\right)$$ is equal to $0$
    , where
    $$q_1:V_{\alpha}\times_{X}V_{\beta}\times_XV_{\gamma}\times_{X}{U}^{\perf}_{\alpha,\beta,\gamma}\times_{X}{U}^{\perf}_{\alpha,\beta,\gamma}\to V_{\alpha}\times_{X}V_{\beta}\times_XV_{\gamma}\times_{X}{U}_{\alpha,\beta,\gamma}^{\perf}$$
    is the projection to the first four factors and
    $$q_2:V_{\alpha}\times_{X}V_{\beta}\times_XV_{\gamma}\times_{X}{U}^{\perf}_{\alpha,\beta,\gamma}\times_{X}{U}^{\perf}_{\alpha,\beta,\gamma}\to V_{\alpha}\times_{X}V_{\beta}\times_XV_{\gamma}\times_{X}{U}_{\alpha,\beta,\gamma}^{\perf}$$
    is the projection forgetting the fourth factor.
    Note that
    $$p_{124}^*(\zeta_{\alpha,\beta})-p_{123}^*(\zeta_{\alpha,\beta})\in \Gamma\left(\left(V_{\alpha}\times_XV_{\beta}\right)\times_X{U}_{\alpha,\beta}^{\perf}\times_X{U}_{\alpha,\beta}^{\perf}\right)$$
    is equal to $\sigma_{\alpha,\beta}'$. Thus by definition of $\zeta_{\alpha,\beta}$,
    $$q^*_2p^*_{124}(\zeta_{\alpha,\beta})-q^*_1p^*_{124}(\zeta_{\alpha,\beta})$$ is the pull back of $\sigma_{\alpha,\beta}$
    along the projection
    $$P:V_{\alpha}\times_{X}V_{\beta}\times_XV_{\gamma}\times_{X}{U}^{\perf}_{\alpha,\beta,\gamma}\times_{X}{U}^{\perf}_{\alpha,\beta,\gamma}\to {U}^{\perf}_{\alpha,\beta,\gamma}\times_{X}{U}^{\perf}_{\alpha,\beta,\gamma}\to {U}^{\perf}_{\alpha,\beta}\times_{X}{U}^{\perf}_{\alpha,\beta}.$$
    Similarly,
    $$q^*_2p^*_{234}(\zeta_{\beta,\gamma})-q^*_1p^*_{234}(\zeta_{\beta,\gamma})$$
    is equal to the pull back of $\sigma_{\beta,\gamma}$ along the projection $P$;
    $$q^*_2p^*_{134}(\zeta_{\alpha,\gamma})-q^*_1p^*_{134}(\zeta_{\alpha,\gamma})$$
    is equal to the pull back of $\sigma_{\beta,\gamma}$ along the projection $P$. The claim follows from the cocycle condition of $\sigma_{\text{-},\text{-}}$.

    The second claim follows from Lemma \ref{cechgerbe}.
\end{proof}

\begin{proof}[Proof of Theorem \ref{theo:compatible oHT}]
    Recall the class $o_{\HT}'$ is equal to the image of $\gamma(\widetilde o_{F})$ under the coboundary map
    \[\hol^1(X_{\mathrm{fpqc}},\calZ^{(2)})\to \hol^2(X_{\mathrm{fpqc}},T^\sharp_{X/k}).\]

    The class $\gamma(\widetilde o_{F})$ is represented by the class
    \[\{\sigma_{\alpha,\beta}\}\]
    as a \v Cech class in $\check\hol^{1}(\{V_\alpha\to X\},\calZ^{(2)})$ (cf. Lemma \ref{lemma: oF tilde rep by cech} and the argument after its proof). And the theorem follows from Theorem \ref{theo: calculate class of HT}.
\end{proof}

\begin{theo}\label{theo: section X^HT to frob liftting}
    Let $X/k$ be a smooth variety. There exists a canonical map from the set of equivalent classes of trivializations of $X^\HT$ to the set of isomorphic classes of Frobenius liftings of $X$ over $\W_2(k)$.
\end{theo}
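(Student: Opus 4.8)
The plan is to package the previous computations into a natural transformation of torsors and take global sections. Recall from Theorem \ref{theo:compatible oHT} that the class $o_{\HT}\in\hol^2(X_{\mathrm{fpqc}},T^\sharp_{X/k})$ is the image of $\widetilde{o}_F\in\hol^1\bigl(X_{\Zar},\W(\calO_X)/F(\W(\calO_X))\otimes_{\calO_X}T_{X/k}\bigr)$ under the composite of $\gamma$, the \v{C}ech--derived identifications, and the coboundary map $\hol^1(X_{\mathrm{fpqc}},\calZ^{(2)})\to\hol^2(X_{\mathrm{fpqc}},T^\sharp_{X/k})$. Each of these arrows is an \emph{isomorphism} at the level of $\hol^1$: indeed $\gamma$ is an isomorphism (Corollary \ref{cor:explicit gamma}), the \v{C}ech--derived maps are isomorphisms, and the coboundary is an isomorphism because $R\nu_*T^{\sharp,(1)}=0$ by Lemma \ref{lemma: T^sharp on X^perf times_X X^perf}. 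Therefore the whole composite underlies an isomorphism of \emph{stacks of torsors} (not merely of cohomology groups): the stack of splittings of the square-zero extension $\W(\calO_X)/p\to\calO_X$ maps, functorially in open sets $U\subseteq X$, to the stack of trivializations of $X^{\HT}$ — this is essentially the content already unwound in Section \ref{subsectioncalculation}, where a splitting is encoded by the local data $\iota_\alpha$ and the comparison $V^{-1}(\iota_\beta-\iota_\alpha)$ reproduces $\sigma_{\alpha,\beta}$.

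First I would make precise the target map. Composing with Theorem \ref{theo: splitting W2=lifting}(1), a retraction of $\W_2(\calO_X)/p\to\calO_X$ produces an isomorphism class of Frobenius liftings, and this is compatible with the $T_{X/k}\otimes_{\calO_X}F_*\calO_X/\calO_X$-torsor structures. So it suffices to produce a canonical map from equivalence classes of trivializations of $X^{\HT}$ to retractions of $\W_2(\calO_X)/p\to\calO_X$. For this I would use the natural surjection of square-zero extensions in the left column of diagram \eqref{eq:basic obstruction sequence in H^1}, i.e.\ the projection $\W(\calO_X)/p\to\W_2(\calO_X)/p$ inducing $\W(\calO_X)/F(\W(\calO_X))\to F_*\calO_X/\calO_X$; a splitting of $\W(\calO_X)/p\to\calO_X$ pushes forward to a splitting of $\W_2(\calO_X)/p\to\calO_X$. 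Combining: a trivialization of $X^{\HT}$ gives (via the inverse of the stack isomorphism above) a splitting of $\W(\calO_X)/p\to\calO_X$, hence a splitting of $\W_2(\calO_X)/p\to\calO_X$, hence an isomorphism class of Frobenius liftings. Two equivalent trivializations differ by an element of $\hol^0(X_{\mathrm{fpqc}},T^\sharp_{X/k})=\hol^0\bigl(X_{\Zar},\W(\calO_X)/F(\W(\calO_X))\otimes T_{X/k}\bigr)$, which maps to $\hol^0(X_{\Zar},F_*\calO_X/\calO_X\otimes T_{X/k})$, so the induced Frobenius liftings differ by an element of the structure group; thus the assignment descends to \emph{equivalence} classes of trivializations on the left and \emph{isomorphism} classes of Frobenius liftings on the right, and is compatible with the torsor actions (equivariant along $\W(\calO_X)/F(\W(\calO_X))\otimes T_{X/k}\to F_*\calO_X/\calO_X\otimes T_{X/k}$).

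The verification I expect to be most delicate is checking that the stack isomorphism built from $\gamma$, the \v{C}ech identifications, and the coboundary map is genuinely a morphism of \emph{stacks of torsors} over $X_{\Zar}$ — i.e.\ functorial in $U$ and sending the trivial torsor of splittings to the trivialization $\Psi_{(A_\alpha,\phi_\alpha)}$ — and not merely an abstract bijection on $\hol^1$. Here I would lean on Lemma \ref{isosection}, Lemma \ref{lemma: V^{-1}(iota-iota)} and Lemma \ref{lemma: oF tilde rep by cech}: these already identify, locally on the chosen cover, a splitting $\{\iota_\alpha\}$ with the trivialization data and the transition cocycles $\sigma_{\alpha,\beta}$, so functoriality and the torsor compatibility follow by rerunning those local computations over a variable affine open and checking independence of the auxiliary choices (the Frobenius liftings $(A_\alpha,\phi_\alpha)$ and the gluings $I_{\alpha,\beta}$), which is exactly the kind of bookkeeping carried out in the proof of Theorem \ref{theo: calculate class of HT}. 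Once this is in place the statement follows formally by taking $\hol^0$, i.e.\ global sections of the stacks of torsors, and composing with Theorem \ref{theo: splitting W2=lifting}(1).
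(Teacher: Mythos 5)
Your proposal is correct and follows essentially the same route as the paper: identify trivializations of $X^{\HT}$ with splittings of $\W(\calO_X)/p\to\calO_X$, push forward along $\W(\calO_X)/p\to\W_2(\calO_X)/p$, and apply Theorem \ref{theo: splitting W2=lifting}(1). The only difference is that the paper obtains the first identification directly from the definition of $X^{\HT}$ (its sections over affines are exactly such splittings, cf.\ Lemma \ref{dercalculation}), whereas you reconstruct it through the cohomological torsor comparison of Section \ref{subsectioncalculation} — a valid but longer detour.
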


\begin{proof}
    By the definition of $X^\HT$, the set of equivalent classes of trivializations of $X^\HT$ is equal to the set of splittings of the surjection
    \[\W(\calO_X)/p\to \calO_X.\]
    By Theorem \ref{theo: splitting W2=lifting}, the set of isomorphic classes of Frobenius liftings of $X$ over $\W_2(k)$ is equal to the set of splittings of the surjection
    \[\W_2(\calO_X)/p \to \calO_X.\]
    Hence, composition with the natural projection $\W(\calO_X)\to \W_2(\calO_X)$ provides the needed canonical map.
\end{proof}

\section{The weakly $p$-nilpotent Higgs bundles and weakly $p$-nilpotent Hodge--Tate stacks}

In \cite{Ogus_2007}, if we are given a $\W_2(k)$-smooth lifting $\widetilde{X}$ of $X$ (may not has a Frobenius lifting), there exists a non-abelian Hodge theory of so called $p$-nilpotent Higgs bundles. 
It is nature to ask that how to identify those $p$-nilpotent Higgs bundles with certain Hodge--Tate crystals.

We will consider the $T_{X/k}\otimes\alpha_{a}$-gerbe $X_{p-\wnil}^{\HT}$ defined by the obstruction class $o_{F,\mathrm{fpqc}}$ and identify the category of vector bundles with a full subcategory of Hodge--Tate crystals on $(X/\W(k))_{\Prism}$.

\begin{defi}[\ref{defi: weakly p nil HT}]
    Let $X/k$ be a smooth variety, define the \emph{ weakly $p$-nilpotent Hodge--Tate stack $X^{\HT}_{p-\wnil}$ of $X$} as the push-forward of $X^\HT$ by the canonical surjection
    \[ T^{\sharp}_{X/k}\to T_{X/k}\otimes\alpha_p, \]
    which is a $T_{X/k}\otimes \alpha_p$-gerbe.
\end{defi}

By definition, we can rewrite Theorem \ref{theo:main} as the following.

\begin{theo}\label{theo:obs class of HT pnil}
    The obstruction class of $X^{\HT}_{p-\wnil}$ is equal to $o_{F,\mathrm{fpqc}}(X)$.
\end{theo}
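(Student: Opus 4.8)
The plan is to deduce Theorem \ref{theo:obs class of HT pnil} from Theorem \ref{theo:main} by combining it with the general principle that forming the push-forward of a banded gerbe along a homomorphism of bands transports the obstruction class along the induced map on degree-two cohomology. Write $g\colon T^{\sharp}_{X/k}\to T_{X/k}\otimes\alpha_p$ for the band homomorphism of Definition \ref{defi: weakly p nil HT} (the map $\mathds{G}_a^{\sharp}\to\alpha_p$ tensored with $T_{X/k}$), and let $g_{*}\colon\hol^{2}(X_{\mathrm{fpqc}},T^{\sharp}_{X/k})\to\hol^{2}(X_{\mathrm{fpqc}},T_{X/k}\otimes\alpha_p)$ be the induced homomorphism. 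The first step is to check that $o(X^{\HT}_{p-\wnil})=g_{*}(o_{\HT})$. The quickest route is abstract: a gerbe banded by an abelian sheaf $\A$ is a $\mathbf{B}\A$-torsor, $g$ induces $\mathbf{B}g\colon\mathbf{B}T^{\sharp}_{X/k}\to\mathbf{B}(T_{X/k}\otimes\alpha_p)$, and the coboundary $\hol^{1}(\X,\mathbf{B}\A)\to\hol^{2}(\X,\A)$ attached to $0\to\A\to 0\to\mathbf{B}\A\to 0$ is natural with respect to $g$; since $X^{\HT}_{p-\wnil}$ corresponds by construction to the image of the $\mathbf{B}T^{\sharp}_{X/k}$-torsor $X^{\HT}$ under $\mathbf{B}g$, its class is $g_{*}(o_{\HT})$. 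The same conclusion can be reached concretely from Proposition \ref{cechgerbe}: for a trivializing covering $\{U_\lambda\to X\}$ and local sections of $X^{\HT}$, the banded $2$-cocycle computing the class of the push-forward is obtained from the one computing the class of $X^{\HT}$ simply by applying $g$ to each term, since the banding isomorphisms of $X^{\HT}_{p-\wnil}$ are by definition those of $X^{\HT}$ post-composed with $g$.

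Granting this, Theorem \ref{theo:obs class of HT pnil} becomes exactly the assertion $g_{*}(o_{\HT})=o_{F,\mathrm{fpqc}}(X)$, and $g$ is precisely the homomorphism appearing in the Bhatt--Lurie conjecture; so the statement is literally Theorem \ref{theo:main}. For the record, that identity is obtained by applying $\hol^{1}(X_{\Zar},-)$ followed by the Grothendieck-spectral-sequence coboundary (Corollary \ref{coboundspec}, Lemma \ref{lemma: diagram change 2 to 1}) to the morphism of short exact sequences \eqref{eq:basic obstruction sequence in H^1} tensored with $T_{X/k}$: the obstruction $\widetilde{o}_F$ to splitting $\W(\calO_X)/p\to\calO_X$ maps in the upper route to $o_{\HT}$ by Theorem \ref{theo:compatible oHT} (which in turn rests on Theorem \ref{theo: calculate class of HT}), the obstruction $o_F$ to splitting $\W_2(\calO_X)/p\to\calO_X$ maps in the lower route to $o_{F,\mathrm{fpqc}}$ by definition (Theorem \ref{theo: splitting W2=lifting}), and the left-hand vertical of \eqref{eq:basic obstruction sequence in H^1}, after tensoring and passing through the identifications of Lemma \ref{isom torsor} and its $\alpha_p$-analogue, induces $g$; commutativity of the resulting square then gives $g_{*}(o_{\HT})=o_{F,\mathrm{fpqc}}$.

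For the present theorem there is no essentially new obstacle: it is a formal consequence of Theorem \ref{theo:main}. The genuine difficulty sits one level down, in Theorem \ref{theo:compatible oHT} and Theorem \ref{theo: calculate class of HT} — the explicit \v{C}ech computation of the Hodge--Tate gerbe's class, which relies on Drinfeld's presentation of $X^{\HT}$, the Bhatt--Lurie trivializations $\Psi_{(A_\alpha,\phi_\alpha)}$, and the careful tracking of the $2$-cocycle $\{o_{\alpha,\beta,\gamma}\}$ of Definition \ref{o} through the chain of identifications relating $\check{\hol}^{1}$ on $\{U_\alpha\to X\}$ to $\check{\hol}^{2}$ on $\{V_\alpha\to X\}$. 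The one point I would still want to pin down cleanly at this level is that the left column of \eqref{eq:basic obstruction sequence in H^1} is genuinely a model for $g$ — that is, that the identification $R^{1}\nu_{*}(T_{X/k}\otimes\alpha_p)\cong T_{X/k}\otimes_{\calO_X}F_{*}\calO_X/\calO_X$ implicit in the definition of $o_{F,\mathrm{fpqc}}$ is compatible, via $g_{*}$, with the identification $R^{1}\nu_{*}T^{\sharp}_{X/k}\cong T_{X/k}\otimes_{\calO_X}\W(\calO_X)/F(\W(\calO_X))$ of Lemma \ref{isom torsor}; this is again a direct but slightly fiddly diagram chase using the long exact sequences attached to $0\to\mathds{G}_a^{\sharp}\to\W(\calO_X)\xrightarrow{F}\W(\calO_X)\to 0$ and its $\W_2$-truncation.
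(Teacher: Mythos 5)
Your proposal is correct and matches the paper's treatment: the paper offers no separate proof, simply declaring the theorem to be a rewriting of Theorem \ref{theo:main} via Definition \ref{defi: weakly p nil HT}, and your argument fills in exactly the two ingredients that rewriting implicitly uses — the standard fact that pushing forward a banded gerbe along $g\colon T^{\sharp}_{X/k}\to T_{X/k}\otimes\alpha_p$ sends its class to $g_{*}(o_{\HT})$, and the content of Theorem \ref{theo:main} itself as assembled from Theorem \ref{theo:compatible oHT}, Theorem \ref{theo: splitting W2=lifting}, and the commutativity of the spectral-sequence square built from \eqref{eq:basic obstruction sequence in H^1}. The compatibility you flag at the end (that the left column of \eqref{eq:basic obstruction sequence in H^1} really models $g_{*}$ under the two edge-map identifications) is indeed the one point the paper leaves implicit, and your proposed diagram chase with the two Artin--Schreier-type sequences is the right way to close it.
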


Now we assume that $X'$ is a smooth $\delta$-lifting of $X$ over $\W(k)$. By \cite{wang2024prismaticcrystalssmoothschemes}, $X'$ provides an equivalence between the category $\Vect\left((X/W)_\Prism,\overline\calO\right)$ and the category $\mathrm{Higgs}^{\nil}_X$.

Recall the following definition from \cite{lan2012inversecartiertransformexponential}.

\begin{defi}[$p$-nilpotent Higgs bundles]\label{defi: p nil Higgs}
    Let $(E,\theta)$ be a Higgs bundle on $X$. We call $(\calE,\theta)$ $p$-nilpotent if for any open subset $U\subset X$ and any $p$ sections
    \[\partial_1,\partial_2,\dots,\partial_p\in T_{X/k}(U),\]
    the action of $\partial_1\circ\partial_2\circ\dots\circ\partial_p$ on $\calE(U)$ is zero.
\end{defi}

However, our stack $X^{\HT}_{p-\wnil}$ classifies a class of Higgs bundles that satisfy a weaker condition than the $p$-nilpotency. By Theorem \ref{theo:obs class of HT pnil}, the existence of a Frobenius lifting of $X$ coincides with the triviality of $X^{\HT}_{p-\wnil}$. This explains why the Ogus--Vologodsky correspondence exists even when $X$ does not have a Frobenius lifting over $\W_2(k)$, as the class of $p$-nilpotent Higgs bundles is not big enough.

\begin{defi}[weakly $p$-nilpotent Higgs bundles]\label{defi: weak p nil}
    Let $(E,\theta)$ be a Higgs bundle on $X$. We call $(\calE,\theta)$ weakly $p$-nilpotent if for any open subset $U\subset X$ and any section $\partial\in T_{X/k}(U)$,
    the action of $\partial^p$ on $\calE(U)$ is zero.
\end{defi}

The vector bundles on $X^\HT_{p-\wnil}$ can be described by the following theorem.

\begin{theo}
    The pull-back along the natural morphism $X^\HT\to X^\HT_{p-\wnil}$ defines a fully faithful functor from the category $\Vect(X_{p-\wnil}^\HT)$ to the category $\Vect\left((X/W)_\Prism,\overline\calO\right)$. The essential image is identified with the full subcategory of weakly $p$-nilpotent Higgs bundles.
\end{theo}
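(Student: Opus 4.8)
The plan is to use the $\delta$-lifting $X'$ to trivialize both stacks, thereby reducing $\pi^{\ast}$ to a restriction functor between categories of group-scheme representations; then to compute that functor by hand; and finally to transport the answer along the equivalences $\Vect(X^{\HT})\simeq\Vect((X/W)_{\Prism},\overline{\calO})\simeq\mathrm{Higgs}^{\nil}_{X}$ that are already available. For the first step: the $\delta$-structure on $X'$ furnishes a ring homomorphism $\calO_{X}\to\W(\calO_{X})/p$ splitting the projection $\W(\calO_{X})/p\to\calO_{X}$, so by Theorem~\ref{definition HT gerbe} and Lemma~\ref{explicit} it gives a trivialization $\Psi_{(X',\delta)}\colon X^{\HT}\xrightarrow{\ \sim\ }\mathbf{B}T^{\sharp}_{X/k}$. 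Pushing forward along the banding surjection $q\colon T^{\sharp}_{X/k}\twoheadrightarrow T_{X/k}\otimes\alpha_{p}$ trivializes $X^{\HT}_{p-\wnil}\xrightarrow{\ \sim\ }\mathbf{B}(T_{X/k}\otimes\alpha_{p})$, compatibly with the canonical morphism $\pi\colon X^{\HT}\to X^{\HT}_{p-\wnil}$, which thereby becomes $\mathbf{B}q$. Hence $\pi^{\ast}$ is identified with restriction of representations along $q$, i.e. with a functor $\Vect(\mathbf{B}(T_{X/k}\otimes\alpha_{p}))\to\Vect(\mathbf{B}T^{\sharp}_{X/k})$, and the whole statement becomes a statement about this functor, to be transported to the prismatic side at the end.

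Next I would run the representation-theoretic computation. A vector bundle on $\mathbf{B}T^{\sharp}_{X/k}$ is a vector bundle $\calV$ on $X$ with a $T^{\sharp}_{X/k}$-coaction; choosing Zariski-locally a frame $\partial_{1},\dots,\partial_{n}$ of $T_{X/k}$ identifies $T^{\sharp}_{X/k}$ with $(\mathbb{G}_{a}^{\sharp})^{n}$, whose Hopf algebra is the divided-power polynomial ring $\calO_{X}[x_{1},\dots,x_{n}]_{\pd}$ with the $x_{i}$ primitive. Because the comultiplication $\Delta(x_{i}^{[m]})=\sum_{a+b=m}x_{i}^{[a]}\otimes x_{i}^{[b]}$ has all coefficients equal to $1$, coassociativity forces such a coaction to have the form $m\mapsto\sum_{\vec{a}}\theta_{1}^{a_{1}}\cdots\theta_{n}^{a_{n}}(m)\otimes x_{1}^{[a_{1}]}\cdots x_{n}^{[a_{n}]}$ for commuting, locally nilpotent $\calO_{X}$-linear operators $\theta_{i}$; these assemble into a Higgs field $\theta\colon\calV\to\calV\otimes\Omega^{1}_{X/k}$, recovering the identification (already recalled in the introduction) of $\Vect(\mathbf{B}T^{\sharp}_{X/k})$ with the category of nilpotent Higgs bundles. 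Likewise $T_{X/k}\otimes\alpha_{p}$ is locally $(\alpha_{p})^{n}$ with Hopf algebra $\calO_{X}[x_{1},\dots,x_{n}]/(x_{i}^{p})$, $x_{i}$ primitive, and its representations are exactly the same data subject to the extra constraint $\theta_{i}^{p}=0$; since the multinomial coefficient $p!/(a_{1}!\cdots a_{n}!)$ vanishes modulo $p$ unless some $a_{i}=p$, one has $(\sum f_{i}\theta_{i})^{p}=\sum f_{i}^{p}\theta_{i}^{p}$ for commuting operators, so this constraint is precisely that $\partial^{p}$ act by zero for every local section $\partial\in T_{X/k}$, i.e. $\Vect(\mathbf{B}(T_{X/k}\otimes\alpha_{p}))$ is the category of weakly $p$-nilpotent Higgs bundles of Definition~\ref{defi: weak p nil}. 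The map $q$ corresponds to the faithfully flat Hopf-algebra inclusion $\calO_{X}[x_{i}]/(x_{i}^{p})\hookrightarrow\calO_{X}[x_{i}]_{\pd}$, $x_{i}\mapsto x_{i}^{[1]}$; a short computation of the restricted coaction shows that restriction along $q$ carries a weakly $p$-nilpotent Higgs bundle to itself regarded as a nilpotent Higgs bundle, that a $T^{\sharp}_{X/k}$-equivariant map between two such is exactly an $\calO_{X}$-linear map intertwining the Higgs fields, and that the essential image is exactly the nilpotent Higgs bundles whose $p$-th powers vanish. Hence $\pi^{\ast}$ is fully faithful, with essential image the weakly $p$-nilpotent Higgs bundles.

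Finally I would transport this along $\Vect(\mathbf{B}T^{\sharp}_{X/k})\simeq\Vect(X^{\HT})\simeq\Vect((X/W)_{\Prism},\overline{\calO})$ (Bhatt--Lurie, via $\Psi_{(X',\delta)}$) and $\Vect((X/W)_{\Prism},\overline{\calO})\simeq\mathrm{Higgs}^{\nil}_{X}$ (Wang, via $X'$). Full faithfulness of the composite $\pi^{\ast}$ is then immediate from the previous paragraph, and for the essential image one uses that this composite equivalence is the Simpson correspondence of diagram~(\ref{eq:000}) — that over $X$ equipped with its $\delta$-structure it matches a Hodge--Tate crystal with the nilpotent Higgs bundle whose Higgs field is read off from the degree-one part of the $T^{\sharp}_{X/k}$-coaction — so that membership in the image of $\pi^{\ast}$ translates exactly into the weakly $p$-nilpotency condition, yielding the asserted identification of the essential image.

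The point requiring the most care is precisely this last compatibility: one must verify that Wang's functor from \cite{wang2024prismaticcrystalssmoothschemes}, composed with the Bhatt--Lurie comparison of Theorem~\ref{definition HT gerbe}, is the naive representation-theoretic correspondence used above, and not a Frobenius-twisted variant of it — such a twist would leave full faithfulness intact but would change the $p$-th power condition describing the essential image. Establishing this amounts to tracing the construction of \cite{wang2024prismaticcrystalssmoothschemes} through the stacky comparison of \cite{bhatt2022prismatization}; by contrast, the group-scheme computation and the identification of $\alpha_{p}$-representations with Higgs fields annihilated by their $p$-th powers are routine.
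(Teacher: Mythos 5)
Your proposal is correct and follows essentially the same route as the paper: trivialize $X^{\HT}$ and $X^{\HT}_{p-\wnil}$ via the $\delta$-lifting $X'$, identify $\pi^{\ast}$ with restriction of representations along $T^{\sharp}_{X/k}\twoheadrightarrow T_{X/k}\otimes\alpha_p$, and conclude by the local Hopf-algebra computation identifying $\alpha_p$-representations with pairs $(M,\theta)$ satisfying $\theta^p=0$. Your treatment is in fact somewhat more complete than the paper's (which states the $\alpha_p$-lemma and leaves the multinomial reduction from $\theta_i^p=0$ to the frame-independent condition, and the compatibility with Wang's functor, implicit), but the underlying argument is the same.
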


\begin{proof}
    The $\delta$-lifting $X'$ provides a trivialization of gerbes $\mathbf{B}T^\sharp_{X/k}\cong X^{\HT}$. The equivalence between nilpotent Higgs bundles and Hodge--Tate crystals are given by pull-back of vector bundles. Hence, it is equal to prove that pull-back along the surjection
    \[T^{\sharp}_{X/k}\to T_{X/k}\otimes\alpha_p\]
    provides a fully faithful functor and identifies the essential image with the category of $p$-nilpotent Higgs bundles.

    Now the question is locally on $X$, and we can assume that $T_{X/k}$ is free. Hence, the claim follows from the following lemma.
\end{proof}

\begin{lemma}
    Let $R$ be a ring of characteristic $p$. Then a representation of $\alpha_p$ whose underlying module is locally free of finite rank is identified with a module $M/R$ and an endomorphism $\theta$ of $M$ such that $\theta^p=0$.
\end{lemma}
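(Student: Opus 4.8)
The plan is to make the comodule structure underlying a representation of $\alpha_p$ completely explicit and read a nilpotent operator off it. Write $\alpha_p=\Spec\bigl(R[t]/(t^p)\bigr)$ for the group scheme over $R$ with comultiplication $t\mapsto t\otimes1+1\otimes t$ and counit $t\mapsto0$, so that a representation on an $R$-module $M$ is the same datum as an $R$-linear coaction $\rho\colon M\to M\otimes_R R[t]/(t^p)$ which is counital and coassociative. Since $R[t]/(t^p)=\bigoplus_{i=0}^{p-1}Rt^i$ is free over $R$, we may write $\rho(m)=\sum_{i=0}^{p-1}\theta_i(m)\,t^i$ for unique $\theta_0,\dots,\theta_{p-1}\in\mathrm{End}_R(M)$; I will show that $\rho\mapsto(M,\theta_1)$ is the desired correspondence, in two steps: first that all the $\theta_i$ are governed by $\theta:=\theta_1$ and that $\theta^p=0$, then that every such $\theta$ arises and that morphisms match.

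For the forward direction I would impose the two axioms. The counit axiom $(\mathrm{id}\otimes\varepsilon)\rho=\mathrm{id}_M$ gives at once $\theta_0=\mathrm{id}_M$. For coassociativity, write $t_1,t_2$ for the two variables of $R[t]/(t^p)\otimes_R R[t]/(t^p)$; then $(\rho\otimes\mathrm{id})\rho(m)$ has $t_1^at_2^b$-coefficient $\theta_a\theta_b(m)$, while $(\mathrm{id}\otimes\Delta)\rho(m)=\sum_i\theta_i(m)(t_1+t_2)^i$ has $t_1^at_2^b$-coefficient $\binom{a+b}{a}\theta_{a+b}(m)$, where $\theta_c:=0$ for $c\ge p$. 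Comparing coefficients yields $\theta_a\theta_b=\binom{a+b}{a}\theta_{a+b}$ for all $0\le a,b\le p-1$. Taking $a=1$ and inducting on $b$ gives $\theta_n=\tfrac1{n!}\theta_1^{\,n}$ for $0\le n\le p-1$ (valid since $n!$ is a unit in the $\mathbb{F}_p$-algebra $R$), so $\rho$ is determined by $\theta_1$; and $\theta_1^{\,p}=\theta_1\cdot\theta_1^{\,p-1}=(p-1)!\,\theta_1\theta_{p-1}=0$, the relation $\theta_1\theta_{p-1}=0$ being exactly the coefficient identity in the case $a+b=p\ge p$.

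For the converse, given $(M,\theta)$ with $\theta^p=0$ I would set $\rho(m):=\sum_{i=0}^{p-1}\tfrac1{i!}\theta^i(m)\,t^i$, the truncated exponential of $t\theta$. Counitality is immediate, and coassociativity reduces to two elementary checks against the formulas above: for $a+b\le p-1$ the needed identity $\tfrac{\theta^a}{a!}\tfrac{\theta^b}{b!}=\binom{a+b}{a}\tfrac{\theta^{a+b}}{(a+b)!}$ is just the binomial coefficient identity, and for $a+b\ge p$ both sides vanish because $\theta^{a+b}=\theta^p\theta^{a+b-p}=0$. Finally, an $R$-linear map $f\colon M\to N$ intertwines coactions iff $f\theta_i^M=\theta_i^N f$ for every $i$, which by $\theta_i=\theta_1^i/i!$ is equivalent to $f\theta_1^M=\theta_1^N f$; hence $\rho\mapsto(M,\theta_1)$ is fully faithful and essentially surjective, i.e.\ an equivalence (indeed an isomorphism) of categories, and it visibly carries representations with finite locally free underlying module to the finite locally free $(M,\theta)$.

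I do not expect a genuine obstacle here: the content is the coassociativity bookkeeping, whose one load-bearing output is the identity $\theta_1\theta_{p-1}=0$, which is precisely what forces $\theta_1^{\,p}=0$. Conceptually, what is happening is that the $R$-linear dual of the bialgebra $R[t]/(t^p)$ is the truncated divided power algebra $\bigoplus_{i=0}^{p-1}R\gamma_i$ with $\gamma_i\gamma_j=\binom{i+j}{i}\gamma_{i+j}$ (zero when $i+j\ge p$), which is isomorphic as an $R$-algebra to $R[x]/(x^p)$ via $x\mapsto\gamma_1$; a representation of $\alpha_p$ on any $R$-module (finite freeness of $R[t]/(t^p)$ over $R$ makes the restriction to finite locally free modules inessential) is exactly a module over this dual algebra, that is, an $R$-module equipped with an endomorphism whose $p$-th power vanishes. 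Should a shorter write-up be preferred, I would run this dual-algebra argument in place of the explicit coefficient comparison.
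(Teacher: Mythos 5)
Your proposal is correct and follows essentially the same route as the paper: expand the coaction in the monomial basis of $R[t]/(t^p)$, use counitality to get $\theta_0=\mathrm{id}$, use coassociativity to express the higher coefficients in terms of $\theta_1$, and conclude $\theta_1^{\,p}=0$. In fact your write-up is the more careful one — the coefficient identity really is $\theta_a\theta_b=\binom{a+b}{a}\theta_{a+b}$, hence $\theta_n=\theta_1^{\,n}/n!$, whereas the paper's proof drops the binomial coefficients and asserts $\psi_m\circ\psi_n=\psi_{m+n}$ and $\psi_n=\psi_1^{\,n}$; the conclusion is unaffected, and you additionally supply the converse construction and the check that morphisms match, which the paper leaves implicit.
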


\begin{proof}
    The Hopf algebra of $\alpha_p$ is equal to $R[T]/T^p$ and the comultiplication map sends $T$ to $T\otimes 1+1\otimes T$.

    Let $M$ be a Hopf module. Then the coaction is a $R$-linear morphism
    \[M\to M\otimes_R R[T]/T^p.\]
    We denote it by $\psi_0+\psi_1T+\dots +\psi_{p-1}T^{p-1}$. We have $\psi_0=id$, as the unit acts trivially on $M$. Also, we have
    \[\sum_{m,n=0}^{p-1}\psi_{m}\circ\psi_nT^m\otimes T^n=\sum_{j=0}^{p-1}\psi_{j}(T\otimes 1+1\otimes T)^j\]
    as morphisms from $M$ to $M\otimes_{R}(R[T]/T^p)\otimes_R(R[T]/T^p)$. This tells us that
    \[\psi_{m}\circ\psi_n=\psi_{m+n}\]
    where $\psi_{k}:=0$ if $k\geq p$. Thus, we have $\psi_{n}=\psi_{1}^n$ and $\psi_1^p=0$. We define our $\theta$ as $\psi_1$.
\end{proof}

\section*{Appendix: Comparing Bhatt--Lurie's construction and Wang's construction}

Throughout the appendix, we fix a perfect field $k$ of characteristic $p$ and a smooth variety $X/k$.

All results in this subsection can be generalized to general prisms $(A,I)$ and smooth schemes $X/\overline{A}$.

Let $(X'/\W(k),\phi')$ be a smooth Frobenius lifting of $X$ over $\W(k)$. Define $\Prism^{(1)}$ as the prismatic closure $\Prism_{X/X'\times X'}$. As both $(X',\phi')$ and $\Prism^{(1)}$ be objects in $\left(X/\W(k)\right)_{\Prism}$, there exists a diagram
\[\begin{tikzcd}
    X\ar{r}\arrow[dr,Rightarrow,shorten=5mm] & X^{\HT}\\
    \overline\Prism^{(1)}\ar{u}\ar{r} & X\ar{u}
\end{tikzcd}\]
The $2$-morphism corresponds to an element in $T^{\sharp}(\overline\Prism^{(1)})$ as $X^\HT$ is a gerbe. By the proof of \cite[Theorem 5.12]{bhatt2022prismatization}, this element can be described as the following.

For simplicity, let $X=\Spec(R)$ be affine, and let $X'=\Spf(R')$, view $\phi'$ as an endomorphism of $R'$. The embedding
\[\iota_0\text{ (resp. $\iota_1$)}:R'\to R'\widehat\otimes_{\W(k)}R',x\mapsto 1\otimes x\text{ (resp. $x\otimes 1$)}\]
induces a $\delta$-homomorphism $\iota_0:R'\to \Prism^{(1)}$ (resp. $\iota_1:R'\to \Prism^{(1)}$). For any $R$-algebra $S$ and $f\in\mathrm{Hom}_{R}(\overline\Prism^{(1)}, S)$, the adjoint property of Witt rings induces a $\delta$-homomorphism
\[\widetilde{f}:\Prism^{(1)}\to \W(S)\]
and $\widetilde{f}\circ\iota_{0}=\widetilde{f}\circ\iota_{1}$. Define
\[\widetilde\psi_f:R'\to \W(S)\]
as sending each $x\in R'$ to $f\left(\frac{\iota_0(x)-\iota_1(x)}{p}\right)$. Then $p\psi_f=0$ and hence $\widetilde\psi_f$ factors through $\psi_f:R\to \W(S)[p]$. Let $\Psi_f=V^{-1}\circ\psi_F$ which is a map from $R$ to $\ker(F:\W(S)\to\W(S))$. The proof of \cite[Theorem 5.12]{bhatt2022prismatization} shows that $\Psi_f$ a derivation.

By the calculation in \cite[Lemma 3.4.11]{bhatt2022absolute}, this is equal to the identification defined by \cite[Proposition 3.18]{wang2024prismaticcrystalssmoothschemes}. Hence, the Bhatt--Lurie's gerbe structure of $X^{\HT}$ is equal to the quotient of the groupoid $\overline\Prism^{(1)}\rightrightarrows X$ in this case. As a corollary, we have the following identification between Wang's construction and Bhatt--Lurie's construction.

\begin{theo}
    The equivalences between the category of nilpotent Higgs bundles and the category of Hodge--Tate crystals defined by Wang (\cite{wang2024prismaticcrystalssmoothschemes}) and Bhatt--Lurie (\cite[Corollary 6.6]{bhatt2022prismatization}).
\end{theo}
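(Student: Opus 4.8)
The plan is to deduce the statement from the identification of the Hodge--Tate gerbe $X^{\HT}$ with $\mathbf{B}T^\sharp_{X/k}$ computed above in the appendix, by observing that both equivalences are built from this identification by the same second step. Indeed, Bhatt--Lurie's equivalence (\cite[Corollary 6.6]{bhatt2022prismatization}) is obtained by first trivializing $X^{\HT}\cong \mathbf{B}T^\sharp_{X/k}$ using the $\delta$-lifting $X'$ (Theorem \ref{definition HT gerbe}) and then applying the standard identification of $\Vect(\mathbf{B}T^\sharp_{X/k})$ with $\mathrm{Higgs}^{\nil}_X$, whereby a vector bundle on the classifying stack of $T^\sharp_{X/k}$ is the same as a $T^\sharp_{X/k}$-representation, which unwinds to a nilpotent Higgs field. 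Wang's equivalence is, in the prismatic language, assembled from the very same two ingredients, the trivialization being produced from $X'$ through the prismatic envelope $\overline\Prism^{(1)}=\overline\Prism_{X/X'\times X'}$ together with its $T^\sharp_{X/k}$-torsor structure (\cite[Proposition 3.18]{wang2024prismaticcrystalssmoothschemes}). Since the passage from $\mathbf{B}T^\sharp_{X/k}$ to Higgs bundles is literally the same functor in both constructions, it suffices to show that the two trivializations of $X^{\HT}$ coincide.

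First I would reduce to the affine case $X=\Spec(R)$, $X'=\Spf(R')$ with $\phi'$ viewed as an endomorphism of $R'$; the general statement then follows by Zariski descent, since both trivializations and the resulting equivalences are compatible with restriction to affine opens --- this is the ``canonical'' clause of Theorem \ref{definition HT gerbe} on the one side and part of the construction of \cite{wang2024prismaticcrystalssmoothschemes} on the other. In the affine case, the appendix has already matched Bhatt--Lurie's gerbe structure with the quotient of the groupoid $\overline\Prism^{(1)}\rightrightarrows X$: the $2$-morphism in the relevant square is the derivation $\Psi_f=V^{-1}\circ\psi_f\colon R\to\ker(F\colon\W(S)\to\W(S))$ attached to $f\in\mathrm{Hom}_R(\overline\Prism^{(1)},S)$, and by \cite[Lemma 3.4.11]{bhatt2022absolute} this agrees with the derivation defining the torsor structure of \cite[Proposition 3.18]{wang2024prismaticcrystalssmoothschemes}. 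Hence the $T^\sharp_{X/k}$-torsor on $\overline\Prism^{(1)}$ extracted from the Bhatt--Lurie gerbe and the one used by Wang are the same, so the induced trivializations $X^{\HT}\cong\mathbf{B}T^\sharp_{X/k}$ agree.

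Granting this, pulling a vector bundle on $X^{\HT}$ back along $\overline\Prism^{(1)}\to X$ --- equivalently, transporting it through the common trivialization and reading off the associated $T^\sharp_{X/k}$-representation --- yields the same descent datum in both theories, and the resulting Higgs field is the same; therefore the two equivalences $\Vect\bigl((X/W)_\Prism,\overline\calO\bigr)\simeq\mathrm{Higgs}^{\nil}_X$ coincide. The main obstacle is the bookkeeping of the previous paragraph: one must verify that the explicit $T^\sharp_{X/k}$-torsor on $\overline\Prism^{(1)}$ obtained from the proof of \cite[Theorem 5.12]{bhatt2022prismatization} via the $V^{-1}\circ\psi_f$ formula matches Wang's torsor exactly, tracking the Frobenius-twist and normalization conventions hidden in $V^{-1}$, in $\psi_f$, and in the $\delta$-structures on both sides against \cite[Lemma 3.4.11]{bhatt2022absolute}. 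Once that comparison is secured, the rest is formal.
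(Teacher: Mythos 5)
Your proposal follows essentially the same route as the paper: the appendix's own argument consists precisely of computing the $2$-morphism as $\Psi_f=V^{-1}\circ\psi_f$, matching it with Wang's \cite[Proposition 3.18]{wang2024prismaticcrystalssmoothschemes} via \cite[Lemma 3.4.11]{bhatt2022absolute}, and concluding that the two trivializations of $X^{\HT}$ (hence the two equivalences with nilpotent Higgs bundles) coincide. Your additional remarks on the affine reduction and on the common second step through $\Vect(\mathbf{B}T^\sharp_{X/k})$ only make explicit what the paper leaves implicit.
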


\bibliographystyle{alpha}
\bibliography{ref}

\end{document}